\date{\empty}
\numberwithin{equation}{section} \theoremstyle{plain}
\newtheorem*{thm*}{Main Theorem}
\newtheorem{theorem}{Theorem}[section]
\newtheorem{corollary}[theorem]{Corollary}
\newtheorem*{corollary*}{Corollary}
\newtheorem*{claim*}{Claim}
\newtheorem{lemma}[theorem]{Lemma}
\newtheorem*{lemma*}{Lemma}
\newtheorem{proposition}[theorem]{Proposition}
\newtheorem*{proposition*}{Proposition}
\newtheorem{remark}[theorem]{Remark}
\newtheorem*{remark*}{Remark}
\newtheorem{example}[theorem]{Example}
\newtheorem*{example*}{Example}
\newtheorem*{question*}{Question}
\newtheorem{definition}[theorem]{Definition}
\newtheorem*{definition*}{Definition}
\newtheorem*{acknowledgements*}{ACKNOWLEDGEMENTS}
\newcommand{\core}[1]{#1^{\tiny{\textcircled{\tiny\#}}}}
\newcommand{\cored}[1]{#1_{\tiny{\textcircled{\tiny\#}}}}
\newcommand{\pn}{\overline{p}}
\begin{document}
\begin{center}
{\large  \bf EP elements in rings with involution}\\
\vspace{0.8cm} {\small \bf Sanzhang  Xu and Jianlong Chen}
\footnote{ E-mail: xusanzhang5222@126.com.
Corresponding author,
E-mail: jlchen@seu.edu.cn
}

\vspace{0.6cm} {\rm Department of Mathematics, Southeast University, Nanjing 210096, China}

\vspace{0.8cm} {\small \bf Julio Ben\'{\i}tez}
\footnote{E-mail: jbenitez@mat.upv.es}

\vspace{0.6cm} {\rm Universidad Polit\'{e}cnica de Valencia, Instituto de Matem\'{a}tica Multidisciplinar, Valencia, 46022,
Spain}
\end{center}

\bigskip

{ \bf  Abstract:}  \leftskip0truemm\rightskip0truemm Let $R$ be a unital ring with involution.
We first show that the EP elements in $R$ can be characterized by three equations.
Namely, let $a\in R$, then
$a$ is EP if and only if there exists $x\in R$ such that $(xa)^{\ast}=xa$, $xa^{2}=a$ and $ax^{2}=x.$
Any EP element in $R$ is core invertible and Moore-Penrose invertible.
We give more equivalent conditions for a core (Moore-Penrose) invertible element to be an EP element.
Finally, any EP element is characterized in terms of the $n$-EP property, which is a generalization of the bi-EP property.

{ \textbf{Key words:}}  Core inverse, EP, bi-EP, $n$-EP.

{ \textbf{AMS subject classifications:}}  15A09, 16W10, 16U80.
 \bigskip

%%%%%%%%%%%%%%%%%%%%%%%%%%%%%%%%%%%%%%%%%%%%%%%%%%%%%%%%%%%%%%%%%%%%%
%%%%%%%%%%%%%%%%%%%%%    Section 1   %%%%%%%%%%%%%%%%%%%%%%%%%%%%
%%%%%%%%%%%%%%%%%%%%%%%%%%%%%%%%%%%%%%%%%%%%%%%%%%%%%%%%%%%%%%%%%%%%%
\section { \bf Introduction}

Throughout this paper, $R$ will denote a unital ring with involution, i.e.,
a ring with a mapping $a\mapsto a^*$
satisfying $(a^*)^*=a$, $(ab)^*=b^*a^*$ and $(a+b)^*=a^*+b^*$, for all $a,b\in R$.
The notion of core inverse for a complex matrix was introduced by Baksalary and Trenkler \cite{BT}.
In \cite{RDD}, Raki\'{c} et al. generalized the core inverse of a complex matrix to the case of an element in $R$.
More precisely, let $a,x\in R$, if
$$
axa=a,~xR=aR\quad\text{and}\quad Rx=Ra^{\ast},
$$
then $x$ is called a {\em core inverse} of $a$. If such an element $x$ exists, then it is
unique and denoted by $\core{a}$. The set of all core invertible elements in $R$ will be
denoted by $\core{R}$.
Also, in \cite{RDD} the authors defined a related inner inverse in a ring with an involution.
If $a \in R$, then $x \in R$ is called a {\em dual core inverse} of $a$ if
$$
a x a = a, \ x R = a^* R \quad \text{and} \quad R x = Ra.
$$
If such an element $x$ exists, then it is
unique and denoted by $\cored{a}$. The set of all dual core invertible elements in $R$ will be
denoted by $\cored{R}$.
It is elemental to prove that $a \in \core{R}$ if an only if $a^* \in \cored{R}$, and
in this case, one has $(\core{a})^* = \cored{(a^*)}$. This last observation permits
to get results concerning dual core inverses from the corresponding
results on core inverses.

Let $a,x\in R$. If
$$
axa=a, \quad xax=x, \quad (ax)^{\ast}=ax \quad\text{and}\quad (xa)^{\ast}=xa,
$$
then $x$ is called a {\em Moore-Penrose inverse} of $a$.
If such an element $x$ exists, then it is unique and denoted by $a^{\dagger}$.
The set of all Moore-Penrose invertible elements will be denoted by $R^{\dagger}$.
%An element $b\in R$ is an inner inverse of $a\in R$ if $aba=a$ holds. The set of all inner inverses of $a$ will be denoted by $a\{1\}$.

Let $a\in R$. It can be easily proved that the set of elements $x\in R$ such that
$$
axa=a, \quad xax=x\quad\text{and}\quad ax=xa
$$
is empty or a singleton. If this set is a singleton, its unique element is called the
{\em group inverse} of $a$ and denoted by $a^\#$.
The set of all group invertible elements will be denoted by $R^\#$.
The subset of $R$ composed of all invertible elements will be denoted by $R^{-1}$.

A matrix $A\in \mathbb{C}^{n\times n}$ is called an {\em EP} (range-Hermitian) matrix if $\mathcal{R}(A)=\mathcal{R}(A^{\ast})$, where
$\mathbb{C}^{n\times n}$ denotes the set of all $n\times n$ matrices over the field of complex numbers and $\mathcal{R}(A)$ stands for the range (column space)
of $A\in \mathbb{C}^{n\times n}$. This concept was first introduced by Schwerdtfeger in \cite{HS}.
An element $a\in R$ is said to be an {\em EP} element if $a\in R^{\dagger}\cap R^\#$ and $a^{\dagger}=a^\#$ \cite{H}. The set of all EP elements will be denoted by $R^{\mathrm{EP}}$.
Mosi\'{c} et al. in \cite[Theorem 2.1]{MDK} gave several equivalent conditions such that an element in $R$ to be an EP element.
Patr\'{i}cio and Puystjens in \cite[Proposition 2]{PP} proved that for a Moore-Penrose invertible element $a\in R$, $a\in R^{\mathrm{EP}}$ if and only if $aR=a^\ast R$. As for a Moore-Penrose invertible element $a\in R$, $a\in R^{\mathrm{EP}}$ if and only if $aa^{\dagger}=a^{\dagger}a$, thus we deduce that
$aa^{\dagger}=a^{\dagger}a$ if and only if $aR=a^\ast R$.
In \cite[Theorem 3.1]{RDD}, Raki\'{c} et al. investigated some equivalent conditions such that a (dual) core invertible element in $R$ to be an EP element. Also, they showed that
$R^{\dagger}\cap \core{R} = R^{\dagger}\cap R^\#$.
Motivated by \cite{BO, MDK, PP, RDD}, in this paper, we will give new equivalent characterizations such that an element in $R$ to be an EP element.

We first show that the EP elements in $R$ can be characterized by three equations.
That is, let $a\in R$, then
$a\in R^{\mathrm{EP}}$ if and only if there exists $x\in R$ such that $(xa)^{\ast}=xa$, $xa^{2}=a$ and $ax^{2}=x.$
In \cite{RDD}, Raki\'{c} et al. proved that $a\in R^{\dagger}$ if and only if there exists $x\in R$
such that $axa=a$, $xR=a^{\ast}R$ and $Rx=Ra^{\ast}$. Inspired by this result, we show that $a\in R^{\mathrm{EP}}$ if
and only if there exists $x\in R$ such that
$$
axa=a,~xR=aR\quad\text{and}\quad Rx^{\ast}=Ra.
$$
In \cite[Theorem 16]{BO}, for an operator $T\in L(X)$, where $X$ is a Banach space, Boasso proved that
for a Moore-Penrose invertible operator $T$,
$T$ is an EP operator if and only there exists an invertible operator
$P\in L(X)$ such that $T^{\dagger}=PT$.
We generalize this result to the ring case. Moreover, for $a\in R^{\dagger}$, we show that
$a\in R^{\mathrm{EP}}$ if and only if there exists a (left) invertible element $v$ such that $a^{\dagger}=va$.
Similarly, for $a\in \core{R}$, then $a\in R^{\mathrm{EP}}$ if and only if there exists
a (left) invertible element $s$ such that $\core{a}=sa$.

In \cite{RDD}, Raki\'{c} et al. proved that $a\in R^{\mathrm{EP}}$ if and only if
$a\in R^{\dagger}\cap R^\#$ with $a^{\dagger}=\core{a}$. Also, it is proved that
$a\in R^{\mathrm{EP}}$ if and only if $a\in \core{R}$ with $a^\#=\core{a}$.
In \cite[Theorem 2.1]{MD}, Mosi\'{c} and Djordjevi\'{c} proved that
$a\in R^{\mathrm{EP}}$ if and only if $a\in R^\#\cap R^{\dagger}$ with $a^{n}a^{\dagger}=a^{\dagger}a^{n}$ for all choices $n\geqslant 1$.
This result also can be found in \cite[Theorem 2.4]{C} by Chen.
Motivated by \cite{MD,RDD}, we will give more new equivalent conditions under which a core invertible element is an EP element.
And we define the concept of $n$-EP as a generalization of bi-EP. As a application, we will use $n$-EP property to give an equivalent characterization
of the EP elements in $R$.
%%%%%%%%%%%%%%%%%%%%%%%%%%%%%%%%%%%%%%%%%%%%%%%%%%%%%%%%%%%%%%%%%%%%%%%%%%%%%%%%%%%%%%%%%%%%%%%%%%%%%%%%%%%%%%%%%%%%%%%%%%%%%%%%%%%%%%%%%%%%%%%%%%%%%%%%%%%%%%%%%%%%%%%%%%%%%%%%%%%%
%%%%%%%%%%%%%%%%%%%%%%%%%%%%%%%%%%%%%%%%%%%%%%%%%%%%%%%%%%%%%%%%%%%%%%%%%%%%%%%%%%%%%%%%%%%%%%%%%%%%%%%%%%%%%%%%%%%%%%%%%%%%%%%%%%%%%%%%%%%%%%%%%%%%%%%%%%%%%%%%%%%%%%%%%%%%%%%%%%%%
%%%%%%%%%%%%%%%%%%%%%%%%%%%%%%%%%%%%%%%%%%%%%%%%%%%%%%%%%%%%%%%%%%%%%%%%%%%%%%%%%%%%%%%%%%%%%%%%%%%%%%%%%%%%%%%%%%%%%%%%%%%%%%%%%%%%%%%%%%%%%%%%%%%%%%%%%%%%%%%%%%%%%%%%%%%%%%%%%%%%
%%%%%%%%%%%%%%%%%%%%%%%%%%%%%%%%%%%%%%%%%%%%%%%%%%%%%%%%%%%%%%%%%%%%%%%%%%%%%%%%%%%%%%%%%%%%%%%%%%%%%%%%%%%%%%%%%%%%%%%%%%%%%%%%%%%%%%%%%%%%%%%%%%%%%%%%%%%%%%%%%%%%%%%%%%%%%%%%%%%%
\section { \bf New characterizations of EP elements by equations}

In this section, we first show that any EP element in $R$ can be characterized by three equations.
Let us begin with an auxiliary lemma.
%\begin{lemma}  \emph{\cite{HJ}} \label{group-aa-aa}
%Let $a\in R$ (here, $R$ is a ring, possibly without involution and without unity). Then $a\in R^\#$ if and only if $a\in a^{2}R\cap Ra^{2}$. Moreover,
%if $a=xa^{2}=a^{2}y$ for some $x,y\in R,$ then $a^\#=xay=x^{2}a=ay^{2}.$
%\end{lemma}

\begin{lemma} \emph{\cite[Theorem 7.3]{KP}} \label{group-aa-aab}
Let $a\in R$. Then $a\in R^{\mathrm{EP}}$ if and only if $a\in R^\#$ with $(aa^\#)^{\ast}=aa^\#$.
\end{lemma}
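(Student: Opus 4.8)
The plan is to exploit the single structural fact that makes this equivalence work: for a group invertible element one always has $aa^\# = a^\#a$, so the hypothesis $(aa^\#)^\ast = aa^\#$ controls \emph{both} of the idempotents $aa^\#$ and $a^\#a$ simultaneously. With that in hand, the two directions are short verifications, and the whole argument reduces to checking the Moore--Penrose axioms against the group inverse.

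For the forward implication, I would simply unpack the definition of EP. If $a\in R^{\mathrm{EP}}$, then by definition $a\in R^\#\cap R^\dagger$ with $a^\dagger=a^\#$. One of the defining Moore--Penrose equations gives $(aa^\dagger)^\ast=aa^\dagger$, and substituting $a^\dagger=a^\#$ yields $(aa^\#)^\ast=aa^\#$, which together with $a\in R^\#$ is exactly what is required.

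For the converse, assume $a\in R^\#$ and $(aa^\#)^\ast=aa^\#$. The strategy is to show that $x:=a^\#$ satisfies all four Penrose equations, whence $a\in R^\dagger$ and $a^\dagger=a^\#$, so $a$ is EP. The equations $axa=a$ and $xax=x$ are immediate from the definition of the group inverse. The equation $(ax)^\ast=ax$ is precisely the hypothesis. The remaining equation $(xa)^\ast=xa$ is where the commutativity $a^\#a=aa^\#$ enters: since $xa=a^\#a=aa^\#$, its Hermitian symmetry follows at once from the assumed symmetry of $aa^\#$. This closes the argument.

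I do not expect any genuine obstacle here; the content of the lemma is really the observation that, because $aa^\#$ and $a^\#a$ coincide, a single self-adjointness condition forces both projection-type idempotents to be self-adjoint, and hence upgrades the group inverse into a Moore--Penrose inverse that agrees with it. The only point requiring care is to invoke $a^\#a=aa^\#$ at the right moment rather than treating $(xa)^\ast=xa$ as an independent hypothesis.
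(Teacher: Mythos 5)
Your proof is correct. Note that the paper itself gives no argument for this lemma --- it is quoted as \cite[Theorem 7.3]{KP} --- so there is no internal proof to compare against; your direct verification is self-contained and sound relative to the paper's definitions. Both directions check out: the forward direction is immediate from the definition of EP ($a\in R^{\dagger}\cap R^{\#}$ with $a^{\dagger}=a^{\#}$) together with the Penrose equation $(aa^{\dagger})^{\ast}=aa^{\dagger}$, and in the converse you correctly identify the one nontrivial point, namely that the commutativity $aa^{\#}=a^{\#}a$ built into the definition of the group inverse lets the single hypothesis $(aa^{\#})^{\ast}=aa^{\#}$ deliver both symmetry conditions $(ax)^{\ast}=ax$ and $(xa)^{\ast}=xa$ for $x=a^{\#}$, after which uniqueness of the Moore--Penrose inverse gives $a^{\dagger}=a^{\#}$ and hence $a\in R^{\mathrm{EP}}$. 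This is the standard argument, and it is essentially what the cited source does.
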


It is well known that the group inverse of an element in a ring can be characterized by three equations and
the Moore-Penrose inverse of an element in a ring can be characterized by four equations.
In the following theorem, we show that an EP element in a ring can be described by three equations.
\begin{theorem} \label{equations-a}
Let $a\in R$. Then $a\in R^{\mathrm{EP}}$ if and only if there exists $x\in R$ such that
\begin{equation}\label{eqth22}
(xa)^{\ast}=xa,~~xa^{2}=a~~\text{and}~~ax^{2}=x.
\end{equation}
\end{theorem}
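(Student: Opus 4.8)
The plan is to prove both directions by reducing to Lemma \ref{group-aa-aab}, which characterizes EP elements as those $a\in R^\#$ for which $aa^\#$ is self-adjoint. The forward direction is routine: assuming $a\in R^{\mathrm{EP}}$, I would take $x=a^\#=a^{\dagger}$. Since $a$ is EP we have $a^\# a=aa^\#$, and by the lemma this projection is Hermitian, so $(xa)^{\ast}=(a^\# a)^{\ast}=a^\# a=xa$. The remaining two equations $xa^2=a$ and $ax^2=x$ then follow from the group-inverse axioms $aa^\#a=a$ and $a^\#aa^\#=a^\#$ together with the commutation $aa^\#=a^\#a$; these are short manipulations.

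The substance lies in the converse. Given $x$ satisfying \eqref{eqth22}, I must first recover a group inverse of $a$ and then check the Hermitian hypothesis of the lemma. The one genuinely nonobvious step---the main obstacle---is to identify the correct candidate for $a^\#$; I claim it is $b:=x^2a$. The plan is then a direct verification of the three group-inverse axioms for $b$, organized around the element $p:=xa$.

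First I would compute $ab$ and $ba$ and show both equal $p$: using $ax^2=x$ gives $ab=(ax^2)a=xa=p$, while using $xa^2=a$ gives $ba=x^2a^2=x(xa^2)=xa=p$. This simultaneously yields the commutation $ab=ba$. Next, $aba=pa=(xa)a=xa^2=a$ by the second equation, and $bab=pb=(xa)(x^2a)=x(ax^2)a=x^2a=b$ by the third equation. Hence $a\in R^\#$ with $a^\#=b$, and crucially $aa^\#=ab=xa=p$.

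The proof then closes immediately: by the first equation in \eqref{eqth22}, $(aa^\#)^{\ast}=(xa)^{\ast}=xa=aa^\#$, so $aa^\#$ is self-adjoint and Lemma \ref{group-aa-aab} gives $a\in R^{\mathrm{EP}}$. I expect every computation after the guess $b=x^2a$ to be a one-line associativity rearrangement, so the entire difficulty is concentrated in recognizing that the group inverse should be built as $x^2a$ rather than, say, $x$ itself; note that the two equations $xa^2=a$ and $ax^2=x$ are symmetric under swapping $a$ and $x$, and it is precisely the self-adjointness condition $(xa)^{\ast}=xa$ that breaks this symmetry and forces the EP conclusion.
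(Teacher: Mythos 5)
Your proof is correct and takes essentially the same route as the paper's: in the converse you identify the same candidate $a^\#=x^2a$, verify the three group-inverse axioms by the same one-line rearrangements (obtaining $aa^\#=xa$), and conclude from the Hermitian condition $(xa)^{\ast}=xa$ via Lemma \ref{group-aa-aab}, exactly as the paper does. The only cosmetic difference is in the forward direction, where the paper gets $(xa)^{\ast}=xa$ directly from the Moore--Penrose axiom $(a^{\dagger}a)^{\ast}=a^{\dagger}a$ rather than from the lemma.
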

\begin{proof}
Suppose $a\in R^{\mathrm{EP}}$. Let $x=a^{\dagger}=a^\#$, then $(xa)^{\ast}=(a^{\dagger}a)^{\ast}=a^{\dagger}a=xa,$
$xa^{2}=a^\#a^{2}=a$ and $ax^{2}=a(a^\#)^{2}=a^\#=x.$ Conversely, if there exists $x\in R$ such that
$(xa)^{\ast}=xa$, $xa^{2}=a$ and $ax^{2}=x,$ then $a(x^2a) = (ax^2)a = xa = x(xa^2) = (x^2a)a$,
$a(x^2a)a = (xa)a = xa^2 = a$, and
$(x^2a)a(x^2a) = (xa)(x^2a) = x(ax^2)a = x^2a$.
These three equalities prove that $a \in R^\#$, $a^\# = x^2a$, and $aa^\# = xa$. By
Lemma \ref{group-aa-aab}, we get $a \in R^{\rm EP}$.
\end{proof}

For an idempotent $p$ in a ring $R$,
every $a\in R$ can be written as
$$a = pap+pa(1-p)+(1-p)ap+(1-p)a(1-p)$$
or in the matrix form
$$
a = \left[ \begin{matrix}
a_{11} & a_{12} \\ a_{21} & a_{22}
\end{matrix} \right],
$$
where $a_{11}=pap$, $a_{12}=pa(1-p)$, $a_{21}=(1-p)ap$ and $a_{22}=(1-p)a(1-p)$.

Let us observe that $pRp$ and $(1-p)R(1-p)$ are rings whose unities are $p$ and
$1-p$, respectively. Also, we notice that if $p=p^{\ast}$, then the above matrix
representation preserves the involution. The term
{\em projection} will be reserved for a Hermitian idempotent.

Suppose in this paragraph that $a \in R$ is an EP element. If we denote
$p=aa^\dagger=a^\dag a$, since $ap=pa=a$ and $a^\dag p = p a^\dag = a^\dagger$, then
the matrix representations of $a$ and $a^\dagger$ with respect to the Hermitian idempotent $p$
are
\begin{equation}\label{mra}
a = \left[ \begin{array}{cc} a & 0 \\ 0 & 0 \end{array} \right] \quad \text{and} \quad
a^\dag = \left[ \begin{array}{cc} a^\dag & 0 \\ 0 & 0 \end{array} \right],
\end{equation}
respectively.

Recall that a ring $R$ is
{\em prime} if for any two elements $a$ and $b$ of $R$, $aRb = 0$ implies that either $a = 0$ or $b = 0$
and a ring $R$ is
{\em semiprime} if for any element $a$ in $R$, $aRa = 0$ implies that $a = 0$.

In next theorem, the set of elements $x \in R$ satisfying (\ref{eqth22})
is described.

\begin{theorem} \label{biao-b}
Let $a \in R$. If $a$ is EP, then $\{ x \in R: (xa)^* = xa, xa^2 =a, ax^2 = x \} =
\{a^\dag +aa^\dag y (1-aa^\dag): y \in R\}$. Moreover, if $R$ is a prime ring, then
$\{ x \in R: (xa)^* = xa, xa^2 =a, ax^2 = x \} = \{ a^\dag\}$ if and only if $a=0$ or
$a$ is invertible.
\end{theorem}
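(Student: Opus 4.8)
The plan is to prove the two statements of Theorem \ref{biao-b} in turn. For the first (characterizing the full solution set), I would start from the fact, established in the proof of Theorem \ref{equations-a}, that any $x$ satisfying (\ref{eqth22}) gives $a^\# = x^2 a$ and $aa^\# = xa$. Since $a$ is EP, we have $a^\dagger = a^\#$ and $p := aa^\dagger = a^\dagger a$ is a projection. My first goal is to show $a^\dagger$ itself lies in the set: this is the forward direction already verified in Theorem \ref{equations-a}, so $a^\dagger$ is a solution. Then I would show every solution $x$ differs from $a^\dagger$ by an element of the prescribed form $aa^\dagger y (1-aa^\dagger)$.

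\textbf{Containment $\supseteq$.} First I would verify directly that every $x = a^\dagger + p y (1-p)$ satisfies the three equations, exploiting $a = pa = ap$ (so $a(1-p) = 0$ and $(1-p)a = 0$) together with $a^\dagger = pa^\dagger = a^\dagger p$. The term $py(1-p)$ is ``strictly upper triangular'' relative to $p$, so $py(1-p)\cdot a = 0$ and $a \cdot py(1-p) = 0$; this makes $xa = a^\dagger a = p$ (Hermitian), $xa^2 = a^\dagger a^2 = a$, and the computation of $ax^2$ reduces to $a(a^\dagger)^2 = a^\dagger = x\cdot$(something), which I would check collapses correctly using $p y(1-p) p = 0$.

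\textbf{Containment $\subseteq$.} Given a solution $x$, I would pass to the matrix representation with respect to $p$, where $a = \left[\begin{smallmatrix} a & 0 \\ 0 & 0\end{smallmatrix}\right]$ and $a^\dagger = \left[\begin{smallmatrix} a^\dagger & 0 \\ 0 & 0\end{smallmatrix}\right]$ as in (\ref{mra}). Writing $x = \left[\begin{smallmatrix} x_{11} & x_{12} \\ x_{21} & x_{22}\end{smallmatrix}\right]$, I would translate the three equations $(xa)^* = xa$, $xa^2 = a$, $ax^2 = x$ into block equations. The equations $xa^2 = a$ and $(xa)^* = xa$ should pin down $x_{11} = a^\dagger$ and force $x_{21} = 0$ (the latter because $xa = \left[\begin{smallmatrix} x_{11}a & 0 \\ x_{21}a & 0\end{smallmatrix}\right]$ must be Hermitian, and $a$ is invertible in the corner ring $pRp$ with inverse $a^\dagger$). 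Finally $ax^2 = x$ read blockwise should force $x_{22} = 0$, leaving only $x_{12} = p x_{12}(1-p)$ free, which is exactly the form $aa^\dagger y(1-aa^\dagger)$.

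\textbf{For the second statement}, the solution set is the singleton $\{a^\dagger\}$ iff the free block $\{p y (1-p): y \in R\}$ is zero, i.e.\ iff $pR(1-p) = 0$. In a prime ring, $pR(1-p) = 0$ means $p\cdot R\cdot (1-p) = 0$, forcing $p = 0$ or $1-p = 0$ by primeness; these correspond respectively to $a = 0$ and to $p = 1$, i.e.\ $a$ invertible (since $aa^\dagger = a^\dagger a = 1$ makes $a^\dagger$ a two-sided inverse). Conversely, if $a = 0$ or $a$ is invertible, the free block collapses and the set is $\{a^\dagger\}$. \textbf{The main obstacle} I anticipate is the careful blockwise bookkeeping in the $\subseteq$ direction — in particular, verifying that the Hermitian condition on $xa$ genuinely kills $x_{21}$ and that $ax^2 = x$ kills $x_{22}$ rather than merely constraining them, which requires using invertibility of $a$ within $pRp$ at each step.
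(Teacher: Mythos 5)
Your proposal is correct and follows essentially the same route as the paper: the block decomposition with respect to the projection $p=aa^\dagger$ from (\ref{mra}), elimination of the blocks using invertibility of $a$ in $pRp$ (with inverse $a^\dagger$), direct verification of the reverse inclusion, and the primeness argument $pR(1-p)=0\Rightarrow p=0$ or $1-p=0$ for the singleton claim. The only (harmless) deviation is that you kill the $(2,1)$-block via the Hermitian condition $(xa)^{\ast}=xa$, whereas the paper gets $w=0$ directly from $xa^{2}=a$ (i.e.\ $wa^{2}=0$); both work.
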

\begin{proof}
Suppose $a$ is an EP element. We use the matrix representations
of $a$ and $a^\dag$ with respect to the projection $p=aa^\dagger$ given in (\ref{mra}).
Let $x=\left[\begin{smallmatrix} u & v \\ w & z \end{smallmatrix} \right]$ be the representation
of any $x\in\{ x \in R: (xa)^* = xa, xa^2 =a, ax^2 = x \}$ with respect to $p$.
From $xa^{2}=a$, we get
$$
\left[\begin{matrix} a & 0 \\ 0 & 0 \end{matrix} \right]
=\left[\begin{matrix} u & v \\ w & z \end{matrix} \right]
\left[\begin{matrix} a^{2} & 0 \\ 0 & 0 \end{matrix} \right]
=\left[\begin{matrix} ua^{2} & 0 \\ wa^{2} & 0 \end{matrix} \right].
$$
Since $a$ is EP and $aa^{\dagger}=p=a^{\dagger}a$, then $a$ is invertible in $pRp$ and its
inverse is $a^{\dagger}$. Hence from $a=ua^{2}$ and $0=wa^{2}$, we obtain $u=a^{\dagger}$
and $0=w$, respectively. Now, from $x=ax^{2}$ we have
$$
\left[\begin{matrix} a^{\dagger} & v \\ 0 & z \end{matrix} \right]
= \left[\begin{matrix} a & 0 \\ 0 & 0 \end{matrix} \right]
\left[\begin{matrix} a^{\dagger} & v \\ 0 & z \end{matrix} \right]x
=\left[\begin{matrix} p & av \\ 0 & 0 \end{matrix} \right]
\left[\begin{matrix} a^{\dagger} & v \\ 0 & z \end{matrix} \right]
=\left[\begin{matrix} a^{\dagger} & v+avz \\ 0 & 0 \end{matrix} \right],
$$
which implies $z=0$. Therefore,
$$
x=\left[\begin{matrix} a^{\dagger} & v \\ 0 & 0 \end{matrix} \right]
=a^{\dagger}+v,
$$
that is $\{ x \in R: (xa)^* = xa, xa^2 =a, ax^2 = x \}\subseteq\{a^{\dagger}+aa^{\dagger}y(1-aa^{\dagger}):y\in R\}$.

Let us prove the opposite inclusion. We have that $aa^{\dagger}=a^{\dagger}a$ since $a$ is EP. Let
$x=a^{\dagger}+aa^{\dagger}y(1-aa^{\dagger})$, then
\begin{equation*}
\begin{split}
xa=& [a^{\dagger}+aa^{\dagger}y(1-aa^{\dagger})]a=a^{\dagger}a \text{ is Hermitian},\\
xa^{2}=& [a^{\dagger}+aa^{\dagger}y(1-aa^{\dagger})]a^{2}=a^{\dagger}a^{2}=a,\\
ax^{2}=& a[a^{\dagger}+aa^{\dagger}y(1-aa^{\dagger})]^{2}
=[aa^{\dagger}+ay(1-aa^{\dagger})][a^{\dagger}+aa^{\dagger}y(1-aa^{\dagger})]\\
=&a^{\dagger}+aa^{\dagger}y(1-aa^{\dagger})=x.
\end{split}
\end{equation*}
Suppose that $R$ is a prime ring. If $a=0$, then
$\{ x \in R: (xa)^* = xa, xa^2 =a, ax^2 = x \}=\{0\}$. If $a$ is invertible,
then $\{ x \in R: (xa)^* = xa, xa^2 =a, ax^2 = x \} = \{a^{-1}\}$.
If $\{ x \in R: (xa)^* = xa, xa^2 =a, ax^2 = x \}$ is a singleton,
then $aa^\dag y(1-aa^\dag)=0$ for all $y \in R$, by
using that $R$ is prime, then $aa^\dag=0$ or $1-aa^\dagger=0$. The first
of the previous alternatives is equivalent to $a=0$ and the second one (since $a$ is EP)
is equivalent to the invertibility of $a$.
\end{proof}

We will also use the following notations: $aR=\{ax : x\in R\}$, $Ra=\{xa : x\in R\}$,
${}^{\circ}a = \{x\in R : xa=0\}$ and $a^{\circ}=\{x\in R : ax=0\}$
.
The following lemma will be useful in the sequel.
\begin{lemma}\emph{\cite[Lemma 8]{VN}} \label{annihilator}
Let $a, b\in R$. Then:
\begin{itemize}
\item[{\rm (1)}] $aR\subseteq bR$ implies ${}^{\circ}b\subseteq {}^{\circ}a$ and the converse
is valid whenever $b$ is regular;
\item[{\rm (2)}] $Ra\subseteq Rb$ implies $b^{\circ}\subseteq a^{\circ}$ and the converse
is valid whenever $b$ is regular.
\end{itemize}
\end{lemma}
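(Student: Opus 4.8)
The plan is to establish the four implications one at a time and to exploit the evident left--right symmetry: part (2) is literally part (1) transported to the opposite ring $R^{\mathrm{op}}$, where multiplication is reversed, the roles of left and right annihilators are interchanged, and $aR\subseteq bR$ becomes $Ra\subseteq Rb$. Thus I would prove part (1) carefully and then invoke this duality to read off part (2), spelling out the reversed computations only for completeness.

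For the forward implication of (1), I would first note that $a=a\cdot 1\in aR\subseteq bR$, so there is some $c\in R$ with $a=bc$. Then any $x\in{}^{\circ}b$ satisfies $xa=(xb)c=0$, whence $x\in{}^{\circ}a$; this yields ${}^{\circ}b\subseteq{}^{\circ}a$ with no hypothesis on $b$. Observe that regularity plays no role in this direction.

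The converse is where regularity enters, and it is the one step that needs an idea rather than bookkeeping. Assuming ${}^{\circ}b\subseteq{}^{\circ}a$ and choosing an inner inverse $b^-$ with $bb^-b=b$, the trick is to produce an explicit annihilator of $b$ and push it through the hypothesis: the element $1-bb^-$ satisfies $(1-bb^-)b=b-bb^-b=0$, so $1-bb^-\in{}^{\circ}b\subseteq{}^{\circ}a$. Hence $(1-bb^-)a=0$, that is $a=b(b^-a)\in bR$, and since $bR$ is a right ideal we conclude $aR\subseteq bR$.

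I expect the main (indeed the only) obstacle to be recognizing this converse construction; once one sees that regularity manufactures the witness $1-bb^-$ inside ${}^{\circ}b$, everything else is a one-line multiplication. For part (2) the mirror-image moves apply: the factorization $a=cb$ coming from $a\in Ra\subseteq Rb$ gives the forward implication, while the element $1-b^-b$, which satisfies $b(1-b^-b)=0$ and therefore lies in $b^{\circ}\subseteq a^{\circ}$, produces $a=(ab^-)b\in Rb$ and hence $Ra\subseteq Rb$ for the converse.
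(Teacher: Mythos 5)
Your proof is correct, and both directions of each part are handled cleanly: the forward implications follow from the factorizations $a=bc$ (resp.\ $a=cb$), and the converses use the regularity witness $1-bb^{-}\in{}^{\circ}b$ (resp.\ $1-b^{-}b\in b^{\circ}$) exactly as needed. Note that the paper itself offers no proof of this lemma --- it is quoted as Lemma 8 of von Neumann's paper \cite{VN} --- and your argument is precisely the standard one for that classical result, so there is nothing to reconcile.
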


\begin{theorem} \label{equations-b}
Let $a\in R$. Then the following are equivalent:
\begin{itemize}
\item[{\rm (1)}] $a\in R^{\mathrm{EP}}$;
\item[{\rm (2)}] there exists $x\in R$ such that $axa=a$, $xR=aR$ and $Rx^{\ast}=Ra$;
\item[{\rm (3)}] there exists $x\in R$ such that $axa=a$, $xR=aR$ and $Rx^{\ast}\subseteq Ra$;
\item[{\rm (4)}] there exists $x\in R$ such that $xax=x$, $xR=aR$ and $Rx^{\ast}=Ra$;
\item[{\rm (5)}] there exists $x\in R$ such that $xax=x$, $xR=aR$ and $Ra\subseteq Rx^{\ast}$;
\item[{\rm (6)}] there exists $x\in R$ such that $axa=a$, ${}^{\circ}x={}^{\circ} a$ and
$(x^{\ast})^{\circ}=a^{\circ}$;
\item[{\rm (7)}] there exists $x\in R$ such that $axa=a$, ${}^{\circ}x={}^{\circ}a$
and $a^{\circ}\subseteq (x^{\ast})^{\circ}$;
\item[{\rm (8)}] there exists $x\in R$ such that $xax=x$, ${}^{\circ}x={}^{\circ} a$
and $(x^{\ast})^{\circ}=a^{\circ}$;
\item[{\rm (9)}] there exists $x\in R$ such that $xax=x$, ${}^{\circ}x={}^{\circ}a$ and
$(x^{\ast})^{\circ}\subseteq a^{\circ}$.
\end{itemize}
\end{theorem}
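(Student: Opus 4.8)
The plan is to prove all nine conditions equivalent by running the two cycles $(1)\Rightarrow(2)\Rightarrow(3)\Rightarrow(1)$ and $(1)\Rightarrow(4)\Rightarrow(5)\Rightarrow(1)$, and then pairing each annihilator condition with its ideal counterpart, $(2)\Leftrightarrow(6)$, $(3)\Leftrightarrow(7)$, $(4)\Leftrightarrow(8)$, $(5)\Leftrightarrow(9)$, via Lemma~\ref{annihilator}. For $(1)\Rightarrow(2)$ and $(1)\Rightarrow(4)$ I would take $x=a^{\dagger}=a^{\#}$. Then $axa=a$, $xax=x$ and $xR=a^{\#}R=aR$ are immediate. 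The only point needing a short computation is $Rx^{\ast}=Ra$: since $a^{\dagger}a$ is Hermitian one has $(a^{\dagger})^{\ast}=(a^{\dagger})^{\ast}a^{\dagger}a\in Ra$, and from $a^{\ast}(a^{\dagger})^{\ast}=a^{\dagger}a$ together with $a(a^{\dagger}a)=a$ one gets $a=aa^{\ast}(a^{\dagger})^{\ast}\in R(a^{\dagger})^{\ast}$, so $R(a^{\dagger})^{\ast}=Ra$. The implications $(2)\Rightarrow(3)$ and $(4)\Rightarrow(5)$ are trivial, an ideal equality forcing the corresponding inclusion.

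Before invoking Lemma~\ref{annihilator} I would record the regularity facts its converse directions require. In $(2)$ and $(3)$ the relation $axa=a$ makes $a$ regular and $ax,xa$ idempotent with $(ax)R=aR$; since $xR=aR$ this puts $x\in(ax)R$ and $ax\in xR$, whence $x$ is regular too, and therefore $a^{\ast},x^{\ast}$ are regular. In $(4)$ and $(5)$ one argues symmetrically from $xax=x$: now $x$ is regular and $xR=aR$ forces $a$ regular. With $a,x,a^{\ast},x^{\ast}$ all regular, Lemma~\ref{annihilator}(1) gives $xR=aR\Leftrightarrow{}^{\circ}x={}^{\circ}a$, while Lemma~\ref{annihilator}(2) gives $Rx^{\ast}=Ra\Leftrightarrow(x^{\ast})^{\circ}=a^{\circ}$, $Rx^{\ast}\subseteq Ra\Leftrightarrow a^{\circ}\subseteq(x^{\ast})^{\circ}$ and $Ra\subseteq Rx^{\ast}\Leftrightarrow(x^{\ast})^{\circ}\subseteq a^{\circ}$, yielding the four pairings at once.

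The heart of the argument is the return $(3)\Rightarrow(1)$ and its mirror $(5)\Rightarrow(1)$. For $(3)$, taking stars of $xR=aR$ gives $Rx^{\ast}=Ra^{\ast}$, so $Rx^{\ast}\subseteq Ra$ becomes $aR\subseteq a^{\ast}R$; write $a=a^{\ast}u^{\ast}$, equivalently $a^{\ast}=ua$. On the other hand $x\in xR=aR$ lets me write $x=as$, and then $a=axa=a^{2}(sa)$, so $a\in a^{2}R$, say $a=a^{2}c$ and hence $a^{\ast}=c^{\ast}a^{\ast2}$. Combining, $a=a^{\ast}u^{\ast}=c^{\ast}a^{\ast2}u^{\ast}=c^{\ast}a^{\ast}(a^{\ast}u^{\ast})=c^{\ast}a^{\ast}a=c^{\ast}ua^{2}\in Ra^{2}$. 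Thus $a\in a^{2}R\cap Ra^{2}$, the standard criterion for $a\in R^{\#}$. Finally, with $e:=aa^{\#}=a^{\#}a$ one has $eR=aR$ and $Re=Ra$, so $aR\subseteq a^{\ast}R$ reads $eR\subseteq e^{\ast}R$ and its starred form $Ra^{\ast}\subseteq Ra$ reads $Re^{\ast}\subseteq Re$; two idempotents $f,g$ with $fR\subseteq gR$ and $Rg\subseteq Rf$ must coincide (since $gf=f$ and $gf=g$), so $e=e^{\ast}$, i.e. $aa^{\#}$ is Hermitian, and Lemma~\ref{group-aa-aab} gives $a\in R^{\mathrm{EP}}$. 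For $(5)$ I would run the mirror computation: $xax=x$ and $xR=aR$ give $a=xa^{2}\in Ra^{2}$, the hypothesis $Ra\subseteq Rx^{\ast}=Ra^{\ast}$ gives $a^{\ast}=aw$ and $a=w^{\ast}a^{\ast}$, and the same bracketing yields $a=aa^{\ast}x^{\ast}=a^{2}(wx^{\ast})\in a^{2}R$; group invertibility and the Hermitian-idempotent step then finish identically.

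The step I expect to be the main obstacle is exactly this production of group invertibility inside the return implications: isolating both one-sided factorizations $a\in a^{2}R$ and $a\in Ra^{2}$ requires interleaving the inner-inverse relation (here $x=as$, respectively $xax=x$) with the involutive hypothesis, and the collapsing move $a^{\ast}(a^{\ast}u^{\ast})=a^{\ast}a$, which relies on $a=a^{\ast}u^{\ast}$, is the non-obvious manoeuvre turning $a\in a^{2}R$ into $a\in Ra^{2}$. Once $a\in R^{\#}$ is in hand the idempotent lemma is short and is precisely what replaces the finite-dimensional ``equal rank'' argument. It is essential that $xR=aR$ supplies group invertibility, because regularity together with a bare one-sided inclusion $aR\subseteq a^{\ast}R$ does \emph{not} force the EP property (the unilateral shift is a counterexample), so this is where the real content of the conditions lies.
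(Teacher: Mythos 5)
Your treatment of conditions (1)--(5) is correct, and in fact follows a different route from the paper: you close the cycles $(3)\Rightarrow(1)$ and $(5)\Rightarrow(1)$ directly, via the criterion $a\in a^{2}R\cap Ra^{2}\Leftrightarrow a\in R^{\#}$ and the observation that idempotents $f,g$ with $fR\subseteq gR$ and $Rg\subseteq Rf$ coincide, whereas the paper funnels everything through its three-equation characterization (Theorem~\ref{equations-a}). The genuine gap is in the backward pairings $(6)\Rightarrow(2)$, $(7)\Rightarrow(3)$, $(8)\Rightarrow(4)$, $(9)\Rightarrow(5)$, which is exactly where the content of the theorem sits. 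The converse directions of Lemma~\ref{annihilator} need $x$ regular (for (6),(7)) resp.\ $a$ regular (for (8),(9)), and your recorded regularity facts are all derived \emph{using the hypothesis} $xR=aR$ --- a hypothesis present in (2)--(5) but absent from (6)--(9), where the ideal equality $xR=aR$ is precisely what the backward pairing is supposed to recover. The argument is therefore circular. Concretely, under (7) you know only $axa=a$, ${}^{\circ}x={}^{\circ}a$ and $a^{\circ}\subseteq(x^{\ast})^{\circ}$: regularity of $a$ does give $xR\subseteq aR$ and $Rx^{\ast}\subseteq Ra$ via the lemma, but the remaining inclusion $aR\subseteq xR$ requires $x$ regular, and nothing in your plan supplies it; symmetrically, in (8),(9) you have $x$ regular from $xax=x$ but no source for the regularity of $a$.

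The missing regularity can be extracted, but only by the computation that is the heart of the paper's proof of $(7)\Rightarrow(1)$: from $(1-ax)a=0$ and ${}^{\circ}a={}^{\circ}x$ one gets $x=ax^{2}$; from $a(1-xa)=0$ and $a^{\circ}\subseteq(x^{\ast})^{\circ}$ one gets $x^{\ast}(1-xa)=0$, i.e.\ $x=(xa)^{\ast}x$, whence $xa=(xa)^{\ast}xa$ is Hermitian and $x=xax$; then $1-xa\in{}^{\circ}x={}^{\circ}a$ gives $xa^{2}=a$. Note that once this is done there is no point in detouring back to (3): the three equations $(xa)^{\ast}=xa$, $xa^{2}=a$, $ax^{2}=x$ are already in hand and Theorem~\ref{equations-a} yields (1) outright; a mirrored computation (the paper's $(9)\Rightarrow(1)$, which first converts $(x^{\ast})^{\circ}\subseteq a^{\circ}$ into ${}^{\circ}x\subseteq{}^{\circ}(a^{\ast})$) settles the second family. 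So to repair your plan, either insert these computations to legitimize the backward pairings, or --- as the paper does --- keep only the cheap forward implications $(2)\Rightarrow(6)\Rightarrow(7)$, $(3)\Rightarrow(7)$, $(4)\Rightarrow(8)\Rightarrow(9)$, $(5)\Rightarrow(9)$ and add the direct implications $(7)\Rightarrow(1)$ and $(9)\Rightarrow(1)$.
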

\begin{proof}
$(1)\Rightarrow(2)$: Let $x=a^{\dagger}=a^\#$, then $axa=a$, $x=a(a^\#)^{2}$,
$x^{\ast}=(a^{\dagger})^{\ast}=(a^{\dagger}aa^{\dagger})^{\ast}=(a^{\dagger})^{\ast}a^{\dagger}a,$
and $xa^{2}=a^\#a^{2}=a=aa^{\dagger}a=aa^{\ast}(a^{\dagger})^{\ast}=aa^{\ast}x^{\ast}.$ Thus $xR=aR$ and $Rx^{\ast}=Ra$.

$(2)\Rightarrow(3)$ and $(6)\Rightarrow(7)$ are clear.

$(2)\Rightarrow(6)$ and $(3)\Rightarrow(7)$ are obvious by Lemma \ref{annihilator}.

$(7)\Rightarrow(1)$: Suppose there exists $x\in R$ such that $axa=a$, $^{\circ}x=^{\circ}\!\!a$ and $a^{\circ}\subseteq (x^{\ast})^{\circ}$.
Since $(1-ax)a=0,$ then $1-ax\in$ ${}^{\circ}a={}^{\circ}x$, hence $(1-ax)x=0$.
Since $a(1-xa)=0$, then $1-xa\in a^{\circ}\subseteq (x^{\ast})^{\circ}$,
hence $x^{\ast}(1-xa)=0$, i.e., $x=(xa)^{\ast}x$.
We get $xa=(xa)^{\ast}xa$, hence $xa$ is Hermitian. Finally, $x=xax$ implies $1-xa\in$ $^{\circ}x={}^{\circ}a$,
whence $xa^{2}=a$.
Therefore $a\in R^{\mathrm{EP}}$ by Theorem \ref{equations-a}.

The implications $(1)\Leftrightarrow(4)\Leftrightarrow(5)\Leftrightarrow(8)\Rightarrow(9)$
are similar to  $(1)\Leftrightarrow(2)\Leftrightarrow(3)\Leftrightarrow(6)\Rightarrow(7)$.

$(9)\Rightarrow(1)$: There exists $x\in R$ such that $xax=x$, ${}^{\circ}x={}^{\circ}a$
and $(x^{\ast})^{\circ}\subseteq a^{\circ}$.
It is obvious that $(x^{\ast})^{\circ}\subseteq a^{\circ}$ is equivalent to
$^{\circ}x\subseteq {}^{\circ}(a^{\ast})$. We have $a=xa^{2}$ since $(1-xa)x=0$ implies $(1-xa)a=0$.
Similarly, we have $a^{\ast}=xaa^{\ast}$ since $(1-xa)x=0$ implies $(1-xa)a^{\ast}=0$.
Thus $(xa)^{\ast}=a^{\ast}x^{\ast}=xaa^{\ast}x^{\ast}=xa(xa)^{\ast}$, that is $(xa)^{\ast}=xa.$
By $a^{\ast}=xaa^{\ast}$ and $(xa)^{\ast}=xa$, we have $a^{\ast}=xaa^{\ast}=(xa)^{\ast}a^{\ast}=(axa)^{\ast}$, that is $a=axa.$
Hence by ${}^{\circ}x={}^{\circ}a$, we have $(1-ax)a=0$ implies $(1-ax)x=0$, which gives
$x=ax^{2}$. Therefore $a\in R^{\mathrm{EP}}$ by Theorem~\ref{equations-a}.
\end{proof}

\begin{theorem} \label{biao-c}
Let $a\in R^{\rm EP}$ and denote $p=aa^{\dagger}$. Then the following sets are the same
\begin{itemize}
\item[{\rm (1)}] $\{x\in R: axa=a, xR\subseteq aR\}=\{a^{\dagger}+py(1-p):y\in R\}$;
\item[{\rm (2)}] $\{x\in R: xax=x, xR=aR\}=\{a^{\dagger}+py(1-p):y\in R\}$;
\item[{\rm (3)}] $\{x\in R: axa=a, {}^{\circ}a\subseteq {}^{\circ}x\} =
\{a^{\dagger}+py(1-p):y\in R\}$;
\item[{\rm (4)}] $\{x\in R: xax=x, {}^{\circ}a = {}^{\circ}x\} =
\{a^{\dagger}+py(1-p):y \in R\}$.
\end{itemize}
Furthermore, if $R$ is prime, then any of the above subsets is a singleton if and only if
$a=0$ or $a$ is invertible.
\end{theorem}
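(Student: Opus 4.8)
The plan is to prove that all four left-hand sets coincide with $S:=\{a^{\dagger}+py(1-p):y\in R\}$. I work throughout with the matrix representations of $a$ and $a^{\dagger}$ relative to the projection $p=aa^{\dagger}=a^{\dagger}a$ recorded in (\ref{mra}), using that $a$ is invertible in $pRp$ with inverse $a^{\dagger}$, that $ap=pa=a$ and $a^{\dagger}p=pa^{\dagger}=a^{\dagger}$, that $aR=pR$, and the elementary equivalence $xR\subseteq aR\iff px=x$. The two annihilator descriptions reduce to the two ideal descriptions by Lemma~\ref{annihilator}: since $a$ is EP, hence regular, Lemma~\ref{annihilator}(1) gives $xR\subseteq aR\iff{}^{\circ}a\subseteq{}^{\circ}x$ for every $x$, so the set in (3) equals the set in (1); and since every element of (4) satisfies $xax=x$ and is therefore regular, the same lemma applied in both directions gives ${}^{\circ}a={}^{\circ}x\iff xR=aR$ for such $x$, so the set in (4) equals the set in (2). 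It thus suffices to prove that the sets in (1) and (2) both equal $S$.

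For $S\subseteq(\text{each set})$, take $x=a^{\dagger}+py(1-p)$; a short computation gives $xa=p$, whence $axa=ap=a$ and $xax=px=x$, while $px=x$ gives $xR\subseteq aR$ and $p=xa\in xR$ gives $aR\subseteq xR$, so $xR=aR$, and $x=px$ together with $xa=p$ gives ${}^{\circ}x={}^{\circ}p={}^{\circ}a$. For the reverse inclusion in (1), write $x$ in block form relative to $p$; the equation $axa=a$ forces $ax_{11}a=a$ and hence $x_{11}=a^{\dagger}$ by invertibility of $a$ in $pRp$, while $xR\subseteq aR$ forces $px=x$, i.e.\ the bottom row of $x$ vanishes, so $x=a^{\dagger}+x_{12}\in S$.

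The essential point is the inclusion $(2)\subseteq S$. Here $xax=x$ alone does not pin down the $(1,1)$ block (for instance $x_{11}=0$ already satisfies $x_{11}ax_{11}=x_{11}$), so the inclusion $aR\subseteq xR$ coming from the equality $xR=aR$ must be exploited. From $xR\subseteq aR$ I get $px=x$, so $x=\left[\begin{smallmatrix}x_{11}&x_{12}\\0&0\end{smallmatrix}\right]$ and $xa=\left[\begin{smallmatrix}x_{11}a&0\\0&0\end{smallmatrix}\right]$; then $xax=x$ shows that $g:=x_{11}a$ is idempotent in $pRp$ and that $x_{11}ax_{12}=x_{12}$. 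Since $x=(xa)x$ one has $xR=(xa)R$, and with $xR=aR=pR$ this gives $(xa)R=pR$; writing $p=(xa)s$ and comparing $(1,1)$ blocks yields $p=gs_{11}$, whence $g=gp=g(gs_{11})=gs_{11}=p$, that is $x_{11}a=p$ and $x_{11}=a^{\dagger}$. The remaining equation $x_{11}ax_{12}=px_{12}=x_{12}$ then holds automatically, so $x=a^{\dagger}+x_{12}\in S$. Extracting $x_{11}=a^{\dagger}$ from $aR\subseteq xR$ is the step I expect to demand the most care.

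For the final assertion, $S$ is a singleton precisely when $py(1-p)=0$ for all $y\in R$, i.e.\ when $pR(1-p)=0$. In a prime ring this forces $p=0$ or $1-p=0$, which---exactly as at the end of the proof of Theorem~\ref{biao-b}---means $a=0$ or $a$ is invertible; the converse implications are immediate, since $a=0$ gives $S=\{0\}$ and $a$ invertible gives $p=1$ and $S=\{a^{-1}\}$.
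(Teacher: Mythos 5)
Your proof is correct and follows essentially the same route as the paper's: block representations with respect to the projection $p=aa^{\dagger}$ from (\ref{mra}), invertibility of $a$ in $pRp$, reduction of the annihilator descriptions (3) and (4) to the ideal descriptions (1) and (2) via Lemma~\ref{annihilator}, and the same primeness argument for the final claim. The only divergence is the key inclusion in (2), where you extract $x_{11}a=p$ from the idempotency of $xa$ together with $(xa)R=pR$, whereas the paper writes $a=u\delta_{1}+v\delta_{3}$ and combines this with $u=uau$, $v=uav$ to get $a=ua^{2}$ and hence $u=a^{\dagger}$; the two computations are equivalent in substance.
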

\begin{proof}
We use the matrix representations
of $a$ and $a^\dag$ with respect to the projection $p=aa^\dagger$ given in (\ref{mra}).
Let $x=\left[\begin{smallmatrix} u & v \\ w & z \end{smallmatrix} \right]$ be the representation
of any $x$ with respect to $p$.

(1) Let $x$ satisfy $axa=a$ and $xR \subseteq aR$. From $axa=a$, we have
$$
\left[\begin{matrix} a & 0 \\ 0 & 0 \end{matrix} \right]
=\left[\begin{matrix} a & 0 \\ 0 & 0 \end{matrix} \right]
\left[\begin{matrix} u & v \\ w & z \end{matrix} \right]
\left[\begin{matrix} a & 0 \\ 0 & 0 \end{matrix} \right]
=\left[\begin{matrix} au & av \\ 0 & 0 \end{matrix} \right]
\left[\begin{matrix} a & 0 \\0 & 0 \end{matrix} \right]
=\left[\begin{matrix} aua & 0 \\ 0 & 0 \end{matrix} \right].
$$

Since $a\in R^{\mathrm{EP}}$, we have that $a$ is invertible in $pRp$ and its inverse is $a^{\dagger}.$ Hence $a=aua$ gives $u=a^{\dagger}.$
Since $xR \subseteq aR$, we can write
$$
\left[\begin{matrix} u & v \\ w & z \end{matrix} \right]
=\left[\begin{matrix} a & 0 \\ 0 & 0 \end{matrix} \right]
\left[\begin{matrix} \xi_{1} & \xi_{2} \\ \xi_{3} & \xi_{4} \end{matrix} \right]
=\left[\begin{matrix} a\xi_{1} & a\xi_{2} \\ 0 & 0 \end{matrix} \right].
$$
Therefore, $w=z=0$. Hence
$$
x=\left[\begin{matrix} a^{\dagger} & v \\ 0 & 0 \end{matrix} \right]
=a^{\dagger}+px(1-p)\in \{a^{\dagger}+py(1-p):y\in R\}.
$$
The opposite inclusion is trivial.

(2) Let $x \in R$ satisfy $xax=x$ and $xR = aR$.
Since $x \in a R$, by the proof of (1), we have $w=z=0$. Now,
since $a \in xR$, we can write
$$
\left[\begin{matrix} a & 0 \\ 0 & 0 \end{matrix} \right]
=\left[\begin{matrix} u & v \\ 0 & 0 \end{matrix} \right]
\left[\begin{matrix} \delta_{1} &  \delta_{2} \\ \delta_{3} &  \delta_{4} \end{matrix} \right],
$$
which implies
\begin{equation} \label{re-a}
a=u \delta_{1}+v \delta_{3},\qquad 0=u \delta_{2}+v \delta_{4}.
\end{equation}
Now, we use $xax=x$:
$$
\left[\begin{matrix} u & v \\ 0 & 0 \end{matrix} \right]
=\left[\begin{matrix} u & v \\ 0 & 0 \end{matrix} \right]
\left[\begin{matrix} a & 0 \\ 0 & 0 \end{matrix} \right]
\left[\begin{matrix} u & v \\ 0 & 0 \end{matrix} \right]
=\left[\begin{matrix} ua & 0 \\ 0 & 0 \end{matrix} \right]
\left[\begin{matrix} u & v \\ 0 & 0 \end{matrix} \right]
=\left[\begin{matrix} uau & uav \\ 0 & 0 \end{matrix} \right].
$$
Therefore
\begin{equation} \label{re-b}
u=uau,\qquad v=uav.
\end{equation}
Post-multiply the first equality of (\ref{re-b}) by $ \delta_{1}$ and the second equality of (\ref{re-b}) by $ \delta_{3}$ to obtain
\begin{equation*}
u \delta_{1}=uau \delta_{1}, \qquad v \delta_{3}=uav \delta_{3}.
\end{equation*}
From (\ref{re-a}),
\begin{equation*}
a=u \delta_{1}+v \delta_{3}=uau \delta_{1}+uav \delta_{3}=ua(u \delta_{1}+v \delta_{3})=ua^{2}.
\end{equation*}
We get $u=a^\dag$ because $a$ is invertible in $pRp$ and its inverse is $a^{\dagger}$. Therefore
$$
x=\left[\begin{matrix} a^{\dagger} & v \\ 0 & 0 \end{matrix}\right] = a^{\dagger}+v=a^{\dagger}+px(1-p)\in \{a^{\dagger}+py(1-p):y\in R\}.
$$
For the opposite inclusion, it is easy to check that
$[a^\dag + py(1-p)]a[a^\dag + py(1-p)] = a^\dag + py(1-p)$
and $a^\dag + py(1-p) \in a R$ in view of $a \in R^{\rm EP}$.
From $a =[a^\dag + py(1-p)]a^2$, we deduce that $a \in [a^\dag + py(1-p)]R$.

The proof of statements $(3)$ and $(4)$ follows from $(1)$ and $(2)$, respectively,
since by Lemma \ref{annihilator}, we obtain that $x\in aR$ is equivalent to
${}^{\circ}a\subseteq {}^{\circ}x$ and $xR=aR$ is equivalent to $^{\circ}a = {}^{\circ}x$,
respectively.

The proof of the last affirmation of this theorem has the same proof as the corresponding
part of Theorem \ref{biao-b}.
\end{proof}

By considering that $a$ is EP if and only if $a^{\ast}$ is EP and having in mind
Theorem \ref{equations-a}, Theorem \ref{biao-b}, Theorem \ref{equations-b} and Theorem \ref{biao-c}, we get the following four theorems.

\begin{theorem} \label{equations-c}
Let $a\in R$. Then $a\in R^{\mathrm{EP}}$ if and only if there exists $y\in R$ such that
$$(ay)^{\ast}=ay,~~a^{2}y=a~~\text{and}~~y^{2}a=y.$$
\end{theorem}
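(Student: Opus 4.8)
The plan is to exploit the symmetry already advertised in the sentence preceding the statement: an element $a$ is EP precisely when $a^{\ast}$ is EP. This reduces the claim to an application of Theorem~\ref{equations-a} to the element $a^{\ast}$, followed by the change of variable $y=x^{\ast}$ under the involution. No new idea beyond careful bookkeeping with $\ast$ is required, and because the substitution is its own inverse, the two implications will be genuinely symmetric.

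First I would record the elementary fact that $a\in R^{\mathrm{EP}}$ if and only if $a^{\ast}\in R^{\mathrm{EP}}$. This is immediate from the definition, since being EP is a self-dual condition: one has $(a^{\ast})^{\dagger}=(a^{\dagger})^{\ast}$ and $(a^{\ast})^{\#}=(a^{\#})^{\ast}$, so $a^{\dagger}=a^{\#}$ forces $(a^{\ast})^{\dagger}=(a^{\ast})^{\#}$, and conversely.

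Next, suppose $a\in R^{\mathrm{EP}}$, so that $a^{\ast}\in R^{\mathrm{EP}}$ as well. By Theorem~\ref{equations-a} applied to $a^{\ast}$, there exists $x\in R$ with $(xa^{\ast})^{\ast}=xa^{\ast}$, $x(a^{\ast})^{2}=a^{\ast}$ and $a^{\ast}x^{2}=x$. Put $y=x^{\ast}$ and apply the involution to each equation. I expect the first condition $(xa^{\ast})^{\ast}=xa^{\ast}$ to read $ax^{\ast}=xa^{\ast}$, i.e.\ $ay=(ay)^{\ast}$, which gives $(ay)^{\ast}=ay$. Taking $\ast$ in $x(a^{\ast})^{2}=a^{\ast}$ and using $((a^{\ast})^{2})^{\ast}=a^{2}$ yields $a^{2}y=a$. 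Taking $\ast$ in $a^{\ast}x^{2}=x$ and using $(x^{2})^{\ast}=y^{2}$ yields $y^{2}a=y$. Hence $y$ satisfies the three equations of the statement.

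For the converse I would run the same substitution backwards: given $y$ with $(ay)^{\ast}=ay$, $a^{2}y=a$ and $y^{2}a=y$, set $x=y^{\ast}$ and take involutions to recover exactly the three hypotheses of Theorem~\ref{equations-a} for $a^{\ast}$; that theorem then gives $a^{\ast}\in R^{\mathrm{EP}}$, and therefore $a\in R^{\mathrm{EP}}$. The only obstacle is the routine tracking of the involution through products — in particular the order-reversal $(uv)^{\ast}=v^{\ast}u^{\ast}$, which is what turns $x(a^{\ast})^{2}=a^{\ast}$ into $a^{2}y=a$ (and not $ya^{2}=a$) and $a^{\ast}x^{2}=x$ into $y^{2}a=y$. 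Since each step is reversible, no separate work is needed for the two directions.
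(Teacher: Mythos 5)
Your proof is correct and matches the paper's own argument: the paper derives Theorem~\ref{equations-c} from Theorem~\ref{equations-a} precisely by noting that $a$ is EP if and only if $a^{\ast}$ is, and transporting the three equations through the involution via $y=x^{\ast}$. Your careful tracking of the order-reversal $(uv)^{\ast}=v^{\ast}u^{\ast}$ fills in exactly the bookkeeping the paper leaves implicit.
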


\begin{theorem} \label{biao-bb}
%Let $a\in R$ and $\delta$ as \emph{(\ref{biao-a})}. If $\delta\neq \emptyset$, then
%$$\delta=\{a^{\dagger}+aa^{\dagger}y(1-aa^{\dagger}):y\in R\}.$$
Let $a \in R$. If
$a$ is EP, then $\{ y \in R: (ay)^* = ay, a^2y =a, y^2a = y \} = \{
a^\dag +(1-aa^\dag)xaa^\dag : x \in R\}$.
Moreover, if $R$ is a prime ring, then
$\{ y \in R: (ay)^* = ay, a^2y =a, y^2a = y \} = \{ a^\dag\}$ if and only if $a=0$ or $a$ is invertible.
\end{theorem}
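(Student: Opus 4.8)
The plan is to deduce this statement from Theorem~\ref{biao-b} by applying that result to $a^{\ast}$ and transporting everything through the involution, exactly as announced in the sentence preceding the four ``dual'' theorems. The guiding observation is that the solution set
$T_a=\{ y \in R: (ay)^{\ast} = ay,\ a^2y =a,\ y^2a = y \}$
studied here is the image under $x\mapsto x^{\ast}$ of the solution set
$S_{a^{\ast}}=\{ x \in R: (xa^{\ast})^{\ast} = xa^{\ast},\ x(a^{\ast})^2 = a^{\ast},\ a^{\ast}x^2 = x \}$
associated to $a^{\ast}$ in Theorem~\ref{biao-b}.

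First I would check this correspondence one equation at a time, taking adjoints. Setting $y=x^{\ast}$, the condition $(ay)^{\ast}=ay$ reads $xa^{\ast}=ax^{\ast}$, which is precisely $(xa^{\ast})^{\ast}=xa^{\ast}$; applying $\ast$ to $a^2y=a$ gives $x(a^{\ast})^2=a^{\ast}$; and applying $\ast$ to $y^2a=y$ gives $a^{\ast}x^2=x$. Since $\ast$ is an involution, $x\mapsto x^{\ast}$ is a bijection of $R$, so $T_a=(S_{a^{\ast}})^{\ast}$.

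Next, because $a$ is EP if and only if $a^{\ast}$ is EP, I would apply Theorem~\ref{biao-b} to $a^{\ast}$ to obtain
$S_{a^{\ast}}=\{ (a^{\ast})^{\dagger}+a^{\ast}(a^{\ast})^{\dagger}\,y\,(1-a^{\ast}(a^{\ast})^{\dagger}): y\in R\}$.
Using the standard identity $(a^{\ast})^{\dagger}=(a^{\dagger})^{\ast}$ together with
$a^{\ast}(a^{\ast})^{\dagger}=(a^{\dagger}a)^{\ast}=a^{\dagger}a=aa^{\dagger}$,
where the last equality holds since $a$ is EP, this set rewrites as
$\{ (a^{\dagger})^{\ast}+aa^{\dagger}\,y\,(1-aa^{\dagger}): y\in R\}$.
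Finally, applying $\ast$ elementwise and using that $p:=aa^{\dagger}$ is Hermitian, one has
$[(a^{\dagger})^{\ast}+py(1-p)]^{\ast}=a^{\dagger}+(1-p)y^{\ast}p$;
letting $x=y^{\ast}$ range over $R$ yields exactly
$T_a=\{ a^{\dagger}+(1-aa^{\dagger})\,x\,aa^{\dagger}: x\in R\}$, which is the asserted description. For the ``moreover'' part, since $\ast$ is a bijection, $T_a$ is a singleton if and only if $S_{a^{\ast}}$ is; by the corresponding part of Theorem~\ref{biao-b} this occurs exactly when $a^{\ast}=0$ or $a^{\ast}$ is invertible, that is, when $a=0$ or $a$ is invertible.

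I do not expect a genuine obstacle here: the entire argument is a transport of structure through the involution, and the only care needed is bookkeeping, namely verifying that each of the three defining equations maps to its correct companion under $x\mapsto x^{\ast}$ and correctly identifying the idempotent $a^{\ast}(a^{\ast})^{\dagger}$ with $aa^{\dagger}$ using the EP hypothesis.
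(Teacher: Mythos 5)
Your proposal is correct and is exactly the paper's intended argument: the paper derives Theorem~\ref{biao-bb} (together with Theorems~\ref{equations-c}, \ref{equations-d} and \ref{biao-cc}) precisely by noting that $a$ is EP if and only if $a^{\ast}$ is EP and transporting Theorem~\ref{biao-b} through the involution, which is what you carry out in detail. Your verification of the equation-by-equation correspondence, the identification $a^{\ast}(a^{\ast})^{\dagger}=a^{\dagger}a=aa^{\dagger}$, and the singleton equivalence are all sound.
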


\begin{theorem} \label{equations-d}
Let $a\in R$. Then the following are equivalent:
\begin{itemize}
\item[{\rm (1)}] $a\in R^{\mathrm{EP}}$;
\item[{\rm (2)}] there exists $y\in R$ such that $aya=a$, $Ry=Ra$ and $y^{\ast}R=aR$;
\item[{\rm (3)}] there exists $y\in R$ such that $aya=a$, $Ry=Ra$ and $y^{\ast}R\subseteq aR$;
\item[{\rm (4)}] there exists $y\in R$ such that $yay=y$, $Ry=Ra$ and $y^{\ast}R=aR$;
\item[{\rm (5)}] there exists $y\in R$ such that $yay=y$, $Ry=Ra$ and $aR\subseteq y^{\ast}R$;
\item[{\rm (6)}] there exists $y\in R$ such that $aya=a$, $y^{\circ}=a^{\circ}$
and ${}^{\circ}(y^{\ast})={}^{\circ}a$;
\item[{\rm (7)}] there exists $y\in R$ such that $aya=a$, $y^{\circ}=a^{\circ}$
and $^{\circ}a\subseteq {}^{\circ}(y^{\ast})$;
\item[{\rm (8)}] there exists $y\in R$ such that $yay=y$, $y^{\circ}=a^{\circ}$
and $^{\circ}(y^{\ast}) = {}^{\circ}a$;
\item[{\rm (9)}] there exists $y\in R$ such that $yay=y$, $y^{\circ}=a^{\circ}$
and $^{\circ}(y^{\ast})\subseteq {}^{\circ}a$.
\end{itemize}
\end{theorem}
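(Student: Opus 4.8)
The plan is to deduce this theorem directly from Theorem \ref{equations-b} by transporting everything across the involution, exactly as announced in the paragraph preceding the statement. The two facts I would rely on are that $a\in R^{\mathrm{EP}}$ if and only if $a^{\ast}\in R^{\mathrm{EP}}$, and that the involution, being a bijective anti-automorphism of order two, interchanges the one-sided structures. Concretely, for any $b\in R$ one has $(bR)^{\ast}=Rb^{\ast}$, $(Rb)^{\ast}=b^{\ast}R$, $({}^{\circ}b)^{\ast}=(b^{\ast})^{\circ}$ and $(b^{\circ})^{\ast}={}^{\circ}(b^{\ast})$: the first two follow from $(br)^{\ast}=r^{\ast}b^{\ast}$, and the last two from the equivalence $rb=0\Leftrightarrow b^{\ast}r^{\ast}=0$. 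Since $\ast$ is a bijection, all four identities also respect inclusions, which is what allows the ``$\subseteq$'' variants to transfer as well.

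First I would apply Theorem \ref{equations-b} to the element $a^{\ast}$ in place of $a$. This yields nine conditions, each asserting the existence of some $x\in R$ satisfying inner-inverse equations together with ideal or annihilator relations built from $a^{\ast}$. Because $x\mapsto x^{\ast}$ is a bijection of $R$, the quantifier ``there exists $x\in R$'' may be rewritten as ``there exists $y\in R$'' under the substitution $x=y^{\ast}$ (equivalently $y=x^{\ast}$). The core of the proof is then to check that, after this substitution and after applying $\ast$ to each defining relation, condition $(k)$ of Theorem \ref{equations-b} for $a^{\ast}$ becomes precisely condition $(k)$ of the present theorem for $a$.

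This verification is a routine application of the four interchange identities. For instance $a^{\ast}xa^{\ast}=a^{\ast}$ with $x=y^{\ast}$ becomes $a^{\ast}y^{\ast}a^{\ast}=a^{\ast}$, and applying $\ast$ gives $aya=a$; likewise $x^{\ast}a^{\ast}x^{\ast}=x^{\ast}$ turns into $yay=y$. The ideal relation $xR=a^{\ast}R$ becomes $y^{\ast}R=a^{\ast}R$, and applying $(bR)^{\ast}=Rb^{\ast}$ gives $Ry=Ra$; similarly $Rx^{\ast}=Ra^{\ast}$ becomes $Ry=Ra^{\ast}$, and applying $\ast$ gives $y^{\ast}R=aR$. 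The annihilator relations transform the same way: for example ${}^{\circ}x={}^{\circ}(a^{\ast})$ becomes ${}^{\circ}(y^{\ast})={}^{\circ}(a^{\ast})$, whence $y^{\circ}=a^{\circ}$, while $(x^{\ast})^{\circ}=(a^{\ast})^{\circ}$ becomes $y^{\circ}=(a^{\ast})^{\circ}$, whence ${}^{\circ}(y^{\ast})={}^{\circ}a$. Carrying this out for all nine items shows that condition $(k)$ here is equivalent to condition $(k)$ of Theorem \ref{equations-b} applied to $a^{\ast}$, hence to $a^{\ast}\in R^{\mathrm{EP}}$, hence to $a\in R^{\mathrm{EP}}$.

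I do not expect a genuine obstacle here, since the whole argument is a bookkeeping exercise that moves the four kinds of relation (the two inner-inverse equations, the one-sided ideal equalities, and the annihilator equalities and inclusions) through the involution. The only point requiring care is to apply $y=x^{\ast}$ together with the correct interchange identity in each slot, so that every one of the nine conditions lands on its intended counterpart rather than on a neighbouring one; checking a single representative of each type (one equation, one ideal relation, one annihilator relation, one inclusion) and then invoking symmetry is enough to make this transparent.
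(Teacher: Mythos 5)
Your proposal is correct and is exactly the paper's argument: the paper derives Theorem \ref{equations-d} from Theorem \ref{equations-b} by noting that $a\in R^{\mathrm{EP}}$ if and only if $a^{\ast}\in R^{\mathrm{EP}}$ and transporting each condition through the involution, precisely the substitution $x=y^{\ast}$ and interchange identities you describe. You merely spell out the bookkeeping that the paper leaves implicit, and your item-by-item correspondence checks out.
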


\begin{theorem} \label{biao-cc}
Let $a\in R^{EP}$ and denote $p=aa^{\dagger}$. Then the following sets are the same
\begin{itemize}
\item[{\rm (1)}] $\{y\in R: aya=a, Ry \subseteq Ra\}=\{a^{\dagger}+(1-p)xp:x\in R\}$;
\item[{\rm (2)}] $\{y\in R: yay=y, Ry=Ra\}=\{a^{\dagger}+(1-p)xp:x\in R\}$;
\item[{\rm (3)}] $\{y\in R: aya=a, a^{\circ}\subseteq y^{\circ}\} =
\{a^{\dagger}+(1-p)xp : x\in R\}$;
\item[{\rm (4)}] $\{y\in R: yay=y, a^{\circ} = y^{\circ}\} =
\{a^{\dagger}+(1-p)xp : x\in R\}$.
\end{itemize}
Furthermore, if $R$ is prime, then any of
the above subsets is a singleton if and only if $a=0$ or $a$ is invertible.
\end{theorem}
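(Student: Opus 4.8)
The plan is to deduce Theorem~\ref{biao-cc} directly from Theorem~\ref{biao-c} by applying the latter to $a^{\ast}$ and then conjugating every condition by the involution, which is exactly the reduction announced in the paragraph preceding Theorem~\ref{equations-c}. First I would record the facts that make the reduction legitimate: since $a\in R^{\rm EP}$ we also have $a^{\ast}\in R^{\rm EP}$, with $(a^{\ast})^{\dagger}=(a^{\dagger})^{\ast}$; moreover the projection attached to $a^{\ast}$ coincides with $p$, because $a^{\ast}(a^{\ast})^{\dagger}=(a^{\dagger}a)^{\ast}=p^{\ast}=p$, where I use that $a^{\dagger}a=aa^{\dagger}=p$ is Hermitian, which holds precisely because $a$ is EP. Thus Theorem~\ref{biao-c}, read with $a^{\ast}$ in place of $a$, yields four sets of elements $x$ all equal to the parametrized family $\{(a^{\dagger})^{\ast}+pt(1-p):t\in R\}$.

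Next I would apply the involution, using that $z\mapsto z^{\ast}$ is an anti-automorphism and an involutive bijection of $R$. It reverses products, so $a^{\ast}xa^{\ast}=a^{\ast}\Leftrightarrow ax^{\ast}a=a$ and $xa^{\ast}x=x\Leftrightarrow x^{\ast}ax^{\ast}=x^{\ast}$; it interchanges left and right structures, giving $(zR)^{\ast}=Rz^{\ast}$ and $({}^{\circ}z)^{\ast}=(z^{\ast})^{\circ}$, whence $xR\subseteq a^{\ast}R\Leftrightarrow Rx^{\ast}\subseteq Ra$, $xR=a^{\ast}R\Leftrightarrow Rx^{\ast}=Ra$, ${}^{\circ}(a^{\ast})\subseteq{}^{\circ}x\Leftrightarrow a^{\circ}\subseteq(x^{\ast})^{\circ}$, and ${}^{\circ}(a^{\ast})={}^{\circ}x\Leftrightarrow a^{\circ}=(x^{\ast})^{\circ}$; and since $p^{\ast}=p$ it sends the family $\{(a^{\dagger})^{\ast}+pt(1-p):t\in R\}$ onto $\{a^{\dagger}+(1-p)sp:s\in R\}$. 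Substituting $y=x^{\ast}$ then turns statements (1)--(4) of Theorem~\ref{biao-c} applied to $a^{\ast}$ into statements (1)--(4) of Theorem~\ref{biao-cc} verbatim. Finally, the prime-ring assertion is inherited: the common set is the bijective $\ast$-image of the set in Theorem~\ref{biao-c}, hence a singleton if and only if the latter is, i.e. if and only if $a^{\ast}=0$ or $a^{\ast}$ is invertible, i.e. if and only if $a=0$ or $a$ is invertible. Equivalently, one may observe directly that $\{a^{\dagger}+(1-p)sp:s\in R\}$ is a singleton if and only if $(1-p)Rp=0$, which by primeness forces $p=0$ or $1-p=0$, and these two cases correspond (since $ap=pa=a$ for an EP element) to $a=0$ and to the invertibility of $a$.

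The computations are all routine once this dictionary is set up, so I do not expect a serious obstacle; the only place demanding real care is the bookkeeping of the anti-automorphism. I must match each left-hand condition with the correct right-hand condition, remembering that products are reversed and that one-sided ideals and one-sided annihilators switch from left to right, and I must check that a set inclusion is \emph{preserved} rather than reversed under $z\mapsto z^{\ast}$ (which it is, because $\ast$ is an involutive bijection applied elementwise). I would therefore carry out the four translations explicitly and in parallel with the labelling of Theorem~\ref{biao-c}, to guarantee that (1) corresponds to (1), (2) to (2), and so on, with no mismatch between the inner-inverse/outer-inverse equations and the accompanying ideal or annihilator conditions.
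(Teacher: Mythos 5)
Your proposal is correct and is exactly the paper's argument: the paper derives Theorem~\ref{biao-cc} from Theorem~\ref{biao-c} by the same duality remark ($a$ is EP iff $a^{\ast}$ is EP, then conjugate by the involution), and you have merely spelled out the bookkeeping--including the key observations that $(a^{\ast})^{\dagger}=(a^{\dagger})^{\ast}$, that the projection attached to $a^{\ast}$ is again $p$, and that $\ast$ swaps left/right ideals and annihilators while preserving inclusions--all of which checks out. The prime-ring claim is also handled correctly, either by transport from Theorem~\ref{biao-c} or by your direct argument that $(1-p)Rp=0$ forces $p=0$ or $p=1$.
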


We will characterize when $a \in R$ is EP by another three equations.

\begin{theorem} \label{julio-a}
Let $a\in R$. Then $a\in R^{\mathrm{EP}}$ if and only if there exists $x\in R$ such that
\begin{eqnarray}\label{j-a}
a^{2}x=a, ~~ ax=xa ~~ \text{and} ~~ (ax)^{\ast}=ax.
\end{eqnarray}
\end{theorem}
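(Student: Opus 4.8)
The plan is to treat the two implications separately, with Lemma~\ref{group-aa-aab} serving as the engine for the nontrivial direction.

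For the forward implication, I would take $x = a^{\dagger} = a^{\#}$ (these coincide precisely because $a$ is EP) and verify the three equations in (\ref{j-a}) directly. Here $a^{2}x = a^{2}a^{\#} = a$ is the standard group-inverse identity, $ax = aa^{\#} = a^{\#}a = xa$ is the commutativity of the group inverse, and $(ax)^{\ast} = (aa^{\dagger})^{\ast} = aa^{\dagger} = ax$ combines $a^{\#} = a^{\dagger}$ with the Moore--Penrose Hermitian condition. This direction is routine.

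For the converse, suppose $x$ satisfies (\ref{j-a}); the target, via Lemma~\ref{group-aa-aab}, is to show $a \in R^{\#}$ and that $aa^{\#}$ is Hermitian. The first thing I would record is that $ax$ is a Hermitian idempotent: it is Hermitian by hypothesis, and idempotent because $(ax)^{2} = a(xa)x = a(ax)x = a^{2}x^{2} = (a^{2}x)x = ax$, using $ax = xa$ and $a^{2}x = a$ repeatedly. The key step is then to exhibit the group inverse explicitly, and I would propose the candidate $b = ax^{2}$. Checking $ab = a^{2}x^{2} = ax = ba$ (so $a$ and $b$ commute, where $ba = ax^{2}a = a(x^{2}a) = a(ax^{2}) = a^{2}x^{2}$), then $aba = (ax)a = a(xa) = a^{2}x = a$, and finally $bab = ax^{2}\cdot ax = a^{2}x^{3} = (a^{2}x)x^{2} = ax^{2} = b$, establishes $a \in R^{\#}$ with $a^{\#} = ax^{2}$. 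Consequently $aa^{\#} = a\cdot ax^{2} = a^{2}x^{2} = ax$, which is Hermitian by hypothesis, so Lemma~\ref{group-aa-aab} yields $a \in R^{\mathrm{EP}}$.

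I do not expect a serious obstacle. Worth noting is that the group-inverse verification uses only the two relations $a^{2}x = a$ and $ax = xa$ (commutativity lets every monomial in $a$ and $x$ be freely rearranged, after which $a^{2}x = a$ collapses the degree), while the Hermitian hypothesis $(ax)^{\ast} = ax$ is invoked solely at the final step. The only genuinely creative move is guessing the correct group-inverse candidate $ax^{2}$ --- equivalently, recognizing in advance that $aa^{\#}$ is forced to equal the idempotent $ax$; once that candidate is in hand the computations are purely mechanical.
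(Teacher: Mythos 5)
Your proof is correct and follows essentially the same route as the paper: both directions use $x=a^{\dagger}=a^{\#}$ for the forward implication, and for the converse both propose $ax^{2}$ as the group inverse, verify the group-inverse equations using only $a^{2}x=a$ and $ax=xa$, and conclude from $aa^{\#}=a^{2}x^{2}=ax$ being Hermitian via Lemma~\ref{group-aa-aab}. The only cosmetic difference is your explicit remark that $ax$ is idempotent, which the paper leaves implicit.
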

\begin{proof}
If $a$ is EP, by taking $x=a^{\dagger}=a^\#$, we get (\ref{j-a}).
Conversely, assume that exists $x \in R$ such that (\ref{j-a}) is satisfied.
We shall show that $a\in R^{\mathrm{EP}}$ and $a^\#=ax^{2}.$
Since $ax=xa$, we get $a(ax^{2})=(ax^{2})a$, but in addition, $a(ax^{2})=(a^{2}x)x=ax$,
which leads to $a(ax^{2})a=a^{2}x=a$ and
$(ax^{2})a(ax^{2})=(ax^{2})ax=(a^{2}x)x^{2}=ax^{2}.$
Since $aa^\#=a^{2}x^{2}=ax$ is Hermitian, the conclusion follows from Lemma \ref{group-aa-aab}.
\end{proof}

We have seen that
if $a\in R$ is EP, then $\{ x \in R: a^2x=a, ax=xa, (ax)^*=ax \}$ is not empty.
In next theorem we describe this set.

\begin{theorem} \label{julio-biao-a}
%Let $a\in R$ and $\delta$ as \emph{(\ref{biao-a})}. If $\delta\neq \emptyset$, then
%$$\delta=\{a^{\dagger}+aa^{\dagger}y(1-aa^{\dagger}):y\in R\}.$$
Let $a \in R$. If
$a$ is EP, then $\{ x \in R: a^{2}x=a, ax=xa, (ax)^{\ast}=ax \} = \{
a^\dag +(1-aa^\dag) y (1-aa^\dag): y \in R\}$.
Moreover, if $R$ is a semiprime ring, then
$\{ x \in R: a^{2}x=a, ax=xa, (ax)^{\ast}=ax \} = \{ a^\dag\}$ if and only if $a$ is invertible.
\end{theorem}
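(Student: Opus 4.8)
The plan is to follow the matrix-representation strategy already used in Theorems~\ref{biao-b}, \ref{biao-bb} and \ref{biao-c}. Since $a$ is EP, the idempotent $p=aa^{\dagger}=a^{\dagger}a$ is a projection, $a$ is invertible in the corner ring $pRp$ with inverse $a^{\dagger}$, and $a,a^{\dagger}$ admit the block forms displayed in (\ref{mra}). I would write an arbitrary solution as $x=\left[\begin{smallmatrix} u & v \\ w & z \end{smallmatrix}\right]$ with respect to $p$ and read off the restrictions that each of the three equations places on the blocks $u,v,w,z$.

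First I would use $a^{2}x=a$. Since $a^{2}=\left[\begin{smallmatrix} a^{2} & 0 \\ 0 & 0 \end{smallmatrix}\right]$, the product $a^{2}x$ has first row $(a^{2}u,\,a^{2}v)$ and zero second row, so matching against $a$ yields $a^{2}u=a$ and $a^{2}v=0$; the invertibility of $a^{2}$ in $pRp$ then forces $u=a^{\dagger}$ and $v=0$. Next I would impose $ax=xa$: with $v=0$ known, a block computation gives $ax=aa^{\dagger}=p$ and $xa=\left[\begin{smallmatrix} p & 0 \\ wa & 0 \end{smallmatrix}\right]$, so that $wa=0$, and right-multiplying by $a^{\dagger}$ gives $w=waa^{\dagger}=0$. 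At this stage $x=\left[\begin{smallmatrix} a^{\dagger} & 0 \\ 0 & z \end{smallmatrix}\right]$ and $ax=p$ is Hermitian; hence the third equation $(ax)^{\ast}=ax$ is automatically satisfied and places no constraint, leaving $z=(1-p)x(1-p)$ entirely free. This proves that the solution set is contained in $\{a^{\dagger}+(1-aa^{\dagger})y(1-aa^{\dagger}):y\in R\}$. The reverse inclusion is a direct verification: for $x=a^{\dagger}+(1-p)y(1-p)$ one uses $a(1-p)=(1-p)a=0$ to obtain $a^{2}x=a$ and $ax=xa=p$, whence $(ax)^{\ast}=ax$ as well.

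For the final assertion, the set $\{a^{\dagger}+(1-p)y(1-p):y\in R\}$ is a singleton precisely when $(1-p)y(1-p)=0$ for every $y$, i.e. when $(1-p)R(1-p)=0$. Here I would apply semiprimeness directly to the element $1-p$: from $(1-p)R(1-p)=0$ it follows that $1-p=0$, that is $aa^{\dagger}=1$, which (as $a$ is EP) is equivalent to the invertibility of $a$; the converse is immediate. I do not expect a genuine obstacle in the computation; the two points meriting care are that the Hermitian condition $(ax)^{\ast}=ax$ turns out to be redundant once $a^{2}x=a$ and $ax=xa$ have determined three of the four blocks, and that---in contrast to the prime-ring statements of Theorems~\ref{biao-b} and \ref{biao-bb}, whose solution sets have the distinct flanking idempotents $p$ and $1-p$---here both flanking idempotents equal $1-p$, so that semiprimeness (rather than primeness) is exactly what is needed and the alternative ``$a=0$'' does not occur.
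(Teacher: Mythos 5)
Your proposal is correct and follows essentially the same route as the paper's own proof: the same block decomposition with respect to the projection $p=aa^{\dagger}$, the same extraction of $u=a^{\dagger}$, $v=0$ from $a^{2}x=a$ and of $w=0$ from $ax=xa$, the same direct verification of the reverse inclusion, and the same application of semiprimeness to the element $1-p$. The only (immaterial) difference is that you state explicitly that the Hermitian condition $(ax)^{\ast}=ax$ becomes redundant once the first two equations are imposed, a fact the paper leaves implicit by simply never invoking that equation in the forward inclusion.
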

\begin{proof}
Suppose that $a$ is an EP element.
We use the matrix representations
of $a$ and $a^\dag$ with respect to the projection $p=aa^\dagger$ given in (\ref{mra}).
Let $x=\left[\begin{smallmatrix} u & v \\ w & z \end{smallmatrix} \right]$ be the representation
of any $x\in R$ with respect to $p$.

Let $x \in R$ satisfy $a^2x=a$, $ax=xa$ and $(ax)^*=ax$.
From $a^{2}x=a$, we get
$$
\left[\begin{matrix} a & 0 \\ 0 & 0 \end{matrix} \right]^{2}
\left[\begin{matrix} u & v \\ w & z \end{matrix} \right]
=\left[\begin{matrix} a & 0 \\ 0 & 0 \end{matrix} \right],
$$
which leads to $a^2u=a$ and $a^2 v=0$.
Since $a$ is EP and $aa^{\dagger}=p=a^{\dagger}a$, then $a$ is invertible in $pRp$ and its inverse is $a^{\dagger}$.
Hence from $a=a^{2}u$ and $0=a^{2}v$, we obtain $u=a^{\dagger}$ and $0=v$, respectively. Now, from $ax=xa$ we have
$$
\left[\begin{matrix} a & 0 \\ 0 & 0 \end{matrix} \right]
\left[\begin{matrix} a^{\dagger} & 0 \\ w & z \end{matrix} \right]
= \left[\begin{matrix} a^{\dagger} & 0 \\ w & z \end{matrix} \right]
\left[\begin{matrix} a & 0 \\ 0 & 0 \end{matrix} \right],
$$
which implies $0=wa$, and taking into account that $a$ is invertible in
$pRp$, we have $0=w$. Therefore,
$$
x=\left[\begin{matrix} a^{\dagger} & 0 \\ 0 & z \end{matrix} \right]
=a^{\dagger}+z,
$$
that is $\{ x \in R: a^{2}x=a, ax=xa, (ax)^{\ast}=ax \}\subseteq\{a^{\dagger}+(1-aa^{\dagger})y(1-aa^{\dagger}):y\in R\}.$

Let us prove the opposite inclusion.
We have $aa^{\dagger}=a^{\dagger}a$ since $a$ is EP. Now,
\begin{equation*}
\begin{split}
~
&a[a^{\dagger}+(1-aa^{\dagger})y(1-aa^{\dagger})]=aa^{\dagger} \text{~is Hermitian},\\
&a^{2}[a^{\dagger}+(1-aa^{\dagger})y(1-aa^{\dagger})]=a^{2}a^{\dagger}=a,\\
&a[a^{\dagger}+(1-aa^{\dagger})y(1-aa^{\dagger})]
=aa^{\dagger}=a^{\dagger}a
=[a^{\dagger}+(1-aa^{\dagger})y(1-aa^{\dagger})]a.
\end{split}
\end{equation*}

Suppose that $R$ is a semiprime ring.
If $a$ is invertible, then $\{ x \in R: a^{2}x=a, ax=xa, (ax)^{\ast}=ax \} = \{a^{-1}\}$.
If $\{ x \in R: a^{2}x=a, ax=xa, (ax)^{\ast}=ax \}$ is a singleton, then $(1-aa^\dag)y(1-aa^\dag)=0$ for all $y \in R$. By
using that $R$ is semiprime, we get $1-aa^\dagger=0$, which (since $a$ is EP)
is equivalent to the invertibility of $a$.
\end{proof}

\begin{theorem} \label{julio-b}
Let $a\in R$. Then the following are equivalent:
\begin{itemize}
\item[{\rm (1)}] $a\in R^{\mathrm{EP}}$;
\item[{\rm (2)}] $^{\circ}(a^{2})\subseteq {}^{\circ}a$ and there exists $x\in R$ such that $xa^{2}=a$ and $(xa)^{\ast}=xa$;
\item[{\rm (3)}] $(a^{2})^{\circ}\subseteq a^{\circ}$ and there exists $x\in R$ such that $a^{2}x=a$ and $(ax)^{\ast}=ax$.
\end{itemize}
Furthermore, under these equivalences one has that the set of elements $x$ satisfying $(2)$ is
$\{a^{\dagger}+y(1-aa^{\dagger}): y\in R\}$
and the set of elements $x$ satisfying $(3)$ is
$\{a^{\dagger}+(1-aa^{\dagger})z: z\in R\}$.
%If $R$ is prime and $a$ is EP, then
%the sets of $x$ satisfying $(2)$ or $(3)$ is $\{a^\dag \}$.
\end{theorem}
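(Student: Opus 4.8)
The plan is to prove $(1)\Leftrightarrow(2)$ together with the description of the set of $x$ satisfying $(2)$, and then to obtain $(1)\Leftrightarrow(3)$ and its set at no extra cost from the symmetry that $a$ is EP if and only if $a^{\ast}$ is EP. Under the substitution $x\mapsto x^{\ast}$, the datum of $(3)$ for $a$ is exactly the datum of $(2)$ for $a^{\ast}$: the equation $a^{2}x=a$ becomes $x^{\ast}(a^{\ast})^{2}=a^{\ast}$, the requirement $(ax)^{\ast}=ax$ becomes $(x^{\ast}a^{\ast})^{\ast}=x^{\ast}a^{\ast}$, and $(a^{2})^{\circ}\subseteq a^{\circ}$ becomes ${}^{\circ}((a^{\ast})^{2})\subseteq{}^{\circ}(a^{\ast})$, since $b\mapsto b^{\ast}$ is a bijection carrying right annihilators onto left annihilators. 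Hence $(3)$ holds for $a$ iff $(2)$ holds for $a^{\ast}$ iff $a^{\ast}$ is EP iff $a$ is EP. For the easy implication $(1)\Rightarrow(2)$ I would take $x=a^{\dagger}=a^{\#}$: then $xa^{2}=a^{\#}a^{2}=a$ and $(xa)^{\ast}=(a^{\dagger}a)^{\ast}=a^{\dagger}a=xa$, while ${}^{\circ}(a^{2})\subseteq{}^{\circ}a$ is automatic because $a=a^{2}a^{\#}$ forces $za^{2}=0\Rightarrow za=za^{2}a^{\#}=0$.

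The core of the argument is $(2)\Rightarrow(1)$, and here the only non-formal input is the inclusion ${}^{\circ}(a^{2})\subseteq{}^{\circ}a$, which is what upgrades the one-sided datum $xa^{2}=a$ to genuine (two-sided) group structure. First I would derive $axa=a$: setting $f=ax$, the identity $fa^{2}=a(xa^{2})=a^{2}$ gives $(1-f)a^{2}=0$, so $1-f\in{}^{\circ}(a^{2})\subseteq{}^{\circ}a$ and hence $(1-f)a=0$, that is $axa=a$. Next, $x^{2}$ is an inner inverse of $a^{2}$, since $a^{2}x^{2}a^{2}=a^{2}x(xa^{2})=a^{2}xa=a(axa)=a^{2}$; feeding $a^{2}x^{2}a^{2}=a^{2}$ back into the annihilator inclusion yields $(1-a^{2}x^{2})a=0$, i.e.\ $a=a^{2}(x^{2}a)\in a^{2}R$. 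Together with $a=xa^{2}\in Ra^{2}$ this gives $a\in R^{\#}$ by the standard criterion $a\in R^{\#}\Leftrightarrow a\in a^{2}R\cap Ra^{2}$. Finally, comparing $xa^{2}=a=a^{\#}a^{2}$ gives $(x-a^{\#})a^{2}=0$, so $x-a^{\#}\in{}^{\circ}(a^{2})\subseteq{}^{\circ}a$ and therefore $xa=a^{\#}a=aa^{\#}$; since $(xa)^{\ast}=xa$, the idempotent $aa^{\#}$ is Hermitian, and Lemma \ref{group-aa-aab} gives $a\in R^{\mathrm{EP}}$. I expect the extraction $axa=a$ (and the symmetric passage to $a\in a^{2}R$) to be the main obstacle, because this is precisely where the inclusion of annihilators must be \emph{invoked} rather than manipulated by pure algebra.

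For the ``furthermore'' part I would assume $a$ is EP and pass to the matrix representation of $a$ and $a^{\dagger}$ with respect to the projection $p=aa^{\dagger}=a^{\dagger}a$ displayed in (\ref{mra}), exactly as in Theorem \ref{biao-b} and Theorem \ref{julio-biao-a}. Writing $x=\left[\begin{smallmatrix}u&v\\w&z\end{smallmatrix}\right]$, the equation $xa^{2}=a$ reads $ua^{2}=a$ and $wa^{2}=0$; since $a$ is invertible in $pRp$ with inverse $a^{\dagger}$, this forces $u=a^{\dagger}$ and $w=0$, while $v$ and $z$ remain free. The condition $(xa)^{\ast}=xa$ then holds automatically, because $xa=\left[\begin{smallmatrix}p&0\\0&0\end{smallmatrix}\right]$ is already Hermitian, so it imposes nothing further. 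Thus $\{x:xa^{2}=a,\ (xa)^{\ast}=xa\}=\{a^{\dagger}+y(1-aa^{\dagger}):y\in R\}$, which is the asserted set for $(2)$. The set for $(3)$ I would read off by applying the involution to the description for $a^{\ast}$, using $(a^{\ast})^{\dagger}=(a^{\dagger})^{\ast}$ and $a^{\ast}(a^{\ast})^{\dagger}=p$, which turns $a^{\dagger}+y(1-p)$ into $a^{\dagger}+(1-p)z$, i.e.\ $\{a^{\dagger}+(1-aa^{\dagger})z:z\in R\}$.
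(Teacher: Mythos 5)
Your proposal is correct, but it diverges from the paper's proof at three points, and the comparison is instructive. For the key implication $(2)\Rightarrow(1)$ both arguments begin identically (deriving $axa=a$ from $(ax-1)a^{2}=0$ and the annihilator inclusion), but then they part ways: the paper stays self-contained and exhibits the group inverse explicitly, proving $a^{\#}=x^{2}a$ by verifying the three defining identities (in passing it also gets $ax^{2}a=xa$, so $aa^{\#}=xa$ comes for free), whereas you establish $a\in a^{2}R\cap Ra^{2}$ and invoke the classical criterion that this is equivalent to $a\in R^{\#}$ --- a correct and standard fact (due to Drazin), but one the paper neither proves nor cites, after which you still need the extra step $(x-a^{\#})a^{2}=0$ to identify $xa=aa^{\#}$; both routes then finish with Lemma \ref{group-aa-aab}. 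For $(1)\Leftrightarrow(3)$ the paper merely declares the proof ``similar,'' while your reduction via $x\mapsto x^{\ast}$ to statement $(2)$ for $a^{\ast}$ is a genuine proof of the symmetry and matches the device the paper itself uses to pass from Theorem \ref{equations-a} to Theorem \ref{equations-c}; this is arguably cleaner. For the ``furthermore'' part the paper gives a two-line direct computation, $(x-a^{\dagger})aa^{\dagger}=xa^{2}(a^{\dagger})^{2}-a^{\dagger}=0$, hence $x-a^{\dagger}=(x-a^{\dagger})(1-aa^{\dagger})$, while you run the $2\times 2$ matrix machinery relative to $p=aa^{\dagger}$ from (\ref{mra}), as in Theorems \ref{biao-b} and \ref{julio-biao-a}; your version is longer but makes transparent both why the Hermitian condition $(xa)^{\ast}=xa$ is automatic and how the free parameters $v\in pR(1-p)$, $z\in(1-p)R(1-p)$ assemble into $y(1-aa^{\dagger})$, and your involution trick again disposes of the set in $(3)$ without a separate computation. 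In short: the paper buys self-containedness and an explicit formula $a^{\#}=x^{2}a$; your proof buys modularity and symmetry at the cost of importing one classical external criterion.
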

\begin{proof}
(1) $\Rightarrow$ (2): The inclusion $^{\circ}(a^{2})\subseteq {}^{\circ}a$
is evident from $a\in R^\#$. For the remaining,  it is sufficient to take $x=a^\dagger=a^\#$.

(2) $\Rightarrow$ (1): Since $(ax-1)a^2 = a(xa^2)-a^2 = a^2-a^2=0$, we get
$ax-1 \in$ $^{\circ}(a^{2})\subseteq {}^{\circ}a$, hence $axa=a$. From
$$ax^2a^2 = ax(xa^2)=axa=a=xa^2$$
we get $ax^2-x \in {}^{\circ}(a^{2})\subseteq {}^{\circ}a$, hence $ax^2a=xa$. Now, we prove
$a^\# = x^2a$ by the definition of the group inverse,
$$
a(x^2a) = ax^2a = xa, \qquad (x^2a)a = x(xa^2)=xa;
$$
$$
a(x^2a)a = xa^2 = a; \qquad (x^2a)a(x^2a) = x(xa^2)x^2a = xax^2a=x(ax^2a)=x^2a.
$$
Now, $a^\# = x^2a$ and $aa^\# =ax^2a=xa$ is Hermitian. Hence (1) follows from
Lemma~\ref{group-aa-aab}.

The proof of $(1) \Leftrightarrow (3)$ is similar to the proof
of $(1)\Leftrightarrow(2)$.

%Observe that $a$ is EP. Now, let us to prove the last part of the theorem.
%Let us represent $a$, $a^\dag$, and $x$ respect to the
%projection $p=aa^\dagger$. Observe that $pa=ap=a$ and $pa^\dag = a^\dag p= a^\dagger$. We get
%$$ a=\left[\begin{matrix}
%             a & 0 \\
%             0 & 0
%     \end{matrix}
%       \right],~~~
%a^{\dagger}=\left[\begin{matrix}
%              a^{\dagger} & 0 \\
%              0 & 0
%       \end{matrix}
%       \right],$$\normalsize
%Let $x=\left[\begin{matrix}
%              u & v \\
%              w & z
%       \end{matrix}
%       \right].$\normalsize
%$~$By the proof of Theorem \ref{biao-b}, we have
%$$x=\left[\begin{matrix}
%              a^{\dagger} & v \\
%              0 & z
%       \end{matrix}
%       \right]
%.$$\normalsize
%Therefore, $x=a^{\dagger}+v+z$. Observe that $v=px(1-p)$ and $z=(1-p)x(1-p)$, hence
%$$x=a^{\dagger}+px(1-p)+(1-p)x(1-p)=a^{\dagger}+x(1-aa^{\dagger})\in \{a^{\dagger}+y(1-aa^{\dagger}): y\in R\}.$$

%Conversely, we have $aa^{\dagger}=a^{\dagger}a$ since $a$ is EP. Then
%\begin{equation*}
%\begin{split}
%~
%&[a^{\dagger}+y(1-aa^{\dagger})]a=a^{\dagger}a~\text{is Hermitian},\\
%&[a^{\dagger}+aa^{\dagger}y(1-aa^{\dagger})]a^{2}=a^{\dagger}a^{2}=a.
%\end{split}
%\end{equation*}

%The set $\{a^{\dagger}+(1-aa^{\dagger})z: z\in R\}$ can be proved similarly.
Now, let us prove the last part of the theorem.
Recall that $a$ is EP.
If $x \in R$ satisfies $xa^2=a$, then $(x-a^\dag)aa^\dag = xaa^\dag -a^\dag =
xa^2(a^\dag)^2-a^\dag = a(a^\dag)^2-a^\dag = 0$. Hence
$x-a^\dag = (x-a^\dag)(1-aa^\dag)$, which yields
$x \in \{ a^\dag + y(1-aa^\dag): y \in R\}$. Reciprocally,
it is evident that for any $y \in R$ one has that
$[a^\dag + y(1-aa^\dag)]a^2 = a$ and $[a^\dag + y(1-aa^\dag)]a$ is Hermitian.
%The affirmation concerning the primality of $R$ has the same proof as the corresponding
%in previous Theorem \ref{julio-biao-a}.
\end{proof}
%\begin{remark}
%\emph{In the proof of Theorem \ref{equations-a}, one can see that
%$a\in R^\#$ precisely when there exists $x\in R$ such that $xa^{2}=a$ and $ax^{2}=x$.
%Dually, it follows that $a\in R^\#$ if and only if there exists $y\in R$ such that $a^{2}y=a$ and $y^{2}a=y.$}
%\end{remark}

%%%%%%%%%%%%%%%%%%%%%%%%%%%%%%%%%%%%%%%%%%%%%%%%%%%%%%%%%%%%%%%%%%%%%%%%%%%%%%%%%%%%%%%%%%%%%%%%%%%%%%%%%%%%%%%%%%%%%%%%%%%%%%%%%%%%%%%%%%%%%%%%%%%%%%%%%%%%%%%%%%%%%%%%%%%%%%%%%%%%
%%%%%%%%%%%%%%%%%%%%%%%%%%%%%%%%%%%%%%%%%%%%%%%%%%%%%%%%%%%%%%%%%%%%%%%%%%%%%%%%%%%%%%%%%%%%%%%%%%%%%%%%%%%%%%%%%%%%%%%%%%%%%%%%%%%%%%%%%%%%%%%%%%%%%%%%%%%%%%%%%%%%%%%%%%%%%%%%%%%%
%%%%%%%%%%%%%%%%%%%%%%%%%%%%%%%%%%%%%%%%%%%%%%%%%%%%%%%%%%%%%%%%%%%%%%%%%%%%%%%%%%%%%%%%%%%%%%%%%%%%%%%%%%%%%%%%%%%%%%%%%%%%%%%%%%%%%%%%%%%%%%%%%%%%%%%%%%%%%%%%%%%%%%%%%%%%%%%%%%%%
%%%%%%%%%%%%%%%%%%%%%%%%%%%%%%%%%%%%%%%%%%%%%%%%%%%%%%%%%%%%%%%%%%%%%%%%%%%%%%%%%%%%%%%%%%%%%%%%%%%%%%%%%%%%%%%%%%%%%%%%%%%%%%%%%%%%%%%%%%%%%%%%%%%%%%%%%%%%%%%%%%%%%%%%%%%%%%%%%%%%
\section { \bf  When a core invertible element is an EP element}

Any EP element is core invertible, but when a core invertible element is EP?
In this section we answer this question. Let us start this section with a lemma.

\begin{lemma} \emph{\cite[Theorem 2.14]{RDD}} \label{five-equations}
An element $a \in R$ is core invertible if and only if there exists
$x \in R$ such that
$$axa=a,~~xax=x,~~(ax)^{\ast}=ax,~~xa^{2}=a~~\text{and}~~ax^{2}=x.$$
Under this equivalence, one has that $x=\core{a}$.
\end{lemma}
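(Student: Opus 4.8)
The plan is to prove both implications directly from the definition of the core inverse ($axa=a$, $xR=aR$, $Rx=Ra^\ast$), and to harvest the identification $x=\core{a}$ for free: in the forward direction $x$ is the core inverse by hypothesis, while in the converse direction I will show that any $x$ satisfying the five equations is a core inverse of $a$, so the uniqueness of the core inverse forces $x=\core{a}$.

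I would dispatch the converse direction first, as it is the cleaner half. Assume the five equations hold. The equation $axa=a$ is already the first defining condition. For $xR=aR$, note that $ax^2=x$ gives $x=ax^2\in aR$ while $xa^2=a$ gives $a=xa^2\in xR$, hence $xR=aR$. For $Rx=Ra^\ast$ I use the Hermitian equation: from $xax=x$ and $(ax)^\ast=ax$ one gets $x=x(ax)=x(ax)^\ast=xx^\ast a^\ast\in Ra^\ast$, and taking adjoints in $axa=a$ gives $a^\ast=a^\ast x^\ast a^\ast=a^\ast(ax)=a^\ast a x\in Rx$, whence $Ra^\ast=Rx$. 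Thus such an $x$ is a core inverse of $a$ and $x=\core{a}$.

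For the forward direction, let $x=\core{a}$ and translate the ideal conditions into equations: $xR=aR$ yields $x\in aR$ and $a\in xR$, while $Rx=Ra^\ast$ yields an element $s$ with $x=sa^\ast$ (together with $a^\ast\in Rx$). The equation $axa=a$ is immediate, and the one-line computation $axax=(axa)x=ax$ shows $e:=ax$ is idempotent. The main obstacle is the Hermitian equation $(ax)^\ast=ax$: here I would exploit $x=sa^\ast$, observe that $axa=a$ reads $asa^\ast a=a$, and compute $ee^\ast=(asa^\ast)(as^\ast a^\ast)=(asa^\ast a)s^\ast a^\ast=as^\ast a^\ast=e^\ast$; since $ee^\ast$ is automatically Hermitian, $e^\ast=ee^\ast=(ee^\ast)^\ast=e$, giving $(ax)^\ast=ax$. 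With $e=ax$ now a Hermitian idempotent, the equation $ax^2=x$ follows because $x\in aR$ and $a=ea$ force $x\in eR$, so $ex=x$. Finally $xax=x$ and $xa^2=a$ are slightly entangled, so I would unlock $xax=x$ first by writing $xax=x(ax)=x(ax)^\ast=xx^\ast a^\ast$ and collapsing $xx^\ast a^\ast=s(a^\ast a s^\ast a^\ast)=sa^\ast=x$ via the adjoint $a^\ast a s^\ast a^\ast=a^\ast$ of $asa^\ast a=a$; then $xa^2=a$ drops out by choosing $q$ with $a=xq$ (possible since $a\in xR$) and computing $xa^2=xa(xq)=(xax)q=xq=a$.

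The only genuinely delicate point is the Hermitian equation $(ax)^\ast=ax$; everything else is bookkeeping with the generators extracted from the ideal equalities. I expect that identity, together with the order in which the five equations are unlocked (idempotency of $ax$, then its Hermicity, then $ax^2=x$, then $xax=x$, and finally $xa^2=a$), to be the crux of the argument.
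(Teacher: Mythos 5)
Your proof is correct in both directions, but note that the paper itself offers no proof of this lemma to compare against: it is quoted verbatim from Raki\'c, Din\v{c}i\'c and Djordjevi\'c \cite[Theorem 2.14]{RDD} and used as a black box. Your argument therefore supplies a self-contained verification that the paper omits. The converse direction is the routine half, and you do it exactly as one should: $ax^{2}=x$ and $xa^{2}=a$ give $xR=aR$, while $x=x(ax)^{\ast}=xx^{\ast}a^{\ast}$ and the adjoint of $axa=a$ rewritten as $a^{\ast}=a^{\ast}ax$ give $Rx=Ra^{\ast}$, so $x$ is a core inverse and uniqueness yields $x=\core{a}$. The forward direction is where the real content lies, and your steps all check: writing $x=sa^{\ast}$ from $Rx=Ra^{\ast}$ turns $axa=a$ into $asa^{\ast}a=a$, whence $ee^{\ast}=e^{\ast}$ for $e=ax$, and since $ee^{\ast}$ is automatically Hermitian this forces $e^{\ast}=e$; then $x\in aR=eaR\subseteq eR$ gives $ex=x$, i.e.\ $ax^{2}=x$; the identity $xax=x(ax)^{\ast}=sa^{\ast}as^{\ast}a^{\ast}=sa^{\ast}=x$ follows from the adjoint $a^{\ast}as^{\ast}a^{\ast}=a^{\ast}$ of $asa^{\ast}a=a$; and $xa^{2}=a$ drops out from $a\in xR$ together with $xax=x$. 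The one stylistic remark is that your ordering (idempotency, then Hermicity of $ax$, then the remaining three equations) is essential and you handle it correctly; a reader should be warned that $ax^{2}=x$ gives $ex=x$ but not $xe=x$, which is why your separate computation for $xax=x$ is genuinely needed and not redundant.
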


Let us recall the following result.
\begin{theorem}\label{t32}
Let $a \in R$.
\begin{itemize}
\item[{\rm (1)}] {\rm \cite[Proposition 8.24]{BR}} $a$ is group invertible if and only if
there exists an idempotent $p \in R$ such that $ap=pa=0$ and $a+p$ is invertible.
Under this equivalence, we have $p=1-aa^\#$ and $a^\# = (a+p)^{-1}-p$.
\item[{\rm (2)}] {\rm \cite[Theorem 2.1]{BJ}} $a$ is EP if and only if there exists a
projection $p \in R$ such that $ap=pa=0$ and $a+p$ is invertible.
Under this equivalence, we have $p=1-aa^\dag$ and $a^\dag = (a+p)^{-1}-p$.
\end{itemize}
\end{theorem}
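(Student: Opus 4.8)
The plan is to establish part (1) first and then deduce part (2) from it together with Lemma~\ref{group-aa-aab}. For the forward implication of (1), suppose $a\in R^\#$ and set $p=1-aa^\#$. Using $aa^\#=a^\#a$ and $aa^\#a=a$, one checks at once that $p$ is idempotent and that $ap=pa=0$. To see that $a+p$ is invertible I would simply exhibit the inverse: the natural candidate is $a^\#+p$, and since $pa^\#=a^\#p=0$ (which follows from $a^\#aa^\#=a^\#$ and $aa^\#=a^\#a$), a one-line computation gives $(a+p)(a^\#+p)=aa^\#+p=1$ and likewise $(a^\#+p)(a+p)=a^\#a+p=1$. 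This simultaneously yields $a^\#=(a+p)^{-1}-p$ and $p=1-aa^\#$.

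The converse of (1) is the heart of the matter. Given an idempotent $p$ with $ap=pa=0$ and $a+p$ invertible, write $b=(a+p)^{-1}$. The key preliminary step is to show that $b$ absorbs $p$ correctly: from $ap=pa=0$ and $p^2=p$ we obtain $(a+p)p=p=p(a+p)$, and multiplying by $b$ on the appropriate side gives $bp=pb=p$. With this relation in hand I would set $x=b-p$ and verify the three defining equations of the group inverse directly. Indeed, $ax=ab$ and $xa=ba$ because $ap=pa=0$, while $ab=1-pb=1-p$ and $ba=1-bp=1-p$ follow from $(a+p)b=b(a+p)=1$; hence $ax=xa=1-p$. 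Then $axa=(1-p)a=a$, and $xax=(b-p)(1-p)=b-p=x$ using $bp=p$ and $p^2=p$. Therefore $a\in R^\#$ with $a^\#=x=(a+p)^{-1}-p$, and $aa^\#=1-p$ forces $p=1-aa^\#$.

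For part (2) I would reduce to part (1) and invoke Lemma~\ref{group-aa-aab}. If $a$ is EP, then $a^\dagger=a^\#$ and $aa^\dagger$ is Hermitian, so $p=1-aa^\dagger=1-aa^\#$ is not merely idempotent but a projection; part (1) then supplies the invertibility of $a+p$ together with $a^\dagger=a^\#=(a+p)^{-1}-p$. Conversely, if $p$ is a projection with $ap=pa=0$ and $a+p$ invertible, part (1) gives $a\in R^\#$ with $aa^\#=1-p$; since $p^\ast=p$, the element $aa^\#$ is Hermitian, so $a\in R^{\mathrm{EP}}$ by Lemma~\ref{group-aa-aab}, and consequently $a^\dagger=a^\#=(a+p)^{-1}-p$ with $p=1-aa^\dagger$. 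The only genuinely delicate point is the converse of (1): rather than checking invertibility of $a+p$ through an ad hoc candidate, one must first isolate the relation $bp=pb=p$, on which every subsequent identity depends; once that relation is extracted, the group-inverse axioms follow by direct substitution, and the Hermitian upgrade needed for (2) costs essentially nothing.
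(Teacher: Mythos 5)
Your proof is correct in every step, but it is worth noting that the paper itself supplies no proof of this theorem at all: part (1) is quoted from \cite[Proposition 8.24]{BR} and part (2) from \cite[Theorem 2.1]{BJ}, with only a remark that the $C^{*}$-algebra proof of the latter ``remains valid for unital rings with an involution.'' What you have done is make that remark concrete with a self-contained ring-theoretic argument, and your architecture is a good one. In part (1) the verification that $a^\#+p$ inverts $a+p$ (forward direction) and the extraction of the absorption identity $bp=pb=p$ from $(a+p)p=p=p(a+p)$ (converse direction) are exactly the right pivots; with $ax=xa=1-p$ in hand, the three group-inverse axioms and the identities $p=1-aa^\#$, $a^\#=(a+p)^{-1}-p$ all follow by substitution, as you show. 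Your treatment of part (2) is particularly well matched to the paper's own toolkit: rather than redoing any operator-theoretic argument from \cite{BJ}, you reduce to part (1) and upgrade via Lemma~\ref{group-aa-aab} (a group invertible $a$ is EP if and only if $aa^\#$ is Hermitian), since a projection $p$ makes $aa^\#=1-p$ Hermitian and, conversely, EP-ness makes $1-aa^\dag$ a projection. This reduction is arguably cleaner than invoking the cited $C^{*}$-algebra result, costs only the lemma the paper has already stated, and would let the paper be self-contained on this point.
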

In fact, the second item of previous result was stated for unital $C^*$-algebras, but
as one can easily check, its proof remains valid for unital rings with an involution.
We give a similar characterization of the core invertibility.

\begin{theorem}\label{t33}
Let $a \in R$. Then following are equivalent:
\begin{itemize}
\item[{\rm (1)}] $a$ is core invertible;
\item[{\rm (2)}] exists a projection $p$ such that $pa=0$ and $a(1-p)$ is invertible in the ring
$(1-p)R(1-p)$;
\item[{\rm (3)}] exists a projection $p$ such that $pa=0$ and $a(1-p)+p$ is invertible.
%\item[{\rm (4)}] exists a projection $p$ such that $pa=0$ and $a+p$ is invertible.
\end{itemize}
Under this equivalence, one has that this projection $p$ is unique and $p=1-a \core{a}$. In addition,
$$
(a(1-p))^{-1}_{(1-p)R(1-p)} = \core{a}, \quad
(a(1-p)+p)^{-1} = p+\core{a}.%, \quad
%(a+p)^{-1} = p-\core{a}ap+\core{a}.
$$
\end{theorem}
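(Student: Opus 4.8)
The plan is to establish the cycle $(1)\Rightarrow(2)\Rightarrow(1)$ together with the equivalence $(2)\Leftrightarrow(3)$, and then to read off the uniqueness of $p$ and the two displayed formulas from the constructions used. Throughout I would abbreviate $e=a\core{a}$ and $p=1-e$. The initial observation, which drives everything, is that the five defining equations of Lemma~\ref{five-equations} force $e=a\core{a}$ to be a projection: $(a\core{a})^{\ast}=a\core{a}$ is one of these equations, while $(a\core{a})^{2}=a(\core{a}a\core{a})=a\core{a}$ follows from $\core{a}a\core{a}=\core{a}$.

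For $(1)\Rightarrow(2)$ I would take $p=1-a\core{a}$. Then $pa=(1-a\core{a})a=a-a\core{a}a=0$ by $a\core{a}a=a$, and this already guarantees $a(1-p)\in(1-p)R(1-p)$, since $pa=0$ gives $(1-p)a(1-p)=a(1-p)$, so condition $(2)$ is well posed. It then remains to check that $\core{a}$ inverts $a(1-p)=a^{2}\core{a}$ inside the corner ring $eRe$. I would first verify $\core{a}\in eRe$ using $a\core{a}\cdot\core{a}=\core{a}$ (that is, $a\core{a}^{2}=\core{a}$) and $\core{a}\cdot a\core{a}=\core{a}$, and then compute $a(1-p)\cdot\core{a}=a^{2}\core{a}^{2}=e$ and $\core{a}\cdot a(1-p)=\core{a}a^{2}\core{a}=e$ with the help of $\core{a}a^{2}=a$ and $a\core{a}^{2}=\core{a}$. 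This yields the first formula $(a(1-p))^{-1}_{(1-p)R(1-p)}=\core{a}$.

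For $(2)\Rightarrow(1)$ — the direction I expect to carry the genuine content — I would let $b\in(1-p)R(1-p)$ be the corner inverse of $a(1-p)$ and show that $b$ satisfies the five equations of Lemma~\ref{five-equations}, so that $a\in\core{R}$ with $\core{a}=b$. The two facts that make everything collapse are $a=(1-p)a$ (from $pa=0$) and $b=(1-p)b=b(1-p)$ (from $b\in(1-p)R(1-p)$): together they give $ab=a(1-p)b=1-p$, a projection, which immediately delivers $(ab)^{\ast}=ab$, $aba=(1-p)a=a$, $bab=b(1-p)=b$, and $ab^{2}=(1-p)b=b$. The only equation requiring the second one-sided identity $b\,a(1-p)=1-p$ is $ba^{2}=a$, obtained from $ba^{2}=(ba)a=(ba)(1-p)a=(b\,a(1-p))a=(1-p)a=a$; this is the step I would watch most carefully, as it is precisely where the left/right sidedness of the corner inverse is exploited.

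Finally, for $(2)\Leftrightarrow(3)$ I would argue by direct multiplication. Using $p(1-p)=0$ and $pa=0$ one checks $(a(1-p)+p)(p+b)=1=(p+b)(a(1-p)+p)$, establishing $(3)$ with inverse $p+b$; conversely, writing $u=(a(1-p)+p)^{-1}$, left and right multiplication of the two inverse relations by $p$ give $pu=up=p$, whence $u-p=(1-p)u(1-p)$ inverts $a(1-p)$ in the corner, establishing $(2)$. Uniqueness of $p$ then follows because $(2)\Rightarrow(1)$ forces $1-p=ab=a\core{a}$, and $\core{a}$ is unique, so $p=1-a\core{a}$ is determined; the two displayed formulas are then exactly $b=\core{a}$ and $p+b=p+\core{a}$.
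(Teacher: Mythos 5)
Your proposal is correct and takes essentially the same route as the paper: both choose $p=1-a\core{a}$, use the five-equation characterization of Lemma \ref{five-equations} to pass between (1) and (2) (verifying exactly the same identities for the corner inverse), and obtain the uniqueness of $p$ and the displayed formulas from these computations. The only cosmetic differences are that the paper handles $(2)\Leftrightarrow(3)$ via the $2\times 2$ matrix representation relative to $p$ while you multiply out $(a(1-p)+p)(p+b)=1$ and extract the corner inverse $u-p=(1-p)u(1-p)$ directly, and the paper proves uniqueness by a separate involution computation while you read it off from $ab=1-p$ together with the uniqueness of the core inverse; both variants are sound.
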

\begin{proof}
(1) $\Rightarrow$ (2): Let $p=1-a\core{a}$, then $p$ is a projection. We use the notation $\pn=1-p=a\core{a}$.
Observe that from Lemma \ref{five-equations},
we have $a \pn =(1-p)a \pn =\pn a \pn\in \pn R \pn$ by $pa=0,$
and $\core{a}=a\core{a}\core{a}=a\core{a}\core{a}a\core{a}\in \pn R \pn.$
From $\core{a}a^2=a$ we have $\core{a}a^2 \core{a} = a \core{a}$, i.e.,
$\core{a} a \pn = \pn$. Furthermore, $a\pn \core{a} = a \core{a} = \pn$.
Therefore, $a \pn \in \pn R \pn$ is invertible in the ring
$\pn R \pn$ and its inverse is $\core{a}$.

(2) $\Leftrightarrow$ (3): %$\Leftrightarrow$ (4):
Let $p \in R$ be a projection such that $pa=0$. It is easy to verify that
\begin{equation}\label{proof_33_matrix}
a(1-p) = \left[ \begin{matrix} 0 & 0 \\ 0 & a (1-p) \end{matrix} \right], \quad
a(1-p) + p = \left[ \begin{matrix} p & 0 \\ 0 & a (1-p) \end{matrix} \right].%, \quad
%a+p = \left[ \begin{matrix} p & 0 \\ a p & a (1-p) \end{matrix} \right].
\end{equation}
Taking into account that $p$ is invertible in the ring $pRp$ (in fact, $p$ is the unity),
evidently we have that $a (1-p) \in ((1-p)R (1-p))^{-1} \Leftrightarrow a(1-p) + p \in R^{-1}$.
%\Leftrightarrow a+p \in R^{-1}$.

(2) $\Rightarrow$ (1): Let $x \in (1-p) R (1-p)$ be the inverse of $a (1-p)$ in $(1-p) R (1-p)$.
This means that $a(1-p)x = xa(1-p) = 1-p$.
Observe that $x \in (1-p) R (1-p)$ implies $(1-p)x=x(1-p)=x$, and therefore,
$ax = a(1-p)x = xa(1-p) = 1-p$. We will prove that $x=\core{a}$ by using Lemma \ref{five-equations}.
Let us recall that we can use $pa=0$ and $(1-p)a=a$ by hypothesis.
$$
axa = (ax)a = (1-p)a = a.
$$
$$
xax = x(ax) = x(1-p) = x.
$$
$$
ax = 1-p \text{ is Hermitian}.
$$
$$
xa^2 = xa(1-p)a = (1-p)a = a.
$$
$$
ax^2 = (ax)x = (1-p)x = x.
$$

Now, we shall prove the uniqueness of the projection $p$. Assume that $q$ is another projection
such that $qa = 0$ and $a(1-q)$ is invertible in $(1-q)R(1-q)$. By the proof of
(2) $\Rightarrow$ (1) we get that the inverse of $a(1-q)$ in $(1-q)R(1-q)$ is
$\core{a}$, in particular $\core{a} \in (1-q) R (1-q)$ and $a(1-q)\core{a} = 1-q$.
By using also Lemma \ref{five-equations} we get
\begin{equation}\label{proofth33}
(1-q) a \core{a} = a(1-q)\core{a} a \core{a} = a(1-q) \core{a} = 1-q.
\end{equation}
Since $\core{a} \in (1-q)R(1-q)$, exists $u \in R$ such that $\core{a}=u(1-q)$. Now,
$$
a \core{a} (1-q) = au(1-q)^2 = au(1-q)=a\core{a}.
$$
Apply involution and use Lemma \ref{five-equations} in this last equality to get
$(1-q) a\core{a} = a\core{a}$. From this last equality and (\ref{proofth33}) we obtain
$a \core{a} = 1-q$. In other words, we have proved the uniqueness of such $q$.

Finally, $(a(1-p))^{-1}_{(1-p)R(1-p)} = \core{a}$
holds in view of the proof of (2) $\Leftrightarrow$ (1).
Let us represent $a$ and $\core{a}$ with respect to the
projection $p=1-a\core{a}$. From the matrix representation of $a(1-p)+p$ given in
(\ref{proof_33_matrix}) we easily get $(a(1-p)+p)^{-1} = p+
(a(1-p))^{-1}_{(1-p)R(1-p)} = p+\core{a}$. %To prove the last equality, recall that we have proved
%$pa=p\core{a}=\core{a}p=0$. Now,
%$$
%(a+p) (p-\core{a}ap+\core{a}) = ap-a\core{a}ap+a\core{a}+p = ap-ap+a\core{a}+1-a\core{a}=1
%$$
%and by using $\core{a}a^2 = a$,
%\begin{equation*}
%\begin{split}
%(p-\core{a}ap+\core{a})(a+p) & = p -\core{a}ap + \core{a}a \\
%& = 1-a\core{a}-\core{a}a(1-a\core{a})+\core{a}a \\
%& = 1-a \core{a} + \core{a}a^2\core{a} = 1.
%\end{split}
%\end{equation*}
The proof is finished.
\end{proof}

\begin{example} \label{example111}
\emph{ If $a\in R$ is core invertible element and there exists a projection $p\in R$
such that $ap=0$, we could not get that $a\in R^{\mathrm{EP}}$. Consider the
following counterexample. Let $R$ be the ring of all complex $2\times2$ matrices. Let
$$
A=\left[\begin{matrix} 0 & 1 \\ 0 & 1 \end{matrix} \right] \quad \text{and} \quad
P=\left[\begin{matrix} 1 & 0 \\ 0 & 0 \end{matrix} \right].
$$
It is simple to prove that
$$
A^\#=\left[\begin{matrix} 0 & 1 \\ 0 & 1 \end{matrix} \right], \quad
\core{A}=\left[\begin{matrix} 1/2 & 1/2 \\ 1/2 & 1/2 \end{matrix}\right], \quad
AP=\left[\begin{matrix} 0 & 0 \\ 0 & 0 \end{matrix} \right],
$$
and $P^{2}=P=P^{\ast}$. But $A^\# \neq \core{A}$, therefore $A$ is not EP.
The projection given by Theorem~\ref{t33} is $I-A\core{A} =
\left[ \begin{smallmatrix} 1/2 & -1/2 \\ -1/2 & 1/2 \end{smallmatrix} \right]$.
This projection satisfies $(I-A\core{A})A=0$ and $A(I-A\core{A}) \neq 0$.
}
\end{example}

%Observe that in view of Theorem \ref{t32} and Theorem \ref{t33}, one has
%that $R^{\rm EP} \subseteq \core{R}$ and if $a \in \core{R}$, then
%$a \in R^{\rm EP}$ if and only if $a(1-a\core{a})=0$.

The following lemmas are necessary to prove next Theorem \ref{ep-xuyang}.
We will use the notation $[a,b]=ab-ba$ in the next lemma.

\begin{lemma} \emph{\cite[Theorem 3.1]{RDD}} \label{core-epa}
Let $a\in R$. Then the following are equivalent:
\begin{itemize}
\item[{\rm (1)}] $a\in R^{\mathrm{EP}}$;
\item[{\rm (2)}] $a\in R^{\dagger}$ and $[a,a^{\dagger}]=0$;
\item[{\rm (3)}] $a\in \core{R}$ and $[a,\core{a}]=0$;
\item[{\rm (4)}] $a\in \core{R}$ and $a^\#=\core{a}$;
\item[{\rm (5)}] $a\in R^{\dagger}\cap R^\#$ and $a^{\dagger}=\core{a}$.
\end{itemize}
\end{lemma}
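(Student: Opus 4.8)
The plan is to prove that (1) is equivalent to each of (2)--(5) separately, using as the central tool the criterion of Lemma~\ref{group-aa-aab}: $a\in R^{\mathrm{EP}}$ precisely when $a\in R^\#$ and $aa^\#$ is Hermitian. Everything then reduces to producing a group inverse and checking the Hermiticity of $aa^\#$, so the first thing I would do is build the bridge between the core inverse and the group inverse, which is the one observation doing all the work.

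From Lemma~\ref{five-equations}, every core invertible $a$ (with $x=\core a$) satisfies $axa=a$, $xax=x$, $(ax)^\ast=ax$, $xa^2=a$ and $ax^2=x$. A short computation with these five equations shows that $a$ is group invertible with $a^\#=x^2a$: indeed $a(x^2a)=(ax^2)a=xa=(x^2a)a$ gives commutativity with common value $aa^\#=a^\#a=xa$, while $a(x^2a)a=xa^2=a$ and $(x^2a)a(x^2a)=x^2(axa)=x^2a$ supply the outer two group-inverse identities. Hence I record the two facts I will reuse throughout: $a\in\core R$ forces $a\in R^\#$ with
$$aa^\#=\core a\,a,$$
and, directly from $(ax)^\ast=ax$, the element $a\core a$ is Hermitian.

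For the forward direction I would treat (1)$\Rightarrow$(2)--(5) together. Assuming $a$ is EP and setting $x=a^\dagger=a^\#$, a direct check that $x$ satisfies the five equations of Lemma~\ref{five-equations}---each identity reducing to a defining property of $a^\dagger$ or $a^\#$ together with $aa^\#=a^\#a$---shows that $a\in\core R$ with $\core a=a^\dagger=a^\#$. From this single chain of equalities all four conclusions are immediate: $[a,a^\dagger]=[a,\core a]=aa^\#-a^\#a=0$ gives (2) and (3), and $\core a=a^\#=a^\dagger$ gives (4) and (5).

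For the converses each implication routes through Lemma~\ref{group-aa-aab}, the whole point being to exhibit the Hermiticity of $aa^\#$. For (2)$\Rightarrow$(1) the element $a^\dagger$ already satisfies the three group-inverse equations (using $aa^\dagger=a^\dagger a$), so $a^\#=a^\dagger$ and $a$ is EP by definition. For (3),(4),(5) I would invoke the preliminary fact: $a\in\core R$ yields $a\in R^\#$ with $aa^\#=\core a\,a$, and then in case (3) the hypothesis $[a,\core a]=0$ turns $aa^\#=\core a\,a=a\core a$ into a Hermitian element; in case (4) the hypothesis $a^\#=\core a$ makes $aa^\#=a\core a$ Hermitian at once; in case (5) the hypothesis $a^\dagger=\core a$ gives $aa^\#=\core a\,a=a^\dagger a$, which is Hermitian as a defining property of the Moore--Penrose inverse. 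In every case Lemma~\ref{group-aa-aab} closes the argument. I expect the only genuine obstacle to be the preliminary extraction of the group inverse together with the identity $aa^\#=\core a\,a$ from the five core-inverse equations; once that identity is secured, each of the five conditions collapses to a routine verification that $aa^\#$ is Hermitian.
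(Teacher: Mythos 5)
Your proof is correct, but there is nothing in the paper to compare it against: the paper states this lemma as a quotation of \cite[Theorem 3.1]{RDD} and gives no proof of its own. What you have effectively done is supply a self-contained derivation from the two results the paper does import, namely Lemma~\ref{five-equations} and Lemma~\ref{group-aa-aab}, and every step checks out. Your key preliminary fact --- that $a\in\core{R}$ forces $a\in R^\#$ with $a^\#=(\core{a})^2a$ and $aa^\#=\core{a}a$ --- is exactly the computation the paper itself performs inside the proof of Theorem~\ref{equations-a} (and it is recorded as \cite[Theorem 2.19]{RDD} in Lemma~\ref{cxzzj}(1)); once that identity is in hand, your reduction of (3), (4) and (5) to the Hermiticity of $aa^\#$ via the Koliha--Patr\'{\i}cio criterion is clean, and your (2)$\Rightarrow$(1) correctly observes that commutativity makes $a^\dagger$ satisfy the three group-inverse equations, so $a^\#=a^\dagger$ and EP follows from the paper's definition. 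Two minor points worth making explicit if you write this up: in case (5) the hypothesis $a^\dagger=\core{a}$ tacitly presupposes $a\in\core{R}$, which is what licenses the use of the preliminary fact there (alternatively, invoke the equality $R^{\dagger}\cap\core{R}=R^{\dagger}\cap R^\#$ recalled in the introduction); and in the forward direction you should cite the uniqueness clause of Lemma~\ref{five-equations} to conclude $\core{a}=a^\dagger=a^\#$ from the verification of the five equations. Neither is a gap, just bookkeeping.
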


\begin{lemma} \label{cxzzj}
Let $a\in \core{R}$. Then
\begin{itemize}
\item[{\rm (1)}] \emph{\cite[Theorem 2.19]{RDD}}
$a\in R^\#$ and $a^\#=(\core{a})^{2}a$;
\item[{\rm (2)}] \emph{\cite[Theorem 2.18]{RDD}}
$\core{a} \in R^{\mathrm{EP}}$ and $\core{(\core{a})} =
(\core{a})^{\dagger} = (\core{a})^\# = a^2 \core{a}$. Moreover, if $a\in R^{\dagger}$,
then $\core{(a^\dag)} = (\core{a}a)^{\ast}a$.
\end{itemize}
\end{lemma}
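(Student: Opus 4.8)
The plan is to prove all three parts by direct verification, writing $x=\core a$ and using throughout the five identities supplied by Lemma~\ref{five-equations}: $axa=a$, $xax=x$, $(ax)^\ast=ax$, $xa^2=a$ and $ax^2=x$. Two immediate consequences will be used constantly, namely $x^2a^2=x(xa^2)=xa$ and $a^2x^2=a(ax^2)=ax$.

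For part (1) I would test $b=x^2a$ against the three group-inverse equations. The key observation is that $ab=(ax^2)a=xa$ and $ba=x^2a^2=xa$, so $b$ commutes with $a$ with $ab=ba=xa$; then $aba=(xa)a=xa^2=a$ and $bab=(xa)(x^2a)=x(ax^2)a=x^2a=b$. Hence $a\in R^\#$ with $a^\#=(\core a)^2a$.

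For part (2) I would symmetrically test $c=a^2x$ as the group inverse of $x$. Here $xc=(xa^2)x=ax$ and $cx=a^2x^2=ax$, so $xc=cx=ax$, and then $xcx=(ax)x=ax^2=x$ and $cxc=(ax)(a^2x)=a(xa^2)x=a^2x=c$. Thus $x\in R^\#$ with $x^\#=a^2x$ and, crucially, $xx^\#=ax$, which is Hermitian by the third identity. Lemma~\ref{group-aa-aab} now gives $\core a=x\in R^{\mathrm{EP}}$, and for an EP element Lemma~\ref{core-epa} identifies the three inverses, so $\core{(\core a)}=(\core a)^\dagger=(\core a)^\#=a^2\core a$.

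The moreover is the main obstacle. Assume $a\in R^\dagger$ and set $z=(\core a\,a)^\ast a=a^\ast x^\ast a$; I would prove $z=\core{(a^\dagger)}$ by checking the five equations of Lemma~\ref{five-equations} with $a^\dagger$ in the role of $a$ and $z$ in the role of $x$. The verification rests on two bridging identities linking $\core a$ to $a^\dagger$. First, $a\core a=ax$ and $aa^\dagger$ are both Hermitian idempotents --- the former by $axa=a$ and $(ax)^\ast=ax$, the latter by the Moore--Penrose axioms --- and each has range $aR$; since a Hermitian idempotent is determined by its range, $a\core a=aa^\dagger$. Second, from the defining relation $R\core a=Ra^\ast$ we may write $\core a=sa^\ast$, and since $a^\ast aa^\dagger=a^\ast(aa^\dagger)^\ast=(aa^\dagger a)^\ast=a^\ast$ we get $\core a\,aa^\dagger=\core a$, equivalently $aa^\dagger x^\ast=x^\ast$. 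With these in hand, together with the companion Moore--Penrose identity $a^\dagger aa^\ast=(a^\dagger a)^\ast a^\ast=(aa^\dagger a)^\ast=a^\ast$, the heart of the argument is the collapse $a^\dagger z=a^\dagger a$ (the Hermitian idempotent $a^\dagger a$). Granting it, $(a^\dagger z)^\ast=a^\dagger z$, $a^\dagger z a^\dagger=(a^\dagger a)a^\dagger=a^\dagger$, and $a^\dagger z^2=(a^\dagger a)z=z$ are immediate, and only $za^\dagger z=z$ and $z(a^\dagger)^2=a^\dagger$ remain as short checks. I expect the organization of the product $a^\ast x^\ast a$ needed to see $a^\dagger z=a^\dagger a$ --- the interplay of $x$, $x^\ast$, $a$, $a^\ast$, $a^\dagger$ --- to be the only delicate point; everything else is routine manipulation of the identities above.
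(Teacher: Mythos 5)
The paper gives no proof of this lemma at all---it is simply quoted from \cite[Theorems 2.18 and 2.19]{RDD}---so the only question is whether your self-contained verification holds up. Parts (1) and (2) do: with $x=\core{a}$, your checks that $b=x^{2}a$ satisfies the three group-inverse equations for $a$, and that $c=a^{2}x$ satisfies them for $x$ with $xc=cx=ax$ Hermitian, are complete and correct, and Lemma \ref{group-aa-aab} together with Lemma \ref{core-epa} then yields $\core{a}\in R^{\mathrm{EP}}$ and $\core{(\core{a})}=(\core{a})^{\dagger}=(\core{a})^{\#}=a^{2}\core{a}$. Your two bridging identities $a\core{a}=aa^{\dagger}$ (two Hermitian idempotents generating the same right ideal coincide) and $\core{a}\,aa^{\dagger}=\core{a}$ are also correctly derived.

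The gap is in the ``moreover'' part: the collapse $a^{\dagger}z=a^{\dagger}a$, which you yourself call the heart of the argument, is never proved---you only record that you expect it. It does follow from the tools you assembled, but it needs an actual computation: since $ax$ is Hermitian and $xa^{2}=a$,
\begin{equation*}
(ax)(xa)^{\ast}=(ax)^{\ast}(xa)^{\ast}=\bigl((xa)(ax)\bigr)^{\ast}=(xa^{2}x)^{\ast}=(ax)^{\ast}=ax,
\end{equation*}
whence $a^{\dagger}z=(a^{\dagger}aa^{\dagger})(xa)^{\ast}a=a^{\dagger}(ax)(xa)^{\ast}a=a^{\dagger}(ax)a=a^{\dagger}a$. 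Moreover, of the two checks you defer as routine, $za^{\dagger}z=z(a^{\dagger}z)=a^{\ast}x^{\ast}(aa^{\dagger}a)=z$ is indeed immediate once the collapse is known, but $z(a^{\dagger})^{2}=a^{\dagger}$ is \emph{not} a consequence of the collapse: you need $x^{\ast}aa^{\dagger}=x^{\ast}$ (the involution of $aa^{\dagger}x=x$, which follows from $x=(ax)x=aa^{\dagger}x$; note this is different from your identity $aa^{\dagger}x^{\ast}=x^{\ast}$) to reduce $z(a^{\dagger})^{2}$ to $(xa)^{\ast}a^{\dagger}$, and then the separate identity $(xa)^{\ast}a^{\dagger}=a^{\dagger}$, obtained by applying the involution to $(a^{\dagger})^{\ast}xa=(a^{\dagger})^{\ast}a^{\dagger}(axa)=(a^{\dagger})^{\ast}$, where $(a^{\dagger})^{\ast}=(a^{\dagger})^{\ast}a^{\dagger}a$. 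So your strategy is sound and every step can be filled, but as written the crux identity and one genuinely non-routine verification are missing.
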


\begin{theorem} \label{ep-xuyang}
Let $a\in R$. Then the following are equivalent:
\begin{itemize}
\item[{\rm (1)}] $a \in R^{\mathrm{EP}}$;
\item[{\rm (2)}] $a \in \core{R}$ and $(\core{a}a)^{\ast} = \core{a}a$;
\item[{\rm (3)}] $a \in \core{R}$ and $\core{(\core{a})} = a$;
\item[{\rm (4)}] $a \in \core{R}$ and $(\core{a})^{\dagger}=a$;
\item[{\rm (5)}] $a \in \core{R}$ and $(\core{a})^\#=a$;
\item[{\rm (6)}] $a\in R^{\dagger}\cap R^\#$ and $\core{(a^{\dagger})} = a$;
\item[{\rm (7)}] $a\in R^{\dagger}\cap R^\#$ and $\core{(a^{\dagger})} = (\core{a})^{\dagger}$;
\item[{\rm (8)}] $a\in \core{R}$ and $ap=0$, where $p = 1-a\core{a}$.
\end{itemize}
\end{theorem}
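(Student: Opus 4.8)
The plan is to anchor every statement to (1) by first extracting the algebraic identities that the two core-inverse lemmas supply, and then reading each condition off those identities. First I would record, for any $a\in\core{R}$, the consequences of Lemma \ref{cxzzj}: from part (1) one has $a\in R^{\#}$ with $a^{\#}=(\core{a})^{2}a$, and since $a(\core{a})^{2}=\core{a}$ (one of the defining identities of Lemma \ref{five-equations}) this gives the key relation $aa^{\#}=a(\core{a})^{2}a=\core{a}a$; from part (2) one has $\core{(\core{a})}=(\core{a})^{\dagger}=(\core{a})^{\#}=a^{2}\core{a}$, and, when $a\in R^{\dagger}$ as well, $\core{(a^{\dagger})}=(\core{a}a)^{\ast}a$. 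With $aa^{\#}=\core{a}a$ in hand, the equivalence (1)$\Leftrightarrow$(2) is immediate from Lemma \ref{group-aa-aab}: $a$ is EP iff $a\in R^{\#}$ with $aa^{\#}$ Hermitian, i.e. iff $\core{a}a$ is Hermitian.

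Next I would collapse (3), (4), (5) and (8) into a single condition. The formula $\core{(\core{a})}=(\core{a})^{\dagger}=(\core{a})^{\#}=a^{2}\core{a}$ shows at once that each of (3), (4), (5) is equivalent to ``$a\in\core{R}$ and $a^{2}\core{a}=a$''; and since $ap=a-a^{2}\core{a}$ for $p=1-a\core{a}$, condition (8) is literally the same statement. To link this cluster to (1): if $a$ is EP then $\core{a}=a^{\#}$ by Lemma \ref{core-epa}, so $a^{2}\core{a}=a^{2}a^{\#}=a$; conversely, if $a^{2}\core{a}=a$ then $p=1-a\core{a}$ is a projection with $pa=0$ (automatic) and $ap=a-a^{2}\core{a}=0$, while Theorem \ref{t33} guarantees $a(1-p)+p=a+p$ is invertible, so Theorem \ref{t32}(2) yields that $a$ is EP.

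The final and most delicate part is (6) and (7), where I would use the explicit expression $\core{(a^{\dagger})}=(\core{a}a)^{\ast}a$ (valid since $R^{\dagger}\cap R^{\#}=R^{\dagger}\cap\core{R}$). The forward directions are easy: when $a$ is EP one has $\core{a}=a^{\dagger}=a^{\#}$ with $\core{a}a$ Hermitian, so $(\core{a}a)^{\ast}a=\core{a}a^{2}=a$ and $a^{2}\core{a}=a$, giving (6) and (7). For the converses I would set $e=\core{a}a\ (=aa^{\#})$ and $f=a\core{a}$, noting that $e$ is idempotent with $e\in aR$ and $f$ is a projection, and aim to prove $e$ is Hermitian, which returns us to (2). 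For (6) the hypothesis reads $e^{\ast}a=a$; combined with $ea=a$ this gives $(e-e^{\ast})a=0$, and right-multiplying by an element of $R$ realizing $e\in aR$ yields $(e-e^{\ast})e=0$, i.e. $e=e^{\ast}e$, whose adjoint forces $e^{\ast}=e$. For (7) the hypothesis reads $e^{\ast}a=a^{2}\core{a}$; right-multiplying by $\core{a}$ and simplifying with $a^{2}(\core{a})^{2}=a\core{a}=f$ gives $e^{\ast}f=f$, whose adjoint is $fe=f$, while a direct computation gives $fe=a(\core{a})^{2}a=\core{a}a=e$, so $e=f$ is Hermitian. I expect these idempotent manipulations in the converses of (6) and (7) to be the main obstacle, since they require choosing exactly the right one-sided multiplications and exploiting the Hermitian symmetry of $f$ to force $e=e^{\ast}$ (equivalently $e=f$); everything else follows mechanically from the two core-inverse lemmas and Lemma \ref{group-aa-aab}.
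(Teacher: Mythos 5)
Your proposal is correct, and in several places it takes a genuinely different route from the paper. For $(1)\Leftrightarrow(2)$ the paper verifies that $\core{a}$ satisfies the four Moore--Penrose equations and concludes $a^{\dagger}=\core{a}$, then cites Lemma \ref{core-epa}; you instead use the identity $aa^{\#}=a(\core{a})^{2}a=\core{a}a$ together with Lemma \ref{group-aa-aab}, which works just as well (note only that the forward direction still needs Lemma \ref{core-epa} to know EP implies $a\in\core{R}$ before $\core{a}a$ can be written). For the cluster $(3),(4),(5),(8)$ your treatment is tidier than the paper's: you collapse all four to the single identity $a^{2}\core{a}=a$ via Lemma \ref{cxzzj} and then invoke Theorems \ref{t32} and \ref{t33} once, whereas the paper proves $(1)\Leftrightarrow(3)$ by a separate commutation computation ($a=a^{2}\core{a}$ implies $\core{a}a=\core{a}a^{2}\core{a}=a\core{a}$, then Lemma \ref{core-epa}) and handles $(8)$ separately. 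The sharpest divergence is in the converses of $(6)$ and $(7)$: the paper proves $(6)\Rightarrow(7)$ by reading the five equations of Lemma \ref{five-equations} for the element $a^{\dagger}$ with core inverse $a$ and then appealing to its own Theorem \ref{equations-a}, and proves $(7)\Rightarrow(1)$ by applying the involution to $a^{2}\core{a}=(\core{a}a)^{\ast}a$ and running a long chain of identities ending in $[a,\core{a}]=0$; your arguments stay entirely inside idempotent algebra with $e=\core{a}a$ and $f=a\core{a}$ --- for $(6)$, $(e-e^{\ast})a=0$ and $e\in aR$ force $e=e^{\ast}e$, hence $e^{\ast}=e$; for $(7)$, right-multiplication by $\core{a}$ gives $e^{\ast}f=f$, whose adjoint $fe=f$ combines with the direct computation $fe=a(\core{a})^{2}a=e$ to give $e=f$ --- and both reduce to condition $(2)$. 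Your version is shorter and more uniform (everything funnels into Hermitianness of $\core{a}a$), while the paper's version exhibits the utility of its three-equation characterization Theorem \ref{equations-a}; both are complete proofs.
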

\begin{proof}
$(1)\Leftrightarrow(2)$: Suppose $a\in R^{\mathrm{EP}}$. Then by Lemma \ref{core-epa},
we have $a^{\dagger} = \core{a}$. Thus $(a^{\dagger}a)^{\ast}=a^{\dagger}a$ implies
$(\core{a}a)^{\ast} = \core{a}a$. Conversely, suppose $a\in \core{R}$ and $(\core{a}a)^{\ast}
= \core{a}a$. By Lemma \ref{five-equations}, we have $a \core{a} a = a$, $\core{a} a \core{a}
= \core{a}$ and $(a\core{a})^{\ast} = a\core{a}$. Thus by the definition of Moore-Penrose
inverse, we have $a^{\dagger} = \core{a}$. Hence by Lemma~\ref{core-epa}, we have
$a\in R^{\mathrm{EP}}$.

$(1)\Leftrightarrow(3)$: Suppose $a\in R^{\mathrm{EP}}$. Then by Lemma~\ref{core-epa}, we have
$[a,\core{a}]=0$. By Lemma~\ref{cxzzj}, we have $\core{(\core{a})} =  a^2 \core{a}$. Thus
$\core{(\core{a})} = a^2 \core{a} = a(a\core{a}) = a(\core{a}a)=a$. Conversely, suppose
$\core{(\core{a})} = a$. By Lemma~\ref{cxzzj}, we have $a  = \core{(\core{a})} = a^2 \core{a}$.
Thus
$$
\core{a}a = \core{a} a^2 \core{a} = a\core{a}.
$$
Therefore $a\in R^{\mathrm{EP}}$ by Lemma \ref{core-epa}.

$(3)\Leftrightarrow(4)\Leftrightarrow(5)$ is clear by Lemma \ref{cxzzj}.

$(1)\Rightarrow(6)$: By Lemma \ref{core-epa} and Lemma \ref{cxzzj}, we have
$\core{(a^\dagger)} = (\core{a}a)^{\ast}a =(a^{\dagger}a)^{\ast}a=a^{\dagger}a^{2}=a^\#a^2=a$.

$(6)\Rightarrow(7)$: Suppose that $a\in R^{\tiny\textcircled{\tiny\#}}$ and $(a^{\dagger})^{\tiny\textcircled{\tiny\#}}=a$. Then by Lemma~ \ref{five-equations},
we have $a^{\dagger}a^{2}=a$ and $a(a^{\dagger})^{2}=a^{\dagger}$.
Since $a^\dag a$ is Hermitian, an appeal to Theorem~\ref{equations-a}
leads to $a \in R^{\rm EP}$. Hence $\core{a}=a^\#$. By Lemma \ref{cxzzj}, we have
$(\core{a})^{\dagger}=a^2 \core{a}=a^2 a^\#=a$. Thus by $\core{(a^{\dagger})}=a$, we have
$\core{(a^{\dagger})}=(\core{a})^{\dagger}$.

$(7)\Rightarrow (1)$: Suppose that $a\in \core{R}$ and $\core{(a^{\dagger})} =
(\core{a})^{\dagger}$. By Lemma \ref{cxzzj}, we have $(\core{a})^{\dagger} = a^2 \core{a}$ and
$\core{(a^{\dagger})} = (\core{a}a)^{\ast}a$. Thus by  $\core{(a^{\dagger})} =
(\core{a})^{\dagger}$, we have
\begin{eqnarray}\label{xu-yang3}
a^{2} \core{a} = (\core{a} a)^{\ast}a.
\end{eqnarray}
Taking involution on (\ref{xu-yang3}), we have
$a^{\ast} \core{a}a = (a\core{a})^{\ast}a^{\ast} = a\core{a}a^{\ast}$.
Thus by Lemma~\ref{five-equations}, we have $\core{a}a = a (\core{a})^2 a = (a \core{a})\core{a}a =
(a \core{a})^* \core{a}a =  (\core{a})^{\ast}a^{\ast}\core{a}a
= (\core{a})^* a \core{a} a^* = (\core{a})^* (a \core{a})^* a^* =
(a^2 (\core{a})^2)^* = (a \core{a})^* = a\core{a}$.
%By $(a^{\tiny\textcircled{\tiny\#}})^{\ast}a^{\ast}a^{\tiny\textcircled{\tiny\#}}a=a^{\tiny\textcircled{\tiny\#}}a$
%and $(a^{\tiny\textcircled{\tiny\#}})^{\ast}aa^{\tiny\textcircled{\tiny\#}}a^{\ast}=aa^{\tiny\textcircled{\tiny\#}}.$
That is,  $[a,\core{a}]=0$, therefore $a\in R^{\mathrm{EP}}$ by Lemma \ref{core-epa}.

$(8)\Leftrightarrow(1)$ follows
%If $a$ is an EP element, then $ap=0$ is clear by $aa^{\tiny\textcircled{\tiny\#}}=a^{\tiny\textcircled{\tiny\#}}a$.
%Conversely, assume that $(8)$ holds. Then $a=a^{2}a^{\tiny\textcircled{\tiny\#}}
%=a(aa^{\tiny\textcircled{\tiny\#}})^{\ast}=a(a^{\tiny\textcircled{\tiny\#}})^{\ast}a^{\ast}\in Ra^{\ast}.$
%As a core invertible element is group invertible, thus $a\in R^{\mathrm{EP}}$ by Theorem \ref{ep-core-baohan}.
by Theorem \ref{t32} and Theorem \ref{t33}.
\end{proof}

In the following Theorem \ref{ep-core-baohan}, we show that the equality
$aR=a^{\ast}R$ in Lemma \ref{mp-sharpEPinvo} can be
replaced by the weaker inclusions $aR\subseteq a^{\ast}R$ or $a^{\ast}R\subseteq aR$.

\begin{lemma}  \emph{\cite[Proposition 2]{PP}} \label{mp-sharpEPinvo}
Let $a\in R$. Then the following are equivalent:
\begin{itemize}
\item[{\rm (1)}] $a\in R^{\mathrm{EP}}$;
\item[{\rm (2)}] $a\in R^\#$ and $aR=a^{\ast}R$;
\item[{\rm (3)}] $a\in R^\#$ and $Ra=Ra^{\ast}$;
\item[{\rm (4)}] $a\in R^{\dagger}$ and $aR=a^{\ast}R$;
\item[{\rm (5)}] $a\in R^{\dagger}$ and $Ra=Ra^{\ast}$.
\end{itemize}
\end{lemma}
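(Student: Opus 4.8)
The plan is to split the five statements into the two involution-related pairs $\{(2),(3)\}$ and $\{(4),(5)\}$, and then tie each pair to (1). The workhorse is an elementary fact about idempotents: if $e,f\in R$ are idempotents with $eR=fR$, then $e=fr$ forces $fe=f^2r=fr=e$, and symmetrically $ef=f$; when moreover $e=e^\ast$ and $f=f^\ast$, taking the adjoint of $e=fe$ gives $e=ef=f$. I will use this twice, once with two Hermitian idempotents and once by symmetrizing a possibly non-Hermitian one.

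First, the equivalences $(2)\Leftrightarrow(3)$ and $(4)\Leftrightarrow(5)$ come for free: applying the involution to a one-sided ideal gives $(aR)^\ast=Ra^\ast$ and $(a^\ast R)^\ast=Ra$, so $aR=a^\ast R$ holds if and only if $Ra=Ra^\ast$, with no regularity needed, and the membership clauses $a\in R^\#$, $a\in R^\dagger$ are untouched. For $(1)\Rightarrow(2)$ and $(1)\Rightarrow(4)$ I would use that an EP element lies in $R^\#\cap R^\dagger$ and that $p:=aa^\dagger=a^\dagger a$ is a projection with $a=ap=pa$. From $a=pa$ and $p=aa^\dagger\in aR$ one gets $aR=pR$; from $a^\ast=pa^\ast$ and $p=a^\dagger a=a^\ast(a^\dagger)^\ast\in a^\ast R$ one gets $a^\ast R=pR$. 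Hence $aR=a^\ast R$, and both (2) and (4) follow since EP already provides the membership clauses.

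For the converse $(4)\Rightarrow(1)$, with $a\in R^\dagger$ the Hermitian idempotents $e=aa^\dagger$ and $f=a^\dagger a$ satisfy $eR=aR$ and $fR=a^\ast R$ (the latter because $f=a^\ast(a^\dagger)^\ast\in a^\ast R$ and $a^\ast=a^\ast(a^\dagger)^\ast a^\ast\in fR$); the hypothesis $aR=a^\ast R$ forces $eR=fR$, hence $e=f$, i.e.\ $[a,a^\dagger]=0$, and Lemma~\ref{core-epa} yields $a\in R^{\mathrm{EP}}$.

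The delicate step is $(2)\Rightarrow(1)$, since the group-inverse idempotent $e=aa^\#=a^\#a$ need not be Hermitian. Here I would compare $e$ with $e^\ast$: one has $eR=aR$, while from $a^\ast\in R^\#$ with $(a^\ast)^\#=(a^\#)^\ast$ one computes $a^\ast(a^\ast)^\#=(a^\#a)^\ast=e^\ast$, so $e^\ast R=a^\ast R$. The hypothesis $aR=a^\ast R$ then gives $eR=e^\ast R$; the idempotent fact yields $e^\ast e=e$, and taking adjoints of $e^\ast e=e$ gives $e^\ast e=e^\ast$, whence $e=e^\ast$. Thus $aa^\#$ is Hermitian and Lemma~\ref{group-aa-aab} gives $a\in R^{\mathrm{EP}}$. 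Collecting these arrows closes both cycles through (1). I expect this last implication to be the main obstacle, precisely because passing from the group inverse (whose idempotent carries no built-in symmetry) to the Hermitian condition of Lemma~\ref{group-aa-aab} requires the extra symmetrization $e=e^\ast$ extracted from the ideal equality rather than supplied directly by the hypotheses.
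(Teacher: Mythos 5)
Your proof is correct, and all six arrows you assemble do close both cycles through (1): the idempotent fact is applied correctly, the identities $eR=aR$, $fR=a^{*}R$, $e^{*}R=a^{*}R$ all check out (in particular $a^{*}=a^{*}(a^{\dagger})^{*}a^{*}=(a^{\dagger}a)^{*}a^{*}=fa^{*}$ and $(a^{*})^{\#}=(a^{\#})^{*}$), and the symmetrization step $e^{*}e=e \Rightarrow e^{*}e=e^{*} \Rightarrow e=e^{*}$ is valid because $e^{*}e$ is automatically Hermitian.

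One point of comparison: the paper does not prove this lemma at all --- it imports it as Proposition 2 of the cited reference [PP] --- so your argument is a genuinely self-contained alternative, and it uses only tools already present in the paper (Lemma~\ref{group-aa-aab} for the group-inverse direction, Lemma~\ref{core-epa} for the Moore--Penrose direction) plus the elementary fact that idempotents generating the same right ideal absorb each other. It is worth noting that the paper's own Theorem~\ref{ep-core-baohan} later proves a statement stronger than your hardest step $(2)\Rightarrow(1)$: there, only the inclusion $aR\subseteq a^{*}R$ is assumed, and the argument is a short direct computation ($a=a^{*}r$ gives $a=(a^{\#}a)^{*}a$, hence $aa^{\#}=(a^{\#}a)^{*}a^{\#}a$ is Hermitian) rather than your ideal-theoretic route via $eR=e^{*}R$. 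Your approach buys a uniform mechanism (equality of idempotents with the same column space) that handles $(2)\Rightarrow(1)$ and $(4)\Rightarrow(1)$ by the same lemma; the computational route buys a sharper hypothesis. Both are sound, and since the paper derives Theorem~\ref{ep-core-baohan} from the present lemma, your independent proof introduces no circularity.
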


%In fact, the condition $a \in R^\#$ can be changed by $a \in R^\dagger$.
%First of all, let us notice that
%if $p,q$ are two projections such that $Rp=Rq$, then $p=q$ (the proof is easy:
%since $p \in Rq$, we get $p=tq$ for some $t \in R$, and now, $pq=p$; reversing
%the roles of $p,q$ we get $qp=q$; finally, by involution, $pq=q$). Also, let
%us remark that it is well known that $Ra^*=Ra^\dagger$ and $a^* R = a^\dag R$. If $a$ is EP
%(i.e., $aa^\dag = a^\dag a$), then $a = aa^\dag a = a^2 a^\dag \in Ra^\dag = Ra^*$
%and $a^* = (aa^\dag a)^* = a^* aa^\dag = a^* a^\dag a \in Ra$.
%If $a \in R^\dagger$ and $Ra=Ra^*$, then
%$Ra^\dag a \subseteq Ra = Ra^* = Ra^\dag = R a^\dag a a^\dag \subseteq Raa^\dag$
%and $Raa^\dagger \subseteq Ra^\dag = Ra^* = Ra = Raa^\dag a \subset Ra^\dag a$, and
%therefore, $aa^\dagger=a^\dag a$.

\begin{theorem}  \label{ep-core-baohan}
Let $a\in R$. Then the following are equivalent:
\begin{itemize}
\item[{\rm (1)}] $a\in R^{\mathrm{EP}}$;
\item[{\rm (2)}] $a\in R^\#$ and $aR\subseteq a^{\ast}R$;
\item[{\rm (3)}] $a\in R^\#$ and $Ra\subseteq Ra^{\ast}$;
\item[{\rm (4)}] $a\in R^\#$ and $a^{\ast}R\subseteq aR$;
\item[{\rm (5)}] $a\in R^\#$ and $Ra^{\ast}\subseteq Ra$.
\end{itemize}
\end{theorem}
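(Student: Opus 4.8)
The plan is to derive the theorem from the two quoted results Lemma~\ref{group-aa-aab} (a group invertible $a$ is EP precisely when $aa^\#$ is Hermitian) and Lemma~\ref{mp-sharpEPinvo}. The implication $(1)\Rightarrow(2),(3),(4),(5)$ costs nothing: by Lemma~\ref{mp-sharpEPinvo} an EP element is group invertible and satisfies the \emph{equalities} $aR=a^{\ast}R$ and $Ra=Ra^{\ast}$, so all four inclusions hold a fortiori. The substance of the theorem lies entirely in the converse statements, namely that each weakened one-sided inclusion, together with $a\in R^\#$, already forces $a$ to be EP.

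First I would halve the work by applying the involution directly to the set inclusions. Since $\ast$ is a bijective anti-automorphism, $(aR)^{\ast}=Ra^{\ast}$ and $(a^{\ast}R)^{\ast}=Ra$, so $aR\subseteq a^{\ast}R$ holds if and only if $Ra^{\ast}\subseteq Ra$; this gives $(2)\Leftrightarrow(5)$, and symmetrically $(3)\Leftrightarrow(4)$. It then suffices to prove $(2)\Rightarrow(1)$ and $(4)\Rightarrow(1)$.

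For these I would pass to the idempotent $e=aa^\#=a^\#a$. Group invertibility gives $ae=ea=a$, hence $aR=eR$, and applying the involution to $a=ea$ (together with $e^{\ast}=(a^\#)^{\ast}a^{\ast}$) yields $a^{\ast}R=e^{\ast}R$; the left-ideal analogues $Ra=Re$, $Ra^{\ast}=Re^{\ast}$ come out the same way. By Lemma~\ref{group-aa-aab}, what remains to be shown is simply $e=e^{\ast}$. Each hypothesis now becomes an inclusion between the principal right ideals of the idempotents $e$ and $e^{\ast}$, and for idempotents such an inclusion is a bare identity: $eR\subseteq e^{\ast}R$ is equivalent to $e^{\ast}e=e$ (write $e=e^{\ast}r$ and compute $e^{\ast}e=(e^{\ast})^{2}r=e^{\ast}r=e$), while $e^{\ast}R\subseteq eR$ is equivalent to $ee^{\ast}=e^{\ast}$.

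The one genuinely clever step — and the point I expect to be the crux — is that such an identity improves itself under the involution. Applying $\ast$ to $e^{\ast}e=e$ gives $(e^{\ast}e)^{\ast}=e^{\ast}$; but $(e^{\ast}e)^{\ast}=e^{\ast}(e^{\ast})^{\ast}=e^{\ast}e$, so $e^{\ast}e=e^{\ast}$, and comparing with $e^{\ast}e=e$ forces $e=e^{\ast}$. The identical one-line manoeuvre sends $ee^{\ast}=e^{\ast}$ to $e=e^{\ast}$. In every case $aa^\#$ is thus Hermitian, so $a$ is EP by Lemma~\ref{group-aa-aab}. The only bookkeeping I anticipate needing care is the clean justification of $a^{\ast}R=e^{\ast}R$ (and its left analogue) from $a=ae=ea$ and $e^{\ast}=(a^\#)^{\ast}a^{\ast}$; after that the idempotent-plus-involution trick dispatches all four directions uniformly.
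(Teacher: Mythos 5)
Your proposal is correct and takes essentially the same route as the paper: both arguments use the ideal inclusion together with group invertibility to produce an identity of the form $aa^\#=(aa^\#)^{\ast}(aa^\#)$, whose right-hand side is automatically Hermitian, and then conclude via Lemma~\ref{group-aa-aab}. Your repackaging---halving the cases through the involution duality $(2)\Leftrightarrow(5)$, $(3)\Leftrightarrow(4)$ and working with the idempotent $e=aa^\#$---is a tidier, more uniform organization of what the paper carries out explicitly for case (2) and declares ``similar'' for the rest, but the key lemma and the crux computation are identical.
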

\begin{proof}
$(1)\Rightarrow(2)$--$(5)$ is obvious by Lemma \ref{mp-sharpEPinvo}.

$(2)\Rightarrow(1)$: By $aR\subseteq a^{\ast}R$, we have $a=a^{\ast}r$ for some $r\in R$, then
$a=(aa^\#a)^{\ast}r=(a^\#a)^{\ast}a^{\ast}r=(a^\#a)^{\ast}a$.
Thus $a^\#a=aa^\#=(a^\#a)^{\ast}aa^\#=(a^\#a)^{\ast}a^\#a$, which gives $(a^\#a)^{\ast}=a^\#a.$
Therefore $a\in R^{\rm EP}$ by the definition of EP element.

$(3)$--$(5)\Rightarrow(1)$ is similar to $(2)\Rightarrow(1)$.
\end{proof}

\begin{corollary}  \label{ep-core-baohan-cor}
Let $a\in R$. Then the following are equivalent:
\begin{itemize}
\item[{\rm (1)}] $a\in R^{\mathrm{EP}}$;
\item[{\rm (2)}] $a\in \core{R}$ and $aR\subseteq a^{\ast}R$;
\item[{\rm (3)}] $a\in \core{R}$ and $Ra\subseteq Ra^{\ast}$;
\item[{\rm (4)}] $a\in \core{R}$ and $a^{\ast}R\subseteq aR$;
\item[{\rm (5)}] $a\in \core{R}$ and $Ra^{\ast}\subseteq Ra$.
\end{itemize}
\end{corollary}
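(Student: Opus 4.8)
The plan is to reduce this corollary directly to Theorem \ref{ep-core-baohan}, exploiting the fact that the only difference between the two statements is that the hypothesis ``$a\in R^\#$'' has been strengthened to ``$a\in\core{R}$''. The bridge I would use is Lemma \ref{cxzzj}(1), which tells us that every core invertible element is group invertible (indeed $a^\#=(\core{a})^2a$). Thus conditions (2)--(5) here each \emph{imply} the corresponding condition in Theorem \ref{ep-core-baohan}, while for the forward direction I need the reverse containment between the two classes.

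First I would handle $(1)\Rightarrow(2)$--$(5)$. If $a\in R^{\mathrm{EP}}$, then $a\in\core{R}$: this is immediate from Lemma \ref{core-epa}, since (1) there is equivalent to (3), which asserts $a\in\core{R}$. For the annihilator/ideal inclusions, I would invoke Lemma \ref{mp-sharpEPinvo}, by which $a\in R^{\mathrm{EP}}$ gives the \emph{equalities} $aR=a^\ast R$ and $Ra=Ra^\ast$; each of the four inclusions in (2)--(5) is a trivial consequence of one of these equalities. So all four of (2)--(5) follow at once.

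For the converses $(2)\Rightarrow(1)$, $(3)\Rightarrow(1)$, $(4)\Rightarrow(1)$ and $(5)\Rightarrow(1)$, I would argue uniformly. Suppose $a\in\core{R}$ together with one of the inclusions, say $aR\subseteq a^\ast R$. By Lemma \ref{cxzzj}(1) we have $a\in R^\#$. Hence $a\in R^\#$ and $aR\subseteq a^\ast R$, which is exactly hypothesis (2) of Theorem \ref{ep-core-baohan}; by the implication $(2)\Rightarrow(1)$ there, $a\in R^{\mathrm{EP}}$. The remaining three converses are identical, each feeding the relevant inclusion into the matching branch of Theorem \ref{ep-core-baohan}.

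There is no real obstacle here: the content of the corollary is entirely carried by Theorem \ref{ep-core-baohan} and the two structural facts ``EP $\Rightarrow$ core invertible'' (Lemma \ref{core-epa}) and ``core invertible $\Rightarrow$ group invertible'' (Lemma \ref{cxzzj}(1)). The only point worth stating carefully is that the strengthened hypothesis $a\in\core{R}$ is compatible with both directions precisely because it sits between the two; it is strong enough to recover $a\in R^\#$ for the converses, and weak enough that EP elements satisfy it for the forward direction. Consequently the proof reduces to one line per implication, and I would write it in that compressed form.
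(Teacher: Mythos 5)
Your proof is correct and follows essentially the same route as the paper, whose entire proof reads ``It is obvious by Lemma \ref{cxzzj} and Theorem \ref{ep-core-baohan}'': you reduce to Theorem \ref{ep-core-baohan} via the bridge that core invertibility implies group invertibility (Lemma \ref{cxzzj}(1)), and use the standard facts that EP elements are core invertible and satisfy $aR=a^{\ast}R$, $Ra=Ra^{\ast}$ for the forward direction. You have merely made explicit the citations (Lemma \ref{core-epa}, Lemma \ref{mp-sharpEPinvo}) that the paper leaves implicit.
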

\begin{proof}
It is obvious by Lemma \ref{cxzzj} and Theorem \ref{ep-core-baohan}.
\end{proof}

%Recall that an element $a\in R$ is $\ast$-cancellable if
%$a^{\ast}ax=0$ implies $ax=0$ and $yaa^{\ast}=0$ implies $ya=0$.

%\begin{lemma} \emph{\cite{KP}} \label{mp-cancell}
%Let $a\in R$. Then the following are equivalent:\\
%$(1)$ $a\in R^{\dagger}$;\\
%$(2)$ $a$ is $\ast$-cancellable, $aa^{\ast}$ and $a^{\ast}a$ are regular;\\
%$(3)$ $a$ is $\ast$-cancellable and $a^{\ast}a\in R^\#$.\\ In this case:
%$$a^{\dagger}=a^{\ast}(aa^{\ast})^{-}a(a^{\ast}a)^{-}a^{\ast}=(a^{\ast}a)^\#a^{\ast}.$$
%\end{lemma}

%\begin{lem} \label{left-right-cancell}
%Let $R$ be a ring. Then we have:\\
%$(1)$ If $\hat{a}\in a\{1,2,3\}$, then $\hat{a}b=\hat{a}c$ if and only if $a^{\ast}b=a^{\ast}c$, for all $b,c\in R$;\\
%$(2)$ If $\check{a}\in a\{1,2,4\}$, then $b\check{a}=c\check{a}$ if and only if $ba^{\ast}=ca^{\ast}$, for all $b,c\in R$.
%\end{lem}

\begin{theorem}  \label{ep-core-1}
Let $a\in \core{R}$. Then $a\in R^{\mathrm{EP}}$ if and only if $[\core{a},(\core{a}a)^{\ast}a]=0$.
\end{theorem}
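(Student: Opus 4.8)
The plan is to recast the EP property through the results already established and then grind only the algebra forced by the five defining identities of the core inverse. By Theorem \ref{ep-xuyang}, for $a\in\core{R}$ we have $a\in R^{\mathrm{EP}}$ if and only if $(\core{a}a)^{\ast}=\core{a}a$, and by Lemma \ref{core-epa} EP is also equivalent to $[a,\core{a}]=0$; either of these will be the target. For the forward implication I would assume $a\in R^{\mathrm{EP}}$, so that $[a,\core{a}]=0$ and $(\core{a}a)^{\ast}=\core{a}a$. Then, using the identity $\core{a}a^{2}=a$ from Lemma \ref{five-equations}, one computes $(\core{a}a)^{\ast}a=\core{a}a\cdot a=\core{a}a^{2}=a$, so the bracket in question collapses to $[\core{a},(\core{a}a)^{\ast}a]=[\core{a},a]=0$.

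The real content is the converse, which I would handle by abbreviating $c=\core{a}$ and introducing the two idempotents $p=ac$ and $q=ca$. The five identities of Lemma \ref{five-equations} show at once that $p$ is a projection (from $cac=c$ and $(ac)^{\ast}=ac$) and that $q$ is an idempotent (from $aca=a$), and Theorem \ref{ep-xuyang} tells us that proving EP amounts to proving $q=q^{\ast}$, i.e.\ $(ca)^{\ast}=ca$. Writing the hypothesis $[\core{a},(\core{a}a)^{\ast}a]=0$ as $c\,q^{\ast}a=q^{\ast}a\,c$ (with $q^{\ast}=(ca)^{\ast}$), I would manipulate it by one-sided multiplications: right-multiplying by $c$ and invoking $ac^{2}=c$ gives $c\,q^{\ast}p=q^{\ast}c$; right-multiplying this by $a$ and using $pa=aca=a$ gives $c\,q^{\ast}a=q^{\ast}q$; comparing with the hypothesis, which reads $c\,q^{\ast}a=q^{\ast}p$, yields $q^{\ast}p=q^{\ast}q$. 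Taking adjoints and using $p^{\ast}=p$ produces $pq=q^{\ast}q$. The finishing observation is that $pq=ac^{2}a=ca=q$ (once more by $ac^{2}=c$), whence $q^{\ast}q=q$; since $q^{\ast}q$ is self-adjoint this forces $q=q^{\ast}$, so $(ca)^{\ast}=ca$ and $a\in R^{\mathrm{EP}}$ by Theorem \ref{ep-xuyang}.

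The step I expect to be the main obstacle is precisely the bookkeeping in this converse: one must choose the sequence of left/right multiplications so that the five core-inverse identities telescope the commutator down to the clean equation $q^{\ast}q=q$. The two facts that make the argument close are the easy identity $pq=q$ (a one-line consequence of $ac^{2}=c$) and the recognition that the hypothesis is, after multiplication by $a$, nothing but the relation $q^{\ast}p=q^{\ast}q$. Once these are in hand, self-adjointness of $q^{\ast}q$ does the rest, and no appeal to Moore--Penrose invertibility of $a$ is needed, which is important since $a\in\core{R}$ is not assumed to lie in $R^{\dagger}$.
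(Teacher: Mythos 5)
Your proof is correct, and the converse is argued along a genuinely different line from the paper's. The forward direction is essentially the paper's: both of you collapse $(\core{a}a)^{\ast}a$ to $a$ (you via $(\core{a}a)^{\ast}=\core{a}a$ and $\core{a}a^{2}=a$, the paper via $a^{\dagger}=a^{\#}=\core{a}$) and reduce the bracket to $[\core{a},a]=0$. For the converse, the paper applies the involution to the whole hypothesis $\core{a}(\core{a}a)^{\ast}a=(\core{a}a)^{\ast}a\core{a}$, simplifies the right-hand side with $(a\core{a})^{\ast}=a\core{a}$ and $a(\core{a})^{2}=\core{a}$ to get $a^{\ast}\core{a}a(\core{a})^{\ast}=\core{a}a$, and then right-multiplies by $a$ to obtain $a=a^{\ast}\core{a}a(\core{a})^{\ast}a\in a^{\ast}R$, concluding via the range-inclusion criterion of Corollary \ref{ep-core-baohan-cor}. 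You instead work with $p=a\core{a}$ and $q=\core{a}a$, use one-sided multiplications to turn the hypothesis into $q^{\ast}p=q^{\ast}q$, take adjoints and use $pq=q$ to get $q^{\ast}q=q$, and conclude $q=q^{\ast}$, i.e.\ condition (2) of Theorem \ref{ep-xuyang}. (All your individual steps check out: $cq^{\ast}ac=q^{\ast}c$ from $ac^{2}=c$, then $cq^{\ast}a=q^{\ast}q$ from $pa=a$, comparison with $cq^{\ast}a=q^{\ast}p$, and $pq=ac^{2}a=ca=q$.) The trade-off: the paper's argument is shorter but routes through Corollary \ref{ep-core-baohan-cor}, which itself rests on Theorem \ref{ep-core-baohan} and Lemma \ref{cxzzj}; yours is a little longer but needs only the five identities of Lemma \ref{five-equations} plus the equivalence $(1)\Leftrightarrow(2)$ of Theorem \ref{ep-xuyang}, and it produces the clean intermediate identity $q^{\ast}q=q$, which is of independent interest. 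Your closing remark is also apt in that neither converse may assume $a\in R^{\dagger}$, and indeed both proofs avoid it; note only that the paper's converse does so as well, so this is an advantage over a hypothetical Moore--Penrose-based argument, not over the paper's.
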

\begin{proof}
If $a$ is EP, then $a\in \core{R}$ and $a^\# = a^{\dagger} = \core{a}$.
Thus $[\core{a},(\core{a}a)^{\ast}a]
=[a^\#,(a^{\dagger}a)^{\ast}a]=[a^\#,a^{\dagger}a^{2}]=[a^\#,a^\#a^{2}]=[a^\#,a]=0.$

Conversely, if $[\core{a},(\core{a}a)^{\ast}a]=0$, then
\begin{eqnarray}\label{mpeq7}
\core{a}(\core{a}a)^{\ast}a = (\core{a}a)^{\ast}a\core{a}.
\end{eqnarray}
Taking involution $\ast$ on (\ref{mpeq7}), in view of Lemma~\ref{five-equations}, we get
$a^* \core{a}a(\core{a})^* = a (\core{a})^2 a = \core{a} a$,
Thus, $a^\ast \core{a} a(\core{a})^\ast a = \core{a}a^2=a$. Hence, $aR\subseteq a^{\ast}R$.
Therefore, $a\in R^{\mathrm{EP}}$ by Corollary \ref{ep-core-baohan-cor}.
\end{proof}

In \cite[Theorem 16]{BO}, for an operator $T\in L(X)$, where $X$ is a Banach space, Boasso proved that
for a Moore-Penrose invertible operator $T$,
$T$ is an EP operator if and only if there exists an invertible operator
$P\in L(X)$ such that $T^{\dagger}=PT.$
Inspired by this result, we get the following theorem.

\begin{theorem} \label{core-unita}
Let $a\in R^{\tiny\textcircled{\tiny\#}}$. Then the following are equivalent:
\begin{itemize}
\item[{\rm (1)}] $a\in R^{\mathrm{EP}}$;
\item[{\rm (2)}] there exists a unit $u\in R$ such that $\core{a} = ua$;
%\item[{\rm (3)}] there exists a left invertible element $v\in R$ such that $\core{a}=va$;
\item[{\rm (3)}] there exists an element $b\in R$ such that $\core{a}=ba$.
\end{itemize}
\end{theorem}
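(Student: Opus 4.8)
The plan is to establish the cycle $(1)\Rightarrow(2)\Rightarrow(3)\Rightarrow(1)$. The implication $(2)\Rightarrow(3)$ is immediate: a unit is in particular an element of $R$, so one takes $b=u$. This leaves two substantial implications, and I would organize the argument so that the slick direction $(3)\Rightarrow(1)$ is disposed of first and the constructive direction $(1)\Rightarrow(2)$ last.

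For $(3)\Rightarrow(1)$ I would exploit the defining ideal condition of the core inverse. Recall that by the very definition of $\core{a}$ one has $R\core{a}=Ra^{\ast}$. If $\core{a}=ba$ for some $b\in R$, then $\core{a}\in Ra$, so $Ra^{\ast}=R\core{a}\subseteq Ra$. Since $a\in\core{R}$ by hypothesis, the implication $(5)\Rightarrow(1)$ of Corollary~\ref{ep-core-baohan-cor} immediately yields $a\in R^{\mathrm{EP}}$. This direction is essentially a one-line consequence once the ideal equality $R\core{a}=Ra^{\ast}$ is recalled.

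For $(1)\Rightarrow(2)$ the task is to produce an honest two-sided unit $u$ with $\core{a}=ua$, and this is the main obstacle. Left invertibility alone would be cheap, since $a$ EP gives $\core{a}=a^{\#}=(a^{\#})^{2}a$; but upgrading to a genuine unit requires a better choice. Since $a$ is EP, Lemma~\ref{core-epa} gives $\core{a}=a^{\#}=a^{\dagger}$, and I would set $p=1-aa^{\dagger}$, a projection satisfying $ap=pa=0$. By Theorem~\ref{t32}(2), $a+p$ is invertible, so $u=(a+p)^{-2}$ is a unit. Using $ap=pa=0$ and $p^{2}=p$ one computes $(a+p)^{2}=a^{2}+p$, hence $u=(a^{2}+p)^{-1}$; one then checks $(a^{2}+p)a^{\#}=a$ by combining $a^{2}a^{\#}=a$ and $pa^{\#}=(1-aa^{\#})a^{\#}=0$. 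This gives $a^{\#}=(a^{2}+p)^{-1}a=ua$, and since $\core{a}=a^{\#}$, we conclude $\core{a}=ua$ with $u$ a unit, as required.

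The only delicate point is the explicit choice of the unit $u$ in $(1)\Rightarrow(2)$; everything else reduces to the elementary core-inverse identities of Lemma~\ref{five-equations}, the group-inverse relations for an EP element, and the already established Theorem~\ref{t32} and Corollary~\ref{ep-core-baohan-cor}. I expect no further obstruction beyond verifying the routine identities $(a+p)^{2}=a^{2}+p$ and $(a^{2}+p)a^{\#}=a$.
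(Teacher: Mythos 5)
Your proposal is correct and follows essentially the same route as the paper: the cycle $(1)\Rightarrow(2)\Rightarrow(3)\Rightarrow(1)$, with $(2)\Rightarrow(3)$ trivial and $(3)\Rightarrow(1)$ argued identically via $R\core{a}=Ra^{\ast}$ together with Corollary~\ref{ep-core-baohan-cor}. In $(1)\Rightarrow(2)$ you even produce the same unit as the paper --- your $u=(a^{2}+p)^{-1}$ with $p=1-aa^{\#}$ is exactly the inverse of $a^{2}+1-aa^{\#}$, which the paper exhibits explicitly as $(a^{\#})^{2}+1-aa^{\#}$ --- the only difference being that you justify its invertibility by appealing to Theorem~\ref{t32}(2) and the identity $(a+p)^{2}=a^{2}+p$, whereas the paper simply verifies the inverse by direct computation.
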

\begin{proof}
$(1)\Rightarrow(2)$: If $a\in R^{\mathrm{EP}}$, then $a\in \core{R}$ and $\core{a}=a^\#$.
Let $u=(a^\#)^{2}+1-aa^\#$. Since
$u(a^{2}+1-aa^\#)=(a^{2}+1-aa^\#)u=1$, we get that $u$ is a unit.
Furthermore, we have $ua=((a^\#)^{2}+1-aa^\#)a=a^\#=a^{\tiny\textcircled{\tiny\#}}$.

$(2)\Rightarrow(3)$ is clear.

$(3)\Rightarrow(1)$:
We know that $R \core{a} = Ra^*$ by the definition of the core inverse. From $\core{a} = ba$ we
get $R \core{a} \subseteq Ra$. Thus $Ra^* = Ra^{\tiny\textcircled{\tiny\#}} \subseteq Ra$.
Therefore, we deduce that $a\in R^{\mathrm{EP}}$ by Corollary \ref{ep-core-baohan-cor}.
%$(3)\Rightarrow(1)$: Suppose there exists a left invertible element $v\in R$ such that $a^{\tiny\textcircled{\tiny\#}}=va$.
%Then $1=tv$ for some $t\in R$ and $ta^{\tiny\textcircled{\tiny\#}}=sva=a,$ which gives $a^{\circ}=(a^{\tiny\textcircled{\tiny\#}})^{\circ}$.
%By the definition of core inverse, we have $Ra^{\tiny\textcircled{\tiny\#}}=Ra^{\ast}$, then $(a^{\tiny\textcircled{\tiny\#}})^{\circ}=(a^{\ast})^{\circ}$. Hence $a^{\circ}=(a^{\ast})^{\circ},$
%which give $Ra=Ra^{\ast}$ by $a$ and $a^{\ast}$ are regular and Lemma \ref{annihilator}.
%Therefore, $a\in R^{\mathrm{EP}}$ by Lemma \ref{five-equations} and Lemma \ref{mp-sharpEPinvo}.
\end{proof}

By the previous result, we know that if $a\in R$ is an EP element, then
the equation $\core{a} = xa$ has at least one solution. In fact, one solution
is $x=(\core{a})^2+1-a\core{a}$, as one can see in the proof
of (1) $\Rightarrow$ (2) of the previous result. We will
describe the set of solutions in next result.

\begin{theorem}
If $a \in R^{\rm EP}$, then $\{ x \in R: \core{a} = xa \} = \{ (\core{a})^2+w(1-a\core{a}):
w \in R \}$.
\end{theorem}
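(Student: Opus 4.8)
The plan is to prove the two set inclusions separately, the whole argument resting on two identities that hold because $a$ is EP: the \emph{annihilation} identity $(1-a\core{a})a=0$ and the \emph{reduction} identity $(\core{a})^{2}a=\core{a}$. To obtain these, first recall from Lemma~\ref{core-epa} that $a\in R^{\rm EP}$ forces $\core{a}=a^{\#}$, so that $a\core{a}=\core{a}a$. Combining this commutativity with the core identities $a\core{a}a=a$ and $\core{a}a\core{a}=\core{a}$ of Lemma~\ref{five-equations} gives $(1-a\core{a})a=a-a\core{a}a=0$ and $(\core{a})^{2}a=\core{a}(\core{a}a)=\core{a}(a\core{a})=\core{a}a\core{a}=\core{a}$. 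I also note that $a\core{a}$ is idempotent (from $a\core{a}a=a$), so $1-a\core{a}$ is idempotent and, for any $y\in R$, one has $y\in R(1-a\core{a})$ if and only if $y(a\core{a})=0$.

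For the inclusion $\supseteq$, I would simply substitute: for arbitrary $w\in R$,
$$[(\core{a})^{2}+w(1-a\core{a})]a=(\core{a})^{2}a+w(1-a\core{a})a=\core{a}+0=\core{a},$$
using the two identities above, so every element of the right-hand set solves $xa=\core{a}$.

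For the inclusion $\subseteq$, suppose $x$ satisfies $xa=\core{a}$ and set $w:=x-(\core{a})^{2}$. The point is to verify that $w\in R(1-a\core{a})$, equivalently $w(a\core{a})=0$. This is the one computation to carry out:
$$w(a\core{a})=(xa)\core{a}-(\core{a})^{2}a\,\core{a}=\core{a}\,\core{a}-\core{a}\,\core{a}=0,$$
where $xa=\core{a}$ is the hypothesis and $(\core{a})^{2}a=\core{a}$ is the reduction identity. Consequently $w(1-a\core{a})=w-w(a\core{a})=w=x-(\core{a})^{2}$, which rearranges to $x=(\core{a})^{2}+w(1-a\core{a})$, exhibiting $x$ as a member of the right-hand set with this explicit choice of $w$.

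There is no genuine obstacle here: the result is a direct verification once the two EP identities are in place. The only mild subtlety is recognizing that the right factor $1-a\core{a}$ is a (non-Hermitian) idempotent and that membership in the right ideal $R(1-a\core{a})$ is detected precisely by right multiplication by $a\core{a}$; this is what reduces the description of the solution set to the single equation $w(a\core{a})=0$. The overall structure mirrors the solution-set descriptions already obtained in Theorem~\ref{biao-c} and Theorem~\ref{julio-biao-a}.
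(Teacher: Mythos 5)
Your proof is correct and takes essentially the same approach as the paper: both inclusions are verified directly from the identities $(\core{a})^{2}a=\core{a}$ and $(1-a\core{a})a=0$. The only cosmetic differences are that the paper derives $(\core{a})^{2}a=\core{a}$ from the factorization $\core{a}=ba$ of Theorem~\ref{core-unita} rather than from the commutativity $[a,\core{a}]=0$, and for the inclusion ``$\subseteq$'' it simply writes $x=(\core{a})^{2}+x(1-a\core{a})$, i.e.\ takes $w=x$ instead of your $w=x-(\core{a})^{2}$.
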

\begin{proof}
Let $x\in R$ such that $\core{a}=xa$. Since $x = x-xa\core{a}+(\core{a})^2 =
x(1-a\core{a})+(\core{a})^2$ we have proved the ``$\subseteq$'' inclusion of the statement
of the theorem.

Let us prove the opposite inclusion. Since $a \in R^{\rm EP}$, by Theorem~\ref{core-unita}
there exists $b \in R$ such that $\core{a}=ba$. Now, $(\core{a})^2 a = ba \core{a} a=ba=\core{a}$.
Finally, if $w$ is any element of $R$, then $\left[ (\core{a})^2 +w (1-a\core{a})\right]a  =
\core{a}$.
\end{proof}

%%%%%%%%%%%%%%%%%%%%%%%%%%%%%%%%%%%%%%%%%%%%%%%%%%%%%%%%%%%%%%%%%%%%%%%%%%%%%%%%%%%%%%%%%%%%%%%%%%%%%%%%%%%%%%%%%%%%%%%%%%%%%%%%%%%%%%%%%%%%%%%%%%%%%%%%%%%%%%%%%%%%%%%%%%%%%%%%%%%%
%%%%%%%%%%%%%%%%%%%%%%%%%%%%%%%%%%%%%%%%%%%%%%%%%%%%%%%%%%%%%%%%%%%%%%%%%%%%%%%%%%%%%%%%%%%%%%%%%%%%%%%%%%%%%%%%%%%%%%%%%%%%%%%%%%%%%%%%%%%%%%%%%%%%%%%%%%%%%%%%%%%%%%%%%%%%%%%%%%%%
%%%%%%%%%%%%%%%%%%%%%%%%%%%%%%%%%%%%%%%%%%%%%%%%%%%%%%%%%%%%%%%%%%%%%%%%%%%%%%%%%%%%%%%%%%%%%%%%%%%%%%%%%%%%%%%%%%%%%%%%%%%%%%%%%%%%%%%%%%%%%%%%%%%%%%%%%%%%%%%%%%%%%%%%%%%%%%%%%%%%
%%%%%%%%%%%%%%%%%%%%%%%%%%%%%%%%%%%%%%%%%%%%%%%%%%%%%%%%%%%%%%%%%%%%%%%%%%%%%%%%%%%%%%%%%%%%%%%%%%%%%%%%%%%%%%%%%%%%%%%%%%%%%%%%%%%%%%%%%%%%%%%%%%%%%%%%%%%%%%%%%%%%%%%%%%%%%%%%%%%%
\section { \bf  When a Moore-Penrose invertible element is an EP element}

Since any EP element is Moore-Penrose invertible, it is natural to ask when a Moore-Penrose invertible element is an EP element.
The concept of bi-EP was introduced by Hartwig and Spindelb\"{o}ck in \cite{HS} for complex matrices.
They proved that for a complex $A\in \mathbb{C}_{_{n\times n}}$, if $A$ is group invertible, then
$A$ is an EP matrix if and only if $A$ is bi-EP. We give a generalization of this result in Theorem \ref{g-bi-ep-thm}.
In this section, we give the definition of $n$-EP, which is a generalization of bi-EP.
We show that any $n$-EP element is an EP element whenever this element is group invertible.

\begin{definition} \emph{\cite{HS}}\label{bi-ep-defn}
An element $a\in R$ is called bi-EP if $a\in R^{\dagger}$ and $[aa^{\dagger},a^{\dagger}a]=0.$
\end{definition}

%\begin{prop} \label{bi-ep-ep}
%Let $a\in R$. Then $a\in R^{\mathrm{EP}}$ if and only if $a\in R^{\dagger}\cap R^\#$ and $a$ is bi-EP.
%\end{prop}

\begin{definition}  \label{generalized-bi-ep}
%Let $a\in R^{\dagger}$ and $n\geqslant 1$, $a$ is called $n$-EP if $[a^{\dagger}a^{n}, a^{n}a^{\dagger}]=0$.
Let $n$ be a positive integer.
An element $a\in R$ is called $n$-EP if $a\in R^{\dagger}$ and $[a^na^{\dagger},a^{\dagger}a^n]=0.$
\end{definition}

Note that 1-EP is coincide with bi-EP.

In \cite[Theorem 2.1]{MD}, Mosi\'{c} and Djordjevi\'{c} proved that $a\in R^{\mathrm{EP}}$ if and only if $a\in R^\#\cap R^{\dagger}$ and
$a^{n}a^{\dagger}=a^{\dagger}a^{n}$ for some $n\geqslant 1$. This result also can be found in \cite[Theorem~2.4]{C} by Chen.
In the following theorem, we give a generalization of this result.

\begin{theorem} \label{g-bi-ep-thm}
Let $a\in R$ and $n$ be a positive integer. Then $a\in R^{\mathrm{EP}}$ if and only if $a\in R^{\dagger}\cap R^\#$ and a is $n$-EP.
\end{theorem}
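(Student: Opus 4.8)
The plan is to prove both directions, with the forward one being immediate and the backward one carrying the real content. For the ``only if'' part I would simply take $x=a^{\dagger}=a^{\#}$: if $a\in R^{\mathrm{EP}}$ then $a\in R^{\dagger}\cap R^{\#}$ by definition, and $aa^{\dagger}=a^{\dagger}a$, so $a^{\dagger}$ commutes with every power of $a$; hence $a^{n}a^{\dagger}=a^{\dagger}a^{n}$ and in particular $[a^{n}a^{\dagger},a^{\dagger}a^{n}]=0$, i.e. $a$ is $n$-EP in the sense of Definition~\ref{generalized-bi-ep}.

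For the converse, assume $a\in R^{\dagger}\cap R^{\#}$ is $n$-EP. Write $p=aa^{\dagger}$ and $q=a^{\dagger}a$, two projections; by Lemma~\ref{core-epa}(2) it suffices to show $p=q$ (equivalently $[a,a^{\dagger}]=0$). First I would rewrite the hypothesis $[a^{n}a^{\dagger},a^{\dagger}a^{n}]=0$ as
\[
a^{n}(a^{\dagger})^{2}a^{n}=a^{\dagger}a^{2n}a^{\dagger}.
\]
The key trick is to multiply this identity on the left by $1-p$. Since $(1-p)a=a-aa^{\dagger}a=0$ we get $(1-p)a^{n}=0$, so the left-hand side dies and we are left with $(1-p)a^{\dagger}a^{2n}a^{\dagger}=0$. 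Now $a^{2n}a^{\dagger}=a^{2n-1}(aa^{\dagger})=a^{2n-1}p$, and because $(1-p)a^{k}=0$ for $k\geq1$ one checks that $a^{2n-1}p=(pap)^{2n-1}$ lies in the corner ring $pRp$. Setting $\gamma=(1-p)a^{\dagger}=(1-p)a^{\dagger}p$, the surviving identity reads $\gamma\,(pap)^{2n-1}=0$.

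At this point I would invoke the structural input that $a\in R^{\#}$ forces $a_{1}:=pap$ to be invertible in $pRp$ (this is the Hartwig--Spindelb\"ock representation of a group invertible element with respect to $p=aa^{\dagger}$, and is where the extra hypothesis $a\in R^{\#}$, beyond $a\in R^{\dagger}$, is genuinely used). Then $(pap)^{2n-1}$ is a unit of $pRp$, and right-multiplying $\gamma(pap)^{2n-1}=0$ by its inverse gives $\gamma p=\gamma=0$, that is $(1-p)a^{\dagger}=0$, so $a^{\dagger}=pa^{\dagger}\in pRp$. To finish I would show $p=q$ as follows: from $a^{\dagger}=pa^{\dagger}$ we get $q=a^{\dagger}a=pa^{\dagger}a=pq$, and taking adjoints $q=qp$; on the other hand $a^{\dagger}\in pRp$ together with $a_{1}a^{\dagger}=(ap)a^{\dagger}=a(pa^{\dagger})=aa^{\dagger}=p$ shows that $a^{\dagger}$ is the inverse of the unit $a_{1}$ in $pRp$, whence $p=a^{\dagger}a_{1}=a^{\dagger}(pap)=(a^{\dagger}a)p=qp$. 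Comparing $qp=q$ with $qp=p$ yields $p=q$, and therefore $a\in R^{\mathrm{EP}}$.

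The hard part is really the invertibility of $pap$ in $pRp$: all the manipulations above are elementary once this is known, but it cannot be squeezed out of bare $axa=a$ type identities — it is exactly the content of the group-inverse (Hartwig--Spindelb\"ock) decomposition and is the place where $a\in R^{\#}$ does its work. A fully self-contained write-up would either cite that decomposition or reprove it; alternatively one could run the same computation inside the explicit matrix form $a=\left[\begin{smallmatrix}a_{1}&a_{2}\\0&0\end{smallmatrix}\right]$, $a^{\dagger}=\left[\begin{smallmatrix}a_{1}^{\ast}\Delta^{-1}&0\\a_{2}^{\ast}\Delta^{-1}&0\end{smallmatrix}\right]$ with $\Delta=a_{1}a_{1}^{\ast}+a_{2}a_{2}^{\ast}$, where $a^{n}a^{\dagger}$ collapses neatly to $\left[\begin{smallmatrix}a_{1}^{\,n-1}&0\\0&0\end{smallmatrix}\right]$ and the $(2,1)$-entry of the commutator reads $a_{2}^{\ast}\Delta^{-1}a_{1}^{2n-1}=0$, forcing $a_{2}=0$ and hence EP. I would also note that only one ``side'' of the commutator (one off-diagonal block) is actually needed, mirroring the one-sided inclusions that already sufficed in Theorem~\ref{ep-core-baohan}.
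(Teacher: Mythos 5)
Your argument is correct, but it takes a genuinely different route from the paper's. The paper stays purely equational: from $a^{n}(a^{\dagger})^{2}a^{n}=a^{\dagger}a^{2n}a^{\dagger}$ it pre- and post-multiplies by $a$, then sandwiches with $a^{\#}$ to obtain $a^{2n-1}a^{\dagger}=a^{n}(a^{\dagger})^{2}a^{n}=a^{\dagger}a^{2n-1}$, and closes by citing the Mosi\'{c}--Djordjevi\'{c} characterization (\cite[Theorem 2.1]{MD}): $a\in R^{\#}\cap R^{\dagger}$ with $a^{m}a^{\dagger}=a^{\dagger}a^{m}$ for some $m\geqslant 1$ implies $a\in R^{\mathrm{EP}}$. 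You instead run a structural corner-ring argument relative to $p=aa^{\dagger}$: annihilating one side of the identity with $1-p$, using invertibility of $pap$ in $pRp$ to force $(1-p)a^{\dagger}=0$, and then deducing $aa^{\dagger}=a^{\dagger}a$ directly, so that only Lemma~\ref{core-epa}(2) is needed instead of the external result from \cite{MD}. Your version also makes visible that a formally weaker hypothesis suffices: only the component $(1-p)\,a^{\dagger}a^{2n}a^{\dagger}=0$ of the commutation relation (one off-diagonal block) is ever used, which nicely parallels the one-sided inclusions of Theorem~\ref{ep-core-baohan}. The one step you defer --- invertibility of $a_{1}=pap$ in $pRp$ when $a\in R^{\#}\cap R^{\dagger}$ --- is less heavy than you suggest and needs no appeal to the Hartwig--Spindelb\"{o}ck decomposition: its inverse is simply $pa^{\#}p$, since $pa=a$ gives $pa^{\#}p\cdot pap=pa^{\#}ap=p\,(aa^{\dagger})=p$, while $a^{\#}=a(a^{\#})^{2}$ gives $pap\cdot pa^{\#}p=apa^{\#}p=a^{2}(a^{\#})^{2}p=aa^{\#}p=p$. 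With these two lines inserted (or with the explicit matrix computation you sketch, which is the complex-matrix shadow of the same fact), your proof is complete and self-contained.
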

\begin{proof}
Suppose $a\in R^{\mathrm{EP}}$. Then $[a,a^{\dagger}]=0$,
which gives $[a^{n}a^{\dagger},a^{\dagger}a^{n}]=0.$ That is, $a$ is $n$-EP.

Conversely, suppose that $a\in R^{\dagger}\cap R^\#$ and $a$ is $n$-EP. Then we have
\begin{eqnarray}\label{biepeqg1}
a^{\dagger}a^{2n}a^{\dagger}=a^{n}(a^{\dagger})^{2}a^{n}.
\end{eqnarray}
Pre-multiplication and post-multiplication of (\ref{biepeqg1}) by $a$ respectively now yields
$a^{2n}a^{\dagger}=a^{n+1}(a^{\dagger})^{2}a^{n},$ and $a^{\dagger}a^{2n}=a^{n}(a^{\dagger})^{2}a^{n+1}.$
Thus
\begin{eqnarray}\label{biepeqg4}
a^{2n-1}a^{\dagger}=a^\#a^{2n}a^{\dagger}=a^\#a^{n+1}(a^{\dagger})^{2}a^{n}=a^{n}(a^{\dagger})^{2}a^{n}.\\
\label{biepeqg5}
a^{\dagger}a^{2n-1}=a^{\dagger}a^{2n}a^\#=a^{n}(a^{\dagger})^{2}a^{n+1}a^\#=a^{n}(a^{\dagger})^{2}a^{n}.
\end{eqnarray}
By (\ref{biepeqg4}) and (\ref{biepeqg5}), we have $a^{2n-1}a^{\dagger}=a^{\dagger}a^{2n-1}.$
Hence by \cite[Theorem 2.1]{MD}, we have $a\in R^{\mathrm{EP}}.$
\end{proof}

%The following result is similar to Theorem \ref{core-unita}, so we omit the proof.
\begin{theorem} \label{core-unit-bis}
Let $a\in R^{\dagger}$. Then the following are equivalent:
\begin{itemize}
\item[{\rm (1)}] $a\in R^{\mathrm{EP}}$;
\item[{\rm (2)}] there exists a unit $u\in R$ such that $a^{\dagger}=ua$;
\item[{\rm (3)}] there exists a left invertible element $v\in R$ such that $a^{\dagger}=va$.
\end{itemize}
\begin{proof}
$(1)\Rightarrow(2)$: If $a\in R^{\mathrm{EP}}$, then $a\in R^{\dagger}$ and $a^{\dagger}=a^\#.$
Let $u=(a^\#)^{2}+1-aa^\#$. Since
$u(a^{2}+1-aa^\#)=(a^{2}+1-aa^\#)u=1$ we get that $u$ is a unit. Furthermore,
$ua=((a^\#)^{2}+1-aa^\#)a=a^\#=a^{\dagger}$.

$(2)\Rightarrow(3)$ is clear.

$(3)\Rightarrow(1)$ Suppose that there exists a left invertible element $v\in R$ such
that $a^{\dagger}=va$. Then $1=tv$ for some $t\in R$ and $ta^{\dagger}=tva=a.$  Thus $Ra^{\dagger}\subseteq Ra$ and $Ra\subseteq Ra^{\dagger}$.
Since $Ra^{\dagger}=Ra^{\ast}$, we deduce that $Ra^{\ast} = Ra$, that is $a$ is an EP element.
\end{proof}
\end{theorem}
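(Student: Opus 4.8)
The plan is to prove the cycle $(1)\Rightarrow(2)\Rightarrow(3)\Rightarrow(1)$, running closely parallel to the core-inverse analogue already established in Theorem~\ref{core-unita}. For $(1)\Rightarrow(2)$ I would use that an EP element satisfies $a^{\dagger}=a^\#$, so it suffices to produce a unit $u$ with $a^\#=ua$. Imitating the construction in Theorem~\ref{core-unita}, I would take $u=(a^\#)^{2}+1-aa^\#$ and exhibit its inverse explicitly as $a^{2}+1-aa^\#$; checking $u(a^{2}+1-aa^\#)=(a^{2}+1-aa^\#)u=1$ is a direct computation with the group-inverse identities $aa^\#=a^\#a$, $aa^\#a=a$ and $a^\#aa^\#=a^\#$. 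These same identities give $ua=((a^\#)^{2}+1-aa^\#)a=a^\#=a^{\dagger}$. The implication $(2)\Rightarrow(3)$ requires nothing, since a unit is a fortiori left invertible.

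The real content is $(3)\Rightarrow(1)$. Starting from a left invertible $v$ with $a^{\dagger}=va$, I would pick $t\in R$ with $tv=1$ and form $ta^{\dagger}=tva=a$. Interpreting these as left-ideal memberships, $a^{\dagger}=va$ gives $Ra^{\dagger}\subseteq Ra$ and $a=ta^{\dagger}$ gives $Ra\subseteq Ra^{\dagger}$, so $Ra=Ra^{\dagger}$. I would then invoke the standard identity $Ra^{\dagger}=Ra^{\ast}$, valid for any Moore--Penrose invertible element: the inclusion $Ra^{\dagger}\subseteq Ra^{\ast}$ comes from $a^{\dagger}=a^{\dagger}aa^{\dagger}=(a^{\dagger}a)^{\ast}a^{\dagger}=a^{\ast}(a^{\dagger})^{\ast}a^{\dagger}$, while $Ra^{\ast}\subseteq Ra^{\dagger}$ comes from $a^{\ast}=(aa^{\dagger}a)^{\ast}=a^{\ast}aa^{\dagger}$, both relying on the fact that $a^{\dagger}a$ and $aa^{\dagger}$ are Hermitian. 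Combining $Ra=Ra^{\dagger}=Ra^{\ast}$ yields $Ra=Ra^{\ast}$; since $a\in R^{\dagger}$, Lemma~\ref{mp-sharpEPinvo} then gives $a\in R^{\mathrm{EP}}$ at once.

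The one step that calls for genuine care is $(3)\Rightarrow(1)$, and in particular the realization that the \emph{one-sided} hypothesis of left invertibility already forces the \emph{two-sided} equality $Ra=Ra^{\dagger}$; everything after that is the routine identity $Ra^{\dagger}=Ra^{\ast}$ followed by an application of Lemma~\ref{mp-sharpEPinvo}. The other two implications are, respectively, a transcription of the explicit unit from Theorem~\ref{core-unita} and a triviality.
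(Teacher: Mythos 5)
Your proposal is correct and takes essentially the same route as the paper: the same unit $u=(a^\#)^{2}+1-aa^\#$ with explicit inverse $a^{2}+1-aa^\#$ for $(1)\Rightarrow(2)$, and the same argument for $(3)\Rightarrow(1)$ via $tv=1$, the equality $Ra=Ra^{\dagger}=Ra^{\ast}$, and Lemma~\ref{mp-sharpEPinvo}. One small correction: your chain $a^{\dagger}=(a^{\dagger}a)^{\ast}a^{\dagger}=a^{\ast}(a^{\dagger})^{\ast}a^{\dagger}$ exhibits $a^{\dagger}$ as an element of $a^{\ast}R$, i.e., it proves the right-ideal inclusion $a^{\dagger}R\subseteq a^{\ast}R$ rather than $Ra^{\dagger}\subseteq Ra^{\ast}$; the correct one-line justification of the latter uses that $aa^{\dagger}$ is Hermitian, namely $a^{\dagger}=a^{\dagger}(aa^{\dagger})=a^{\dagger}(aa^{\dagger})^{\ast}=a^{\dagger}(a^{\dagger})^{\ast}a^{\ast}\in Ra^{\ast}$.
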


\begin{remark} \emph{In Theorem \ref{core-unita}, we proved that for a core invertible element $a\in R$, $a\in R^{\mathrm{EP}}$ if and only if
there exists an element $b\in R$ such that $a^{\tiny\textcircled{\tiny\#}}=ba$. The following example shows that this affirmation can not
be obtained for a Moore-Penrose invertible element. In Theorem \ref{ep-core-baohan}, we proved that for a group invertible element $a\in R$, $a\in R^{\mathrm{EP}}$ if and only if
$aR\subseteq a^{\ast}R$. The following example also shows that this affirmation can not
be obtained for a Moore-Penrose invertible element.
Observe that if $a \in R^\dag \cap R^\#$, then $a \in R^{\rm EP}$ if and only if
there exists $b \in R$ such that $a^\dag = ba$,
which follows from Theorem \ref{ep-core-baohan}.}
\end{remark}
Recall that an infinite matrix $M$ is said to be {\em bi-finite} if it is both row-finite and column-finite.

\begin{example} \label{example11}
\rm{
Let $R$ be the ring of all bi-finite real matrices with transpose as involution
and let $e_{i,j}$ be the matrix in $R$ with 1 in the $(i, j)$ position and 0 elsewhere.
Let
$A=\sum\limits_{i=1}^{\infty}e_{i+1,i}$ and $B=A^{\ast}$, now
$AB=\sum\limits_{i=2}^{\infty}e_{i,i}$,
$BA=I$. So $A^{\dagger}=B$ and $A^{\dagger}=A^{\dagger}BA=B^{2}A$.
It is easy to check that $B^{2}$ is not left invertible and $A$ is not EP (since $AB\neq BA$).
In addition, $A$ is not group invertible %(if $A \in R^\#$, then
%exists $X \in R$ such that $A^2X=A$, hence $X^*B^2 e_2=Be_2$, where we denote
%by $\{e_n\}_{n =1}^\infty$ the standard basis of $c_0$, and therefore, $0=e_1$ since
%$Be_2=e_1$ and $Be_1=0$; obviously, $0=e_1$ is unfeasible), which is in concordance
%with the paragraph after Theorem~\ref{core-unit-bis}.
(if $A \in R^\#$, then $AA^\#=A^\#A=BAA^\#A=BA=I$. Note that $BA=I$, thus $A$ is invertible, which is not possible).
This example also shows that the equality $aR=a^{\ast}R$ in
the equivalence $(a \in R^{\rm EP} \Leftrightarrow a \in R^\dag, aR=a^{\ast}R)$
cannot be replaced by the inclusions $aR\subseteq a^{\ast}R$ or $a^{\ast}R\subseteq aR$.
%In fact, recall that $B=A^*$ and $BA=I$, which lead to $A=BA^2 \in A^*R$; but
%if $A^* \in AR$, exists $X \in R$ such that $B=AX$, hence
%$A=X^*B$, thus $e_1=Ie_1=BAe_1=BX^*Be_1=0$, which is not possible.
}
\end{example}

%\begin{lemma} \emph{\cite{HM}}\label{mp-com}
%Let $a\in R^{\dagger}$. If $[a,b]=0$ and $[a^{\ast},b]=0$, then $[a^{\dagger},b]=0.$
%\end{lemma}

\begin{proposition} \label{core-ep}
Let $a\in R^{\dagger}$. Then the following are equivalent:
\begin{enumerate}
\item[{\rm (1)}] $a\in R^{\mathrm{EP}}$;
\item[{\rm (2)}] $[a^{\dagger}a,a]=[a^{\dagger},aa^{\dagger}]=0$;
\item[{\rm (3)}] $[a^{\dagger}a,a]=[a,aa^{\dagger}]=0$;
\item[{\rm (4)}] $[a^{\dagger}a,a^{\dagger}]=[a^{\dagger},aa^{\dagger}]=0$;
\item[{\rm (5)}] $[a^{\dagger}a,a^{\dagger}]=[a,aa^{\dagger}]=0$.
\end{enumerate}
\end{proposition}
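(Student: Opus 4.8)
The plan is to reduce everything to the Moore--Penrose identities $aa^\dagger a = a$ and $a^\dagger a a^\dagger = a^\dagger$ by first rewriting each commutator condition as a pair of one-sided equations. A direct computation using these identities gives
\[
[a^\dagger a, a] = a^\dagger a^2 - a, \qquad [a, aa^\dagger] = a^2 a^\dagger - a,
\]
\[
[a^\dagger a, a^\dagger] = a^\dagger - (a^\dagger)^2 a, \qquad [a^\dagger, aa^\dagger] = a^\dagger - a(a^\dagger)^2 .
\]
Hence condition $(2)$ is equivalent to $a^\dagger a^2 = a$ and $a(a^\dagger)^2 = a^\dagger$; condition $(3)$ to $a^\dagger a^2 = a$ and $a^2 a^\dagger = a$; condition $(4)$ to $(a^\dagger)^2 a = a^\dagger$ and $a(a^\dagger)^2 = a^\dagger$; and condition $(5)$ to $(a^\dagger)^2 a = a^\dagger$ and $a^2 a^\dagger = a$. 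Once this dictionary is set up, each implication is short.

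For $(1)\Rightarrow(2)$--$(5)$ I would argue uniformly: if $a\in R^{\mathrm{EP}}$ then $a^\dagger = a^\#$ and $[a,a^\dagger]=0$ by Lemma \ref{core-epa}, so $a$ and $a^\dagger$ lie in a commutative subring of $R$; every commutator occurring in $(2)$--$(5)$ is a commutator of two elements of that subring and therefore vanishes.

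For the converses I would first dispatch the two ``aligned'' conditions by feeding $x=a^\dagger$ into the equation characterizations already proved. Taking $x=a^\dagger$ in Theorem \ref{equations-a} demands exactly $(a^\dagger a)^*=a^\dagger a$ (automatic for the Moore--Penrose inverse), $a^\dagger a^2=a$ and $a(a^\dagger)^2=a^\dagger$, which is precisely condition $(2)$; hence $(2)\Rightarrow(1)$. Symmetrically, taking $y=a^\dagger$ in Theorem \ref{equations-c} demands $(aa^\dagger)^*=aa^\dagger$, $a^2 a^\dagger=a$ and $(a^\dagger)^2 a=a^\dagger$, which is precisely condition $(5)$; hence $(5)\Rightarrow(1)$.

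The conditions $(3)$ and $(4)$ are the ``mismatched'' ones, in which the two equations supply the leading equation of two \emph{different} theorems, so no single substitution applies; these are the only mildly delicate cases. The trick I would use is to evaluate one triple product in two ways. For $(3)$, from $a^\dagger a^2=a$ and $a^2 a^\dagger=a$ one obtains
\[
aa^\dagger = (a^\dagger a^2)a^\dagger = a^\dagger(a^2 a^\dagger) = a^\dagger a,
\]
and for $(4)$, from $a(a^\dagger)^2=a^\dagger$ and $(a^\dagger)^2 a=a^\dagger$ one obtains
\[
a^\dagger a = (a(a^\dagger)^2)a = a((a^\dagger)^2 a) = aa^\dagger .
\]
In both cases $aa^\dagger=a^\dagger a$, which for a Moore--Penrose invertible element is equivalent to being EP (as recalled in the introduction), completing the proof. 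The main obstacle is thus purely the initial bookkeeping of the commutator-to-equation dictionary; after that, each implication collapses to a one-line verification.
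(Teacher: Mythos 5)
Your proof is correct, but it takes a genuinely different route from the paper's. Both arguments start with the same bookkeeping: the paper also implicitly rewrites the vanishing commutators as the equations $a=a^{\dagger}a^{2}$, $a=a^{2}a^{\dagger}$, $a^{\dagger}=a(a^{\dagger})^{2}$, $a^{\dagger}=(a^{\dagger})^{2}a$. The divergence is in how EP is then recognized. The paper reads each equation as a range membership (e.g.\ $a=a^{\dagger}a^{2}\in a^{\ast}R$ since $a^{\dagger}R=a^{\ast}R$, and $a^{\dagger}=a(a^{\dagger})^{2}\in aR$), flips inclusions through the involution where needed, and invokes the Patr\'{\i}cio--Puystjens characterization (Lemma \ref{mp-sharpEPinvo}: for $a\in R^{\dagger}$, $a$ is EP iff $aR=a^{\ast}R$ iff $Ra=Ra^{\ast}$). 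You instead stay entirely at the level of equations: for conditions (2) and (5) you substitute $x=a^{\dagger}$ (resp.\ $y=a^{\dagger}$) into the paper's own three-equation characterizations, Theorem \ref{equations-a} and Theorem \ref{equations-c}, and for the ``mismatched'' conditions (3) and (4) you obtain $aa^{\dagger}=a^{\dagger}a$ by a one-line associativity computation, e.g.\ $aa^{\dagger}=(a^{\dagger}a^{2})a^{\dagger}=a^{\dagger}(a^{2}a^{\dagger})=a^{\dagger}a$, and finish with Lemma \ref{core-epa}. Your handling of (3) and (4) is in fact more explicit than the paper's, which only declares the remaining cases ``similar'' (and, applied literally to (4), the range method needs the extra observation that $Ra^{\ast}\subseteq Ra$ is equivalent under the involution to $aR\subseteq a^{\ast}R$). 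What the paper's approach buys is uniformity --- one mechanism disposes of all four conditions --- and coherence with the range-theoretic viewpoint used throughout; what yours buys is a purely equational, self-contained argument that recycles the machinery of Section 2.
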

\begin{proof}
$(1)\Rightarrow(2)$--$(3)$: If $a\in R^{\mathrm{EP}}$, then $aa^{\dagger}=a^{\dagger}a$.
Thus, $(2)$ and $(3)$ are obvious.

$(2)\Rightarrow(1)$: Observe that $[a^{\dagger}a,a]=0$ implies that
$a=a^{\dagger}a^{2}\in a^{\ast}R$ and $[a^{\dagger},aa^{\dagger}]=0$ implies
that $a^{\dagger}=a(a^{\dagger})^{2}\in aR$, that is $a^{\ast}R\subseteq aR$ since $a^{\dagger}R=a^{\ast}R.$
Thus, $aR=a^{\ast}R$, i.e., $a$ is EP.

$(3)\Rightarrow(1)$:
Observe that $[a^{\dagger}a,a]=0$ implies that $a=a^{\dagger}a^{2}\in a^{\ast}R$ and
$[a,aa^{\dagger}]=0$ implies that $a=a^{2}a^{\dagger}\in Ra^{\dagger}$, that is
$Ra\subseteq Ra^*$ since $Ra^{\dagger}=Ra^{\ast}$, and therefore,
$a^{\ast}R\subseteq aR$. Thus, $aR=a^{\ast}R$, i.e., $a$ is EP.

The equivalence between $(1)$ $\Leftrightarrow$ $(4)$ $\Leftrightarrow$ $(5)$ is similar to the proof of the equivalence between
$(1)$ $\Leftrightarrow$ $(2)$ $\Leftrightarrow$ $(3)$.
\end{proof}

\begin{example} \label{example1}
\emph{ The condition $[a^{\dagger}a,a^{\dagger}]=0$ in Proposition \ref{core-ep} does not imply that
$a$ is an EP element in general.
Let $R$, $A$ and $B$ be the same as Example \ref{example11}, then
$AB=\sum\limits_{i=2}^{\infty}e_{i,i}$, $BA=I$. So $A^{\dagger}=B$
and $[A^{\dagger}A,A^{\dagger}]=0$. But $A$ is not EP since $AB\neq BA.$}
\end{example}

%\begin{lemma}  \emph{\cite{PP}} \label{mp-sharpEPinvoa}
%Let $a\in R$. Then the following are equivalent:\\
%$(1)$  $a\in R^{\mathrm{EP}}$;\\
%$(2)$  $a\in R^{\dagger}$ and $aR=a^{\ast}R$;\\
%$(3)$  $a\in R^{\dagger}$ and $Ra=Ra^{\ast}$.
%\end{lemma}

%If $a\in R^{\dagger}\cap R^\#$. In \cite[Theorem 3.2]{RDD} Raki\'{c} et al. proved that $a\in R^{\mathrm{EP}}$ if and only of
%$[a^{\dagger}a,a]=0$. In the following theorem, we show that the hypothesis $a\in R^{\dagger}\cap R^\#$ can be wreaked
%as $a\in R^{\dagger}$ and $aR=a^{2}R$.

\begin{theorem} \label{ep-a=aa}
Let $a\in R^{\dagger}$. Then the following are equivalent:
\begin{itemize}
\item[{\rm (1)}] $a\in R^{\mathrm{EP}}$;
\item[{\rm (2)}] $aR=a^{2}R$ and $[a^{\dagger}a,a^{\dagger}]=0$;
\item[{\rm (3)}] $aR=a^{2}R$ and $[a^{\dagger}a,a]=0$;
\item[{\rm (4)}] $aR=a^{2}R$ and $aR\subseteq a^{\dagger}R$;
\item[{\rm (5)}] $aR=a^{2}R$ and $aR\subseteq a^{\ast}R$.
\end{itemize}
\end{theorem}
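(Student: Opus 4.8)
The plan is to handle the easy direction (1) $\Rightarrow$ (2)--(5) first, and then reduce all four of the converse implications to a \emph{single} scalar identity, after which EP drops out by one short computation. For (1) $\Rightarrow$ (2)--(5): if $a\in R^{\mathrm{EP}}$ then $a\in R^\dagger\cap R^\#$ with $a^\dagger=a^\#$ and $aa^\dagger=a^\dagger a$. Group invertibility gives $a=a^2a^\#\in a^2R$, hence $aR=a^2R$; since $[a,a^\dagger]=0$ by Lemma~\ref{core-epa}, the element $a^\dagger a=aa^\dagger$ commutes with both $a$ and $a^\dagger$, so the commutators in (2) and (3) vanish; and $aR=a^{\ast}R=a^\dagger R$ supplies the inclusions in (4) and (5).

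The key reduction is the observation that, for $a\in R^\dagger$, each of the four second hypotheses is equivalent to the one identity $a^\dagger a^2=a$, i.e. $(a^\dagger a)a=a$. For (3) this is immediate, since $[a^\dagger a,a]=a^\dagger a^2-aa^\dagger a=a^\dagger a^2-a$. Using the standard fact $a^\dagger R=a^{\ast}R$, conditions (4) and (5) both read $a\in a^\dagger R$; writing $a=a^\dagger w$ gives $a^\dagger a^2=(a^\dagger a)a=(a^\dagger a)(a^\dagger w)=(a^\dagger aa^\dagger)w=a^\dagger w=a$. Finally (2) says $a^\dagger aa^\dagger=a^\dagger a^\dagger a$, i.e. $a^\dagger=(a^\dagger)^2a$; left-multiplying by $a$ yields $aa^\dagger=(aa^\dagger)(a^\dagger a)$, applying the involution (both $aa^\dagger$ and $a^\dagger a$ are Hermitian) gives $(a^\dagger a)(aa^\dagger)=aa^\dagger$, and right-multiplying by $a$ produces $(a^\dagger a)a=a$, as desired.

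The heart of the argument is then the following: assume $aR=a^2R$ together with $a^\dagger a^2=a$. Since $a\in aR=a^2R$, write $a=a^2c$. Then
$$
a^\dagger a=a^\dagger a^2c=(a^\dagger a^2)c=ac,
$$
so that $(aa^\dagger)(a^\dagger a)=aa^\dagger(ac)=(aa^\dagger a)c=ac=a^\dagger a$. On the other hand, right-multiplying the identity $(a^\dagger a)a=a$ by $a^\dagger$ gives $(a^\dagger a)(aa^\dagger)=aa^\dagger$, and applying the involution yields $(aa^\dagger)(a^\dagger a)=aa^\dagger$. Comparing the two evaluations of the mixed product $(aa^\dagger)(a^\dagger a)$ forces $a^\dagger a=aa^\dagger$, whence $a\in R^{\mathrm{EP}}$ because for a Moore--Penrose invertible element commutativity of $aa^\dagger$ and $a^\dagger a$ characterizes EP (equivalently, apply Lemma~\ref{mp-sharpEPinvo}).

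The step I expect to be the main obstacle is psychological rather than technical: one must notice that $aR=a^2R$ on its own does \emph{not} yield group invertibility (a one-sided shift is regular with $aR=a^2R$ yet is not group invertible), so the two hypotheses must be genuinely combined and one cannot route through $a\in R^\#$ for free. The device that makes everything collapse is extracting the identity $a^\dagger a=ac$ from $a=a^2c$ and $a^\dagger a^2=a$; this converts the otherwise intractable product $(aa^\dagger)(a^\dagger a)$ into something computable from both sides, bypassing any explicit construction of the group inverse. The only external input needed is the routine equality $a^\dagger R=a^{\ast}R$, used solely to merge conditions (4) and (5) into the common form $a\in a^\dagger R$.
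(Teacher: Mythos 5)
Your proof is correct, and its skeleton matches the paper's: both arguments funnel each of the conditions (2)--(5) into the single identity $a^{\dagger}a^{2}=a$ (the paper derives it separately inside each implication; you state it up front as a uniform equivalence, which is tidier). The genuine difference is the endgame. The paper uses $aR=a^{2}R$ only through Lemma \ref{annihilator}, to obtain ${}^{\circ}(a^{2})\subseteq{}^{\circ}a$, and then closes by citing Theorem \ref{julio-b}, so its proof is short but rests on the three-equation machinery of Section 2, whose own proof constructs a group inverse $x^{2}a$. You instead use $aR=a^{2}R$ concretely: writing $a=a^{2}c$ and combining with $a^{\dagger}a^{2}=a$ gives $a^{\dagger}a=ac$, after which the two evaluations of $(aa^{\dagger})(a^{\dagger}a)$ force $aa^{\dagger}=a^{\dagger}a$. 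This is self-contained and elementary --- no group inverse is ever constructed and no annihilators appear --- and it makes transparent that the only role of $aR=a^{2}R$ is to supply the factor $c$ (indeed only the inclusion $a\in a^{2}R$ is used). Two small corrections to your final sentence: the fact you need is that $a\in R^{\dagger}$ together with the \emph{equality} $aa^{\dagger}=a^{\dagger}a$ implies EP, which is Lemma \ref{core-epa}(2) (the condition $[a,a^{\dagger}]=0$), not Lemma \ref{mp-sharpEPinvo}, whose conditions are range equalities and would need an extra step; and the phrase ``commutativity of $aa^{\dagger}$ and $a^{\dagger}a$ characterizes EP'' is literally false, since $[aa^{\dagger},a^{\dagger}a]=0$ is only the bi-EP condition, which is strictly weaker than EP without group invertibility (Theorem \ref{g-bi-ep-thm}; the shift in Example \ref{example11} is bi-EP but not EP). What you actually derived and used is the equality, so the argument stands as written.
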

\begin{proof}
$(1)\Rightarrow$ $(2)$--$(5)$:
For a Moore-Penrose invertible element $a\in R$, we have $a\in R^{\rm EP}$ if and only if
$aa^{\dagger}=a^{\dagger}a$. Hence$(2)$--$(5)$ hold.

$(2)\Rightarrow(1)$: Observe that $[a^{\dagger}a,a^{\dagger}]=0$ implies $a^{\dagger}=(a^{\dagger})^{2}a$.
Thus $Ra^{\ast}\subseteq Ra$ (since $Ra^{\dagger}=Ra^{\ast}$). That is $a^{\ast}=ra$ for some $r\in R.$
We deduce that $a^{\ast}=ra=raa^{\dagger}a=a^{\ast}a^{\dagger}a$, applying involution on that last equality we obtain $a=a^{\dagger}a^{2}$.
Therefore $a\in R^{\mathrm{EP}}$ by Lemma \ref{annihilator} and Theorem \ref{julio-b}.
%Theorem \ref{ep-core-baohan}.

$(3)\Rightarrow(1)$: It is clear that $[a^{\dagger}a,a]=0$ implies $a=a^{\dagger}a^{2}$. Thus $a\in R^{\mathrm{EP}}$ by by Lemma \ref{annihilator} and Theorem \ref{julio-b}.
%Theorem~\ref{ep-core-baohan}
%and the proof of $(2)\Rightarrow(1)$.

$(5)\Rightarrow(1)$: As $aR\subseteq a^{\ast}R$ is equivalent to $Ra^{\ast}\subseteq Ra$, we get $a\in R^{\mathrm{EP}}$ by the proof of $(2)\Rightarrow(1)$.

$(4)\Leftrightarrow(5)$: It is clear by $a^{\ast}R=a^{\dagger}R.$
\end{proof}

Similarly, we have the following theorem.
\begin{theorem} \label{ep-a=aa-2}
Let $a\in R^{\dagger}$. Then the following are equivalent:
\begin{itemize}
\item[{\rm (1)}] $a\in R^{\mathrm{EP}}$;
\item[{\rm (2)}] $Ra=Ra^{2}$ and $[aa^{\dagger},a^{\dagger}]=0$;
\item[{\rm (3)}] $Ra=Ra^{2}$ and $[aa^{\dagger},a]=0$;
\item[{\rm (4)}] $Ra=Ra^{2}$ and $Ra\subseteq Ra^{\dagger}$;
\item[{\rm (5)}] $Ra=Ra^{2}$ and $Ra\subseteq Ra^{\ast}$.
\end{itemize}
\end{theorem}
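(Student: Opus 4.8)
The plan is to exploit the fact that Theorem \ref{ep-a=aa-2} is the exact left--right mirror of Theorem \ref{ep-a=aa}: every right-ideal/right-annihilator condition there turns into a left-ideal/left-annihilator condition here, and the factor $a^{\dagger}a$ is everywhere replaced by $aa^{\dagger}$. Accordingly I would dualize the argument of Theorem \ref{ep-a=aa}, this time leaning on part (3) of Theorem \ref{julio-b} (the version phrased with $(a^{2})^{\circ}\subseteq a^{\circ}$, $a^{2}x=a$ and $(ax)^{\ast}=ax$) in place of its part (2). For $(1)\Rightarrow(2)$--$(5)$ I would use that, for $a\in R^{\dagger}$, being EP is equivalent to $aa^{\dagger}=a^{\dagger}a$; this makes every displayed commutator vanish and every inclusion trivial, while $Ra=Ra^{2}$ follows from $a=a^{2}a^{\#}$, so that $Ra\subseteq Ra^{2}\subseteq Ra$.

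The core of the work is $(2)\Rightarrow(1)$. Here $[aa^{\dagger},a^{\dagger}]=0$ gives $a^{\dagger}=a(a^{\dagger})^{2}\in aR$, hence $a^{\ast}R=a^{\dagger}R\subseteq aR$; writing $a^{\ast}=ar$ and inserting $a=aa^{\dagger}a$ yields $a^{\ast}=aa^{\dagger}a^{\ast}$. Applying the involution and using $(aa^{\dagger})^{\ast}=aa^{\dagger}$ then produces $a=a^{2}a^{\dagger}$. Combined with $Ra=Ra^{2}$, which forces $(a^{2})^{\circ}\subseteq a^{\circ}$ by Lemma \ref{annihilator}(2), I can take $x=a^{\dagger}$ in Theorem \ref{julio-b}(3) (noting $a^{2}a^{\dagger}=a$ and $(aa^{\dagger})^{\ast}=aa^{\dagger}$) and conclude $a\in R^{\mathrm{EP}}$. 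For $(3)\Rightarrow(1)$ the commutator $[aa^{\dagger},a]=0$ delivers $a=a^{2}a^{\dagger}$ at once, so the same appeal to Lemma \ref{annihilator}(2) and Theorem \ref{julio-b}(3) finishes it.

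Finally, $(4)\Leftrightarrow(5)$ is immediate from $Ra^{\dagger}=Ra^{\ast}$, and for $(5)\Rightarrow(1)$ I would note that $Ra\subseteq Ra^{\ast}$ is equivalent, via the involution on one-sided ideals, to $a^{\ast}R\subseteq aR$, which reduces it to the situation already treated in $(2)\Rightarrow(1)$. The one place that demands care is the involution bookkeeping: keeping the $a\leftrightarrow a^{\ast}$ and the left $\leftrightarrow$ right swaps consistent throughout, and remembering to invoke part (3) rather than part (2) of Theorem \ref{julio-b}. I expect no genuine obstacle beyond this symmetry discipline, since every analytic step already appears (in mirrored form) in the proof of Theorem \ref{ep-a=aa}.
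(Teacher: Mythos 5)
Your proposal is correct and is exactly the intended argument: the paper gives no separate proof of Theorem \ref{ep-a=aa-2}, stating only that it follows ``similarly'' from Theorem \ref{ep-a=aa}, and your write-up carries out precisely that left--right dualization (commutators with $aa^{\dagger}$ in place of $a^{\dagger}a$, Lemma \ref{annihilator}(2) in place of (1), and Theorem \ref{julio-b}(3) in place of (2)). All the individual steps you spell out, in particular deriving $a=a^{2}a^{\dagger}$ from $a^{\ast}=aa^{\dagger}a^{\ast}$ via the involution and $(aa^{\dagger})^{\ast}=aa^{\dagger}$, check out.
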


\centerline {\bf ACKNOWLEDGMENTS} This research is supported by the National Natural Science Foundation of China (No. 11201063 and No. 11371089), the Specialized Research Fund for the Doctoral Program of Higher Education (No. 20120092110020); the Jiangsu Planned Projects for Postdoctoral Research Funds (No. 1501048B); the Natural Science Foundation of Jiangsu Province (No. BK20141327).

\end{document}